\newcommand{\BB}{\mathbb{B}}
\newcommand{\CC}{\mathbb{C}}
\newcommand{\DD}{\mathbb{D}}
\newcommand{\EE}{\mathbb{E}}
\newcommand{\FF}{\mathbb{F}}
\newcommand{\II}{\mathbb{I}}
\newcommand{\JJ}{\mathbb{J}}
\newcommand{\KK}{\mathbb{K}}
\newcommand{\MM}{\mathbb{M}}
\newcommand{\NN}{\mathbb{N}}
\newcommand{\PP}{\mathbb{P}}
\newcommand{\QQ}{\mathbb{Q}}
\newcommand{\RR}{\mathbb{R}}
\newcommand{\SSS}{\mathbb{S}}
\newcommand{\XX}{\mathbb{X}}
\newcommand{\aA}{\mathcal{A}}
\newcommand{\bB}{\mathcal{B}}
\newcommand{\cC}{\mathcal{C}}
\newcommand{\eE}{\mathcal{E}}
\newcommand{\fF}{\mathcal{F}}
\newcommand{\gG}{\mathcal{G}}
\newcommand{\lL}{\mathcal{L}}
\newcommand{\pP}{\mathcal{P}}
\newcommand{\rR}{\mathcal{R}}
\newcommand{\sS}{\mathcal{S}}
\newcommand{\uU}{\mathcal{U}}
\newcommand{\xX}{\mathcal{X}}
\newcommand{\yY}{\mathcal{Y}}
\newcommand{\vt}{\vartheta}
\newcommand{\e}{\varepsilon}
\newcommand{\no}{\noindent}
\newcommand{\ra}{\rightarrow}
\newcommand{\lra}{\longrightarrow}
\newcommand{\non}{\nonumber}
\newcommand{\lqq}{\leqslant}
\newcommand{\gqq}{\geqslant}
\newtheorem{thm}{Theorem}[section]
\theoremstyle{plain} 
\newtheorem{theorem}{Theorem}[section]
\newtheorem{corollary}{Corollary}[section] 
\newtheorem{lemma}{Lemma}[section] 
\newtheorem{proposition}{Proposition}[section] 
\theoremstyle{definition} 
\newtheorem{definition}{Definition}[section]
\newtheorem{condition}{Hypothesis}
\theoremstyle{remark} 
\newtheorem{remark}{Remark}[section]
\theoremstyle{definition}
\newtheorem{rem}[thm]{Remark}
\DeclareMathSymbol{\ophi}{\mathalpha}{letters}{"1E}
\renewcommand{\phi}{\varphi}
\newcommand{\be}{\begin{equation}}
\newcommand{\ee}{\end{equation}}
\newcommand{\ben}{\begin{equation*}}
\newcommand{\een}{\end{equation*}}
\newcommand{\ba}{\begin{equation}\begin{aligned}}
\newcommand{\ea}{\end{aligned}\end{equation}}
\newfont{\cyrfnt}{wncyr10}
\def\J3{\cyrfnt{\rm \u{\cyrfnt I}}}
\def\j3{\cyrfnt{\rm \u{\cyrfnt i}}}
\definecolor{DarkGreen}{rgb}{0.1,0.7,0.3}   
\definecolor{DarkGreen}{rgb}{0.1,0.7,0.3}   
\begin{document}
\title{Large deviations for L\'{e}vy diffusions in the small noise regime 
}

\date{\null}

\author{
Pedro Catuogno \footnote{Departamento de Matem\'{a}tica Universidade Estadual de Campinas 13081-970 Campinas SP-Brazil; 
pedrojc@unicamp.br
} \quad \quad \quad \quad \quad 
Andr\'e de Oliveira Gomes \footnote{Departamento de Matem\'{a}tica Universidade Estadual de Campinas 13081-970 Campinas SP-Brazil; ENSTA-ParisTech Applied Mathematics Department,
828 Boulevard des Maréchaux, 91120 Palaiseau, France; andre.deoliveiragomes@cardis.io}  \hspace{2cm}
}

\maketitle

 \begin{abstract} 
This article concerns the large deviations regime and the consequent solution of the Kramers problem for a two-time scale stochastic system driven by a common jump noise signal perturbed in small intensity $\varepsilon>0$ and with accelerated jumps by intensity $\frac{1}{\varepsilon}$.  We establish\textit{ Freidlin-Wentzell estimates} for the slow process of the multiscale system in the small noise limit $\varepsilon \ra 0$ using the weak convergence approach to large deviations theory. The core of our proof is the reduction of the large deviations principle to the establishment of a stochastic averaging principle for auxiliary controlled processes. As consequence we solve the first exit time/ exit locus problem from a bounded domain containing the stable state of the averaged dynamics for the family of the slow processes in the small noise limit.   
 \end{abstract} 

\noindent \textbf{Keywords:} Large deviations principle; multi-scale stochastic differential equations with jumps;
stochastic averaging principle; weak convergence approach;
\noindent \textbf{2010 Mathematical Subject Classification: } 60H10; 60F10; 60J75.

\section{Introduction}

\no A large deviations principle (LDP for short) is a refinement of the Law of Large Numbers in the sense it encodes a much finer asymptotic analysis concerning the exponential decay of probabilities of unlikely events with respect to a certain parameter and exhibiting the rate of decay in terms of a deterministic functional that is denoted commonly in the literature as good rate function. Historically large deviations theory (LDT) made its first appearence in 1877 \cite{Boltzmann, Ellis99} in the context of Boltzmann's studies of the second law of thermodynamics. Other landmark on the field was given by the seminal result by Cr\'{a}mer in the context of risk probabilities and rapidly this mathematical branch evolved with a diversity of applications and interactions with other fields especially after the grounbreaking contributions from Varadhan \cite{Rama, Varadhan66, DV75, Varadhan, Varadhan08}. As excellents and exemplary monographs on the field we refer \cite{DZ98,  DeuStr89, Hollander}. With physical examples in mind large deviations principles can refer to extreme events such as systems that exchange from one equilibrium state to another and that occur with small probability. We refer to \cite{FW98, Ellis85, Olivieri Vares} where applications of large deviations regimes to statistical mechanics and to the study of metastable systems are respectively developed.

\no In this work we import several techniques from LDT for Poisson random measures in order to understand asymptotically, as the source of noise vanishes, qualitative features concerning  the following multiscale stochastic system driven by a pure jump noise signal. Fixed $T>0$ and $\varepsilon>0$ let
\begin{align} \label{eq: multiscale-introduction}
\begin{cases}
d X^\varepsilon_t &= a(X^\varepsilon_t, Y^\varepsilon_t)dt + \varepsilon  \displaystyle \int_{\RR^d \backslash \{0\}} c(X^\varepsilon_{t-},z) d \tilde N^{\frac{1}{\varepsilon}}(ds); \\
d Y^\varepsilon_t &= \displaystyle \frac{1}{\varepsilon} f(X^\varepsilon_t, Y^\varepsilon_t) ds + \int_{\RR^d \backslash \{0\}} h(X^\varepsilon_{t-}, Y^\varepsilon_{t-},z) d \tilde N^{\frac{1}{\varepsilon}}(dz); \quad t \in [0,T]; \\
X^\varepsilon_0 & = x \in \RR^d; \quad Y^\varepsilon_0=y \in \RR^k.
\end{cases}
\end{align}

\no For every $\varepsilon>0$ the stochastic process $(X^\varepsilon_t, Y^\varepsilon_t)_{t \in [0,T]}$ takes values in $\RR^n := \RR^d \times \RR^k$ and the driving signal $\tilde N^{\frac{1}{\varepsilon}}$ is a compensated Poisson random measure with intensity $\frac{1}{\varepsilon} \nu \otimes ds$, where $ds$ stands for the Lebesgue measure on the real line and $\nu$ is a L\'{e}vy measure on $(\RR^d \backslash \{0\}, \bB(\RR^d \backslash \{0\}))$. We consider $\nu$ possibly with infinite total mass but satisfying an exponential integrability assumption that reads as the big jumps of the underlying L\'{e}vy process that drives (\ref{eq: multiscale-introduction}) having exponential moments of order 2. The assumptions on the coefficients of (\ref{eq: multiscale-introduction}) and on the measure $\nu$ will be precised with full detail in the following section.

\no Multi-scale systems as (\ref{eq: multiscale-introduction}) are nowadays very trendy in mathematical and physical disciplines since they succesfully capture phenomena that exhibit different levels of heterogeneity/homogeneity categorized by a scaling parameter. Typical examples are multi-factor stochastic volatility models in Finance \cite{Papanicolaou1, Papanicolaou2} and dynamics of proxy-data in Climatology \cite{Kifer}. Usually such systems are highly complex and difficult to simulate \cite{Liu and Vanden-Eijden}. Due to that complexity and the goal of approximating the dynamics of such systems by much simpler ones the idea of stochastic averaging was introduced by Khasminkki in \cite{Khasminskii} as follows. Under strong dissipativity assumptions on the coefficients of the fast variable $(Y^\varepsilon_t)_{t \in [0,T]}$ of (\ref{eq: multiscale-introduction}) that ensure the existence of a unique invariant measure $\mu^x$ for the fast variable process with frozen variable $x$ and such that a certain mixing ergodic property holds for the averaged mixing coefficient
\begin{align} \label{eq: mixing coefficient-int}
\bar a(x)&:= \int_{\RR^k} a(x,y) \mu^x(dy),
\end{align} 
the strong averaging principle says that,  for every $T>0$ and $\delta>0$,  the following convergence holds
\begin{align} \label{eq: strong averaging principle-int}
\displaystyle \lim_{\varepsilon \ra 0} \PP \Big ( \displaystyle \sup_{t \in [0,T]} |X^\varepsilon_t - \bar X^0_t| > \delta \Big )=0,
\end{align}
where $\bar X^0$ is the unique solution of the ordinary differential equation
\begin{align} \label{eq: int- averaged deterministic system}
\begin{cases}
\frac{d}{dt} \bar X^0_t &= \bar a(\bar X^0_t), \quad t \in [0,T]; \\
\bar X^0_0&=x.
\end{cases}
\end{align}

\no The Khasminkii's technique was introduced in \cite{Khasminskii} and later implemented by Mark Freidlin \cite{Freidlin78} and Veterennikov in \cite{Vetennikov} in different contexts. As examples of works on stochastic averaging principles we mention \cite{Cerrai, Cerrai2, Cerrai4} concerning multi-scale stochastic partial differential equations (SPDEs for short) driven by Gaussian signals and \cite{Givon, Liu, Xu-Miao-Liu, Xu} for multi-scale systems driven by jump diffusions.

\no Although the averaging principle (\ref{eq: strong averaging principle-int}) gives an approximation result for small $\varepsilon>0$ of the slow variable process $(X^\varepsilon_t)_{t \in [0,T]}$ by the averaged dynamics of $\bar X^0$ nothing is said on the rate of convergence. A large deviations principle provide sharper estimates within the identification of the rate of convergence for (\ref{eq: strong averaging principle-int}) in an exponentially small scale $\varepsilon \ra 0$ in terms of the good rate function. We refer to \cite{BDG18, Duan12, Kumar17, Vetennikov2} as examples of stochastic averaging principles under the large deviations regime.

\no The first goal of this work is to derive the large deviations principle (LDP for short) for $(X^\varepsilon)_{\varepsilon>0}$ given by (\ref{eq: multiscale-introduction}). We show that $(X^\varepsilon)_{\varepsilon>0}$ satisfies a large deviations principle in $\DD([0,T];\RR^d)$ the space of c\`{a}dl\'{a}g functions endowed with the Skorokhod topology (Section 12 in \cite{Billingsley}) and the good rate function $\JJ: \DD([0,T]; \RR^d) \longrightarrow [0, \infty]$ given by
\begin{align*}
\II(\psi) := \displaystyle \inf_{g \in \mathbb{S}} \int_0^T \int_{\RR^d \backslash \{0\}} (  g(s,z) \ln g(s,z) - g(s,z) + 1) \nu(dz) ds
\end{align*}
where 
\begin{align*}
\mathbb{S}:= \bigcup_{M>0} S^M := \bigcup_{M>0} \Big  \{ &g: [0,T] \times \RR^d \backslash \{0\} \longrightarrow [0,\infty) ~ |~ \int_0^T \int_{\RR^d \backslash \{0\}} ( g(s,z) \ln g(s,z) - g(s,z) + 1) \nu(dz) ds \leq M \Big \}. 
\end{align*}
and  for every $x \in \RR^d$ and $g \in \mathbb{S}$ the function $\psi \in \DD([0,T]; \RR^d)$ solves uniquely the skeleton equation
\begin{align} \label{eq: the skeleton eq-int}
\psi(t)=x+  \int_0^t \bar a (\psi(s))ds + \int_0^t \int_{\RR^d \backslash \{0\}} c(\bar X^0_s,z) (g(s,z)-1) \nu(dz)ds, \quad t \in [0,T].
\end{align}
\no This means that $\JJ$ has compact sublevel sets in the Skorokhod topology $\DD([0,T];\RR^d)$ and that for every open set $G \in \bB(\DD([0,T]; \RR^d))$ and closed set $F \in \bB(\DD([0,T]; \RR^d))$ the following holds
\begin{align} \label{eq: int-LDP}
\displaystyle \liminf_{\varepsilon \ra 0} \varepsilon \ln \PP(X^\varepsilon \in F) & \geq - \displaystyle \inf_{\psi \in F} \JJ(\psi) \quad \text{and} \nonumber \\
\displaystyle \limsup_{\varepsilon \ra 0} \varepsilon \ln \PP(X^\varepsilon \in G)  & \leq - \displaystyle \inf_{\psi \in G} \JJ(\psi).
\end{align}
\no The second goal of this work is, under stricter assumptions on the measure $\nu$, to use the LDP (\ref{eq: int-LDP}) in order to solve the asymptotic behavior $\varepsilon \ra 0$ of the law and the expectation of the first exit time and location (known as Kramers problem in the literature)
\begin{align} \label{eq: int-first exit time}
\sigma^\varepsilon(x):= \inf \{ t \geq 0 ~|~ X^{\varepsilon,x}_t \notin D \} \quad \text{ and } X^{\varepsilon,x}_{\sigma^\varepsilon(x)} \quad \text{respectively},
\end{align} 
where, for every $\varepsilon>0$, we stress the dependence on the initial condition $x \in D$ that lives on a neighborhood $D \subset \RR^d$ of $0$ which we assume to be a stable state of the averaged dynamical system (\ref{eq: int- averaged deterministic system}). The Kramers problem arose firstly in the context of chemical reaction kinetics \cite{Arrhenius-89, Eyring-35, Kramers-40}. Nowadays this is a classical problem in Probability theory and provides crucial insights in many areas ranging from statistical mechanics, statistics, risk analysis, population dynamics, fuid dynamics to neurology. It was also a driving force in the development of large deviations theory in the small noise limit for Gaussian dynamical systems in many different settings and effects derived such as metastability and stochastic resonance. Classical texts with detailed exposition include \cite{Berglund-13, BerglundG-04, BerGen-10, BarBovMel10, BovEckGayKle02, Bovier1, CerRoeck-04, Freidlin00, Galves Vares, Siegert} and examples of the recent active research on the field include \cite{19-3, Gamboa, 19-4, Locherbach, 19-1}. The establishment of large deviations principles in the small noise limit and its use to solve the Kramers problem and the metastable behavior of the perturbed dynamical systems by Gaussian signals is nowadays commonly designated by \textit{Freidlin-Wentzell theory}.

\no Nevertheless,  the literature on large deviations theory and the application to the study of the Kramers problem for Markovian systems with jumps is more fragmented and recent. One reason for the ausence of a \textit{Freidlin-Wentzell theory} for dynamical systems perturbed in low intensity by jump signals is the variety of L\'{e}vy processes, including processes with heavy tails and the resulting lack of moments. LDPs for certain classes of L\'{e}vy noises and Poisson random measures are given in \cite{Acosta, Blanchet et al17, Borovkov, Florens Pham, Godovanchuk-82, Leonard, Lynch87, Puhalskii}. The first exit time problem for small jump processes starts with the seminal work \cite{Godovanchuk-79} for $\alpha$-stable processes and for more general heavy-tailed processes by \cite{Debussche et al., HoePav-13, Imk Pav06, Imk Pav08, Pav11}. The above mentioned works do not follow though a large deviations regime since the intensity of the jumps is not rescaled in a inverse way by $\frac{1}{\varepsilon}$ from the signal strength $\varepsilon>0$ affecting the size of the jumps.  It is this tuning between the jump size $\varepsilon>0$ and the intensity measure of the Poisson random measure $N^{\frac{1}{\varepsilon}}$ that permits to retrieve the large deviations regime for dynamical systems perturbed by small jump noises,  as it is exampled in the Appendix for a simple but sufficiently rich toymodel with values in $\RR^d$ given for any $\varepsilon>0$ and $t \in [0,T]$, $T>0$ by $L^\varepsilon_t:= \varepsilon \int_0^t \int_{\RR^d \backslash \{0\}} z N^{\frac{1}{\varepsilon}}$,  where the underlying Poisson random measure has intensity given by $\frac{1}{\varepsilon} \nu \otimes ds$ for the explicit L\'{e}vy measure $\nu(dz)= \frac{1}{|z|^{\beta+d}} e^{- |z|^\alpha}dz \quad \alpha >1, \beta \in [0,1).$

\no Our method to prove the large deviations principle for the family of slow variables $(X^\varepsilon)_{\varepsilon>0}$ in (\ref{eq: multiscale-introduction}) relies on the weak convergence approach of Dupuis, Ellis, Budhiraja and collaborators, specifically on the works \cite{BDM11, BCD13}. The weak convergence approach to LDT builds up on the equivalence in Polish spaces between the definition of LDP and the variational principle known in the literature as the \textit{Laplace-Varadhan principle}.  Initially Fleming applied in \cite{Fleming1, Fleming3} methods of stochastic control to LDT. The control-theoretical approach was carried out later in order to derive variational formulas for Laplace functionals of Markov processes in different contexts (cf. \cite{Dupuis Ellis}). In \cite{BD00} the authors derive a sufficient abstract condition for Brownian diffusions and later for jump-diffusions in \cite{BDM11, BCD13} through the establishment of variational formulas for Laplace functionals of Markov provesses. We refer the reader to the recent book \cite{Budhiraja book} for an out-to-date introduction to the subject. 

\paragraph*{Comments on the results.} 

\no The proof of the main result of this work follows from an abstract sufficient condition for large deviations principles stated as Theorem 4.2 in \cite{BDM11}.  In our case the application of this abstract condition is not straightforward due to the coupling between the slow variable $X^\varepsilon$ and the fast variable $Y^\varepsilon$ in (\ref{eq: multiscale-introduction}) with different scaling orders in $\varepsilon \ra 0$.  

\no More precisely the difficult part is to prove directly the following. Fix $M>0$. For every $\varepsilon>0$ let $\varphi^\varepsilon \in S^M$ random such that $\varphi^\varepsilon \Rightarrow \varphi$ in law as $\varepsilon \ra 0$. For every $\varepsilon>0$, $T>0$, $x \in \RR^d$, $y \in \RR^k$ and $t \in [0,T]$ let $(\xX^\varepsilon_t)_{t \in [0,T]}$ and $(\yY^\varepsilon_t)_{t \in [0,T]}$ be the controlled processes given by the following controlled SDEs:
\begin{align} \label{eq: Khasminki- controlled X variable}
 \xX^\varepsilon_t &= x + \displaystyle \int_0^t \Big ( a(\xX^\varepsilon_s, \yY^\varepsilon_s) + \int_{\RR^d \backslash \{0\}} c(\xX^\varepsilon_s,z) (\varphi^\varepsilon(s,z)-1) \nu(dz)\Big ) ds + \varepsilon \int_0^t \int_{\RR^d \backslash \{0\}} c(\xX^\varepsilon_{s-},z) \tilde N^{\frac{1}{\varepsilon} \varphi^\varepsilon}(ds,dz)
\end{align}
and
\begin{align} \label{eq: Khasminkii- controlled Y variable}
\yY^\varepsilon_t &=y + \displaystyle \frac{1}{\varepsilon}\int_0^t \Big (f(\xX^\varepsilon_s, \yY^\varepsilon_s) + \int_{\RR^d \backslash \{0\}} h(\xX^\varepsilon_s, \yY^\varepsilon_s,z) (\varphi^\varepsilon(s,z)-1) \nu(dz) \Big ) ds + \int_0^t \int_{\RR^d \backslash \{0\}} h(\xX^\varepsilon_{s-}, \yY^\varepsilon_{s-},z) \tilde N^{\frac{1}{\varepsilon} \varphi^\varepsilon}(ds,dz)
\end{align}
where,  for any $\varepsilon>0$,  the random measure $\tilde N^{\frac{1}{\varepsilon} \varphi^\varepsilon}$ is a controlled random measure that under a change of probability measure has the same law of $\tilde N^{\frac{1}{\varepsilon}}$ under the original probability measure. This will be rigorously stated in Section  \ref{section: proof}. \\

\no Under the following setting, the main task in the derivation of the LDP is to prove that $\xX^\varepsilon \Rightarrow \bar \xX$ where $\bar \xX$ solves (\ref{eq: the skeleton eq-int})uniquely in $C([0,T]; \RR^d)$ for the control $\varphi \in S^M$. In order to prove that convergence in law we show that the family $(\xX^\varepsilon)_{\varepsilon>0}$ satisfies a tightened averaging principle,  i.e.  for every $\delta>0$ the following holds
\begin{align} \label{eq: the controlled averaging principle int}
\displaystyle \limsup_{\varepsilon \ra 0} \PP \Big ( \displaystyle \sup_{t \in [0,T]} |\xX^\varepsilon(t) - \bar \xX^\varepsilon(t)|> \delta  \Big )=0,
\end{align}
where $(\bar \xX^\varepsilon)_{\varepsilon>0}$ is defined for every $\varepsilon>0$ and $t \in [0,T]$ by
\begin{align} \label{eq: Khasminkii- controlled averaged variable}
\bar \xX^\varepsilon_t &= x+ \displaystyle\int_0^t \Big ( \bar a(\bar \xX^\varepsilon_s)+ \int_{\RR^d \backslash \{0\}} c(\bar \xX^\varepsilon_s,z) (\varphi^\varepsilon(s,z)-1) \nu(dz) \Big ) ds  + \varepsilon \int_0^t \int_{\RR^d \backslash \{0\}} c(\bar \xX^\varepsilon_{s-},z) \tilde N^{\frac{1}{\varepsilon} \varphi^\varepsilon}(ds,dz).
\end{align}
This will imply by Slutzky's theorem (Theorem 4.1 in \cite{Billingsley}) that $(\xX^\varepsilon)_{\varepsilon>0}$ has the same weak limit of $(\bar \xX^\varepsilon)_{\varepsilon>0}$. And therefore we are conducted to the (easier) task to show that $\bar \xX^\varepsilon \Rightarrow \bar \xX$ (since the dynamics of (\ref{eq: Khasminkii- controlled averaged variable}) is decoupled from the dynamics of the fast variable of the original stochastic system (\ref{eq: multiscale-introduction})). \\

\no The proof of the tightened controlled averaging principle (\ref{eq: the controlled averaging principle int}) is inspired on the classical Khasminkii's technique introduced in \cite{Khasminskii}. In a nutshell the procedure relies on a discretization of the time interval $[0,T]$ in a finite number of intervals with same length $\Delta(\varepsilon) \ra 0$ as $\varepsilon \ra 0$ satisfying some growth conditions that will interplay with the ergodic properties of the averaged dynamics via the construction of auxiliary processes $(\hat \xX^\varepsilon)_{\varepsilon>0}$ and $(\hat \yY^\varepsilon)_{\varepsilon>0}$. 

\no Our main result shows in particular that $(X^\varepsilon)_{\varepsilon>0}$ obeys the same LDP of $(\bar X^\varepsilon)_{\varepsilon>0}$ where we define the averaged process $\bar X^\varepsilon$ for every $\varepsilon>0$ and $t \in [0,T]$ by
\begin{align} \label{eq: averaged processes}
\bar X^\varepsilon_t= x + \int_0^t \bar a( \bar X^\varepsilon_s) ds + \varepsilon \int_0^t \int_{\RR^d \backslash \{0\}} c(\bar X^\varepsilon_{s-},z) \tilde N^{\frac{1}{\varepsilon}}(ds,dz).
\end{align}
\no One could firstly derive the LDP for $(\bar X^\varepsilon)_{\varepsilon>0}$ and secondly show that the families $(X^\varepsilon)_{\varepsilon>0}$ and $(\bar X^\varepsilon)_{\varepsilon>0}$ are exponentially equivalent, i.e. for every $\delta>0$ we have
\begin{align} \label{eq: int-exponential negligibility}
\displaystyle \lim_{\varepsilon \ra 0} \varepsilon \ln \PP \Big ( \sup_{0 \leq t \leq T} | X^\varepsilon_t - \bar X^\varepsilon_t| > \delta\Big )=-\infty.
\end{align}
This would imply that $(X^\varepsilon)_{\varepsilon>0}$ obeys the same LDP of $(\bar X^\varepsilon)_{\varepsilon>0}$ as $\varepsilon \ra 0$. However verifying the exponential equivalence of those families is in general hard. The reasoning employed in this work illustrates the robustness of the weak convergence approach providing a way of reducing the proof of the LDP to the verification of properties concerning continuity and tightness of certain auxiliary processes associated to $(X^\varepsilon)_{\varepsilon>0}$. Such reduction of complexity in such endeavour can be appreciated immediately by the contrast between the zero scale of the limit (\ref{eq: strong averaging principle-int}) with the exponential negligibility demanded in the establishment of the limit (\ref{eq: int-exponential negligibility}).

\no Due to the exponential equivalence of $(X^\varepsilon)_{\varepsilon>0}$ and $(\bar X^\varepsilon)_{\varepsilon>0}$ the LDP of $(X^\varepsilon)_{\varepsilon>0}$ is written with the good rate function of the LDP of $(\bar X^\varepsilon)_{\varepsilon>0}$ which has important implications. It follows that the LDP of $(X^{\varepsilon,x})_{\varepsilon>0}$ is given as an optimization problem under the averaged dynamics solved by continuous controlled paths with a nonlocal component due to the pure jump noise. Therefore in the solution of the Kramers problem for $(X^\varepsilon)_{\varepsilon>0}$ we are therefore allowed to write the potential height for the description of the law and the expected first exit time of $(X^\varepsilon)_{\varepsilon>0}$ from the domain $D \subset \RR^d$ containing the stable state of the averaged dynamics (\ref{eq: int-first exit time}). 

\no The solution of the Kramers problem for $(X^\varepsilon)_{\varepsilon>0}$ follows as this point analogously to what was done by the authors in the work \cite{Hogele AO}. Analogously to the classical Freidlin-Wentzell theory we solve the Kramers problem with a  pseudo-potential given in terms of the good rate function of the LDP.  In the Brownian 
case, under very mild assumptions on the coefficients of the SDE the respective controlled dynamics exhibits continuity properties that are crucial in the characterization of the first exit times. This differs  strongly from the pure jump case.  In this context, obtaining a closed form for the rate function is a hard task since the class of minimizers are scalar functions that 
represent shifts of the compensator of $\varepsilon \tilde N^{\frac{1}{\varepsilon}}$ on the 
nonlocal (possibly singular) component of the underlying controlled dynamics. This is an additional  
difficulty in the characterization of the first exit time in terms of the pseudo-potential. However, 
in the case of finite and symmetric jump measures we can solve the first exit time problem with 
the help of explicit formulas that we obtain for the controls. In other words, on an abstract 
level the physical intuition 
remains intact; however, since the control is given as a density w.r.t. 
the L\'evy measure $\nu$,
it is often hard to calculate the energy minimizing paths. 

\no Analogously to the Brownian case \cite{DZ98, FW98} we construct for the lower bound of the first exit time a (modified) Markov chain approximation that takes into account the topological particularities of 
the Skorokhod space on which we have the LDP. Here the 
symmetry of the measure $\nu$ plays an essential role since it enables us to derive the lower bound of the 
probabilities of exit in terms of probabilities of excursions from neighborhoods of the stable 
state of the deterministic dynamical system. Otherwise we would need to control the trajectories 
of perturbations from the deterministic dynamical system including the non-vanishing compensator 
of $\varepsilon N^{\frac{1}{\varepsilon}}$.

\no The proof of (\ref{thm: location exit}) in Theorem \ref{thm: exit time} follows the lines of the proof for the Brownian case but rests on several auxiliary results which derivation take into account the specific large deviations principle for $(X^\varepsilon)_{\varepsilon>0}$, particularities of the Skorokhod topology, the reduction of the dynamics of $(X^\varepsilon)_{\varepsilon>0}$ from the bounded domain $D$  to estimates on excursions from certain balls contained in the $D$ and  the exponential equivalence between $(X^\varepsilon)_{\varepsilon>0}$ and $(\bar X^\varepsilon)_{\varepsilon>0}$ that permits to describe the potential for the solution of the Kramers problem as the potential associated to the averaged (and simplified) dynamics of $(\bar X^\varepsilon)_{\varepsilon>0}$.

\paragraph*{Notation.} The arrow $\Rightarrow$ means convergence in distribution. Throughout the article we use when convenient the shorthand notation $A (\varepsilon)\lesssim_\varepsilon B(\varepsilon)$ to mean  that there exist a constant $ c>0$ independent of $\varepsilon>0$ and $\varepsilon_0>0$ such that $A(\varepsilon) \leq cB(\varepsilon)$ for every $\varepsilon< \varepsilon_0$. We write $A(\varepsilon) \simeq_\varepsilon B(\varepsilon)$ as $\varepsilon \ra 0$ to mean that $A(\varepsilon) \lesssim_\varepsilon B(\varepsilon)$ and $B(\varepsilon) \lesssim_\varepsilon A(\varepsilon)$ as $\varepsilon \ra 0$.

\section{The multi-scale system and statement of the main results}
\subsection{Notation and the probabilistic setting} \label{subsection: prob setting}
\no Let $T>0$. Let $\MM$ be the space of locally finite measures defined on $([0,T] \times \RR^d \backslash \{0\}, \bB([0,T] \times \RR^d \backslash \{0\}))$ and let us fix $\nu \in \MM$. Let $\PP$ be the unique probability measure defined on $(\MM, \bB(\MM))$ such that the canonical map $N$ turns out to be a Poisson random measure with intensity $\nu \otimes ds$ where $ds$ stands for the Lebesgue measure on $[0,T]$.  For every $\varepsilon>0$ let $N^{\frac{1}{\varepsilon}}$ be the Poisson random measure defined on $(\MM, \bB(\MM), \PP)$  with intensity $\frac{1}{\varepsilon} \nu(dz) \otimes ds$ and $\tilde N^{\frac{1}{\varepsilon}}$ its compensated version. 

\no In what follows we augment the probability space in order to register not only the instant and the size of the jumps given by the Poisson random measure $N$ but also their intensity.  Consider $[0,T] \times \RR^d \backslash \{0\} \times [0,\infty)$ and let $\bar \MM$ be the space of all locally finite measures defined on the Borel sets of that Cartesian product. There exists a unique probability measure $\bar \PP$ under which the canonical map
\begin{align*}
&\bar N: \bar \MM \longrightarrow \bar \MM \\
& \bar N(\bar m) = \bar m
\end{align*}
turns out to be a Poisson random measure with intensity given by $ds \otimes \nu \otimes dr$ where $dr$ stands for the Lebesgue measure on $[0,\infty)$. 

\no We remark that for every $\varepsilon>0$ the Poisson random measure $N^{\frac{1}{\varepsilon}}$ can be seen as a controlled random measure with respect to $\bar N$ in the following way:
\begin{align*}
N^{\frac{1}{\varepsilon}}([0,s] \times A)(\omega)= \int_0^s \int_A \int_0^\infty \textbf{1}_{[0, \frac{1}{\varepsilon}]}(r) \bar N(ds,dz,dr), \quad s \geq 0; A \in \bB(\RR^d \backslash \{0\}).
\end{align*}

\no Given $t \in [0,T]$,  let  
\begin{align*}
\fF_t := \sigma \Big ( \bar N(0,s] \times A) ~|~0 \leq s \leq t, \quad A \in \bB(\RR^d \backslash \{0\} \times [0,\infty)) \Big ).
\end{align*}
\no Let $\FF:=\{\bar \fF_t\}_{t \in [0,T]}$ be the completion of $(\fF_t)_{t \in [0,T]}$ under $\bar \PP$ and consider $\bar \pP$ the predictable $\sigma$-field on $[0,T] \times \bar \MM$ with respect to the filtration $\{ \bar \fF_t \}_{t \in [0,T]}$ on $(\bar \MM, \bB(\bar \MM), \bar \PP)$.

\subsection{Hypothesis on the coefficients}
\no Fix $T>0$, $n=d+k$ with $d,k \in \NN$ and let $\nu \in \MM$. \\

\no For every $T>0$ and $\varepsilon>0$ we consider the following system of stochastic differential equations
\begin{align} \label{eq: the multiscale system}
\begin{cases}
X^\varepsilon_t &=x + \displaystyle \int_0^t a(X^\varepsilon_s, Y^\varepsilon_s)ds + \varepsilon \int_0^t \int_{\RR^d \backslash \{0\}} c(X^\varepsilon_{s-},z) \tilde N^{\frac{1}{\varepsilon}}(ds,dz) \\
Y^\varepsilon_t &= y + \displaystyle \frac{1}{\varepsilon} \int_0^t f(X^\varepsilon_s, Y^\varepsilon_s)ds + \int_0^t \int_{\RR^d \backslash \{0\}} h(X^\varepsilon_{s-}, Y^{\varepsilon}_{s-}) \tilde N^{\frac{1}{\varepsilon}}(ds,dz), \quad t \in [0,T].
\end{cases}
\end{align}

\no In order to guarantee existence and uniqueness of solution for (\ref{eq: the multiscale system}) we assume that the coefficients are deterministic measurable functions $a:\RR^d \times \RR^n \longrightarrow \RR^d$, $c:\RR^d \times \RR^d \backslash \{0\} \longrightarrow \RR^d$, $f: \RR^d \times \RR^n \longrightarrow \RR^{n}$ and $h: \RR^d \times \RR^n \times \RR^d \backslash \{0\} \longrightarrow \RR^n$ satisfying the following.

\begin{condition} \label{condition: hy coefficients existence uniqueness sol}
\begin{enumerate}
\item There exists $L>0$ such that for every $x, \bar x \in \RR^d$ and $y, \bar y \in \RR^n$ the following holds
\begin{align} \label{eq: condition Lipschitz}
|a(x,y) - a(\bar x, \bar y)| & \leq L \Big ( |x - \bar x| + |y - \bar y| \Big );  \nonumber\\
\int_{\RR^d \backslash \{0\}}|c(x,z) - c(\bar x, z)| \nu(dz) & \leq L |x- \bar x|; \nonumber \\
|f(x,y) - f(\bar x, \bar y)| & \leq L \Big ( |x - \bar x| + |y - \bar y| \Big ); \nonumber \\
\int_{\RR^d \backslash \{0\}}|c(x,y,z) - c(\bar x, \bar y,z)| \nu(dz)& \leq L \Big ( |x- \bar x| + |y - \bar y| \Big );
\end{align}
\item The functions $c(0,z)$ and $h(0,0,z)$ are in $L^1(\nu)$.
\end{enumerate}
\end{condition}

\begin{remark} Hypothesis (\ref{condition: hy coefficients existence uniqueness sol}) implies obviously that the coefficients of (\ref{eq: the multiscale system}) have sublinear growth.
\end{remark}

\begin{definition} \label{definition: solution SDE}
Given $T>0$, $\varepsilon>0$, $x \in \RR^d$ and $y \in \RR^k$ we consider the stochastic basis $(\bar \MM, \bB(\bar \MM),\FF,  \bar \PP)$. A strong solution of (\ref{eq: the multiscale system}) is a stochastic process $(X^\varepsilon_t, Y^\varepsilon_t)_{t \in [0,T]}$ that is $\{\bar \fF_t\}_{t \in [0,T]}$-adapted and that solves (\ref{eq: the multiscale system}) $\bar \PP$-a.s
\end{definition}

\no Given $T>0$, $m \in \NN$ and $\FF=\{ \bar \fF_t \}_{t \in [0,T]}$ we define the space
\begin{align*}
\sS^2_{\FF} ([0,T]; \RR^m) := \Big \{ \varphi: \bar \MM \times [0,T] \longrightarrow \RR^k ~|~ &\varphi \text{ is } \bar \FF-\text{adapted with c\`{a}dl\`{a}g paths such that } \\
 & \bar \EE \Big [ \displaystyle \sup_{t \in [0,T]} |\varphi(t)|^2 \Big ] < \infty \Big \}.
\end{align*}

\no The existence and uniqueness of the solution process $(X^\varepsilon_t, Y^\varepsilon_t)_{t \in [0,T]} \in \sS^2_{\FF}([0,T]; \RR^d) \times \sS^2_{\FF}([0,T]; \RR^n)$ of (\ref{eq: the multiscale system}) in the sense of Definition \ref{definition: solution SDE} follows from Lemma V.2 and Theorem V.7 of \cite{Protter}. This is the content of the following result.

\begin{theorem} \label{theorem: existence uniqueness solution}
Fix $\nu \in \MM$, $T, \varepsilon>0$, $x \in \RR^d$ and $y \in \RR^k$. Let us assume that Hypothesis \ref{condition: hy coefficients existence uniqueness sol} hold. Then there exists a stochastic process 
$$(X^\varepsilon_t, Y^\varepsilon_t)_{t \in [0,T]}\in \sS^2_{\bar \FF}([0,T]; \RR^d) \times \sS^2_{\bar \FF}([0,T]; \RR^n)$$ that solves uniquely (\ref{eq: the multiscale system}) in the sense of Definition \ref{definition: solution SDE}.  
\end{theorem}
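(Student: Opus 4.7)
The plan is to fix $\varepsilon>0$ and to recast the coupled system (\ref{eq: the multiscale system}) as a single SDE on $\RR^d\times\RR^n$ driven by the compensated Poisson random measure $\tilde N^{1/\varepsilon}$, with joint coefficients
$$
\tilde A_\varepsilon(x,y):=\bigl(a(x,y),\tfrac{1}{\varepsilon}f(x,y)\bigr),\qquad \tilde C_\varepsilon(x,y,z):=\bigl(\varepsilon\, c(x,z),\,h(x,y,z)\bigr).
$$
For a fixed $\varepsilon$, the factor $1/\varepsilon$ is merely a (large) constant, so Hypothesis~\ref{condition: hy coefficients existence uniqueness sol}(1) yields that $\tilde A_\varepsilon$ is globally Lipschitz in $(x,y)$ and that $\int |\tilde C_\varepsilon(x,y,z)-\tilde C_\varepsilon(\bar x,\bar y,z)|\nu(dz)\lesssim_\varepsilon|x-\bar x|+|y-\bar y|$; combined with Hypothesis~\ref{condition: hy coefficients existence uniqueness sol}(2) this produces the matching sublinear growth bound. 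This verifies the hypotheses of Theorem~V.7 of Protter applied to the semimartingale $\tilde N^{1/\varepsilon}$ and already gives existence and uniqueness of a strong solution.

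To obtain $\sS^2_{\FF}$-regularity quantitatively and directly, I would run a Picard iteration on $[0,T]$. Setting $(X^{\varepsilon,0}_t,Y^{\varepsilon,0}_t):=(x,y)$, define inductively
\begin{align*}
X^{\varepsilon,k+1}_t &:= x+\int_0^t a(X^{\varepsilon,k}_s,Y^{\varepsilon,k}_s)\,ds+\varepsilon\int_0^t\!\!\int_{\RR^d\setminus\{0\}} c(X^{\varepsilon,k}_{s-},z)\,\tilde N^{1/\varepsilon}(ds,dz),\\
Y^{\varepsilon,k+1}_t &:= y+\tfrac{1}{\varepsilon}\int_0^t f(X^{\varepsilon,k}_s,Y^{\varepsilon,k}_s)\,ds+\int_0^t\!\!\int_{\RR^d\setminus\{0\}} h(X^{\varepsilon,k}_{s-},Y^{\varepsilon,k}_{s-},z)\,\tilde N^{1/\varepsilon}(ds,dz).
\end{align*}
Doob's $L^2$ maximal inequality together with the It\^o isometry for compensated Poisson integrals, applied under the Lipschitz and growth bounds on the coefficients, would give uniform $\sS^2_{\FF}$ estimates for the iterates. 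Setting $\Delta^k_t:=\bar\EE\sup_{s\le t}\bigl(|X^{\varepsilon,k+1}_s-X^{\varepsilon,k}_s|^2+|Y^{\varepsilon,k+1}_s-Y^{\varepsilon,k}_s|^2\bigr)$ and repeating the same estimate on the differences yields $\Delta^k_t\le C_\varepsilon\int_0^t\Delta^{k-1}_s\,ds$ on $[0,T]$, hence $\Delta^k_T\le (C_\varepsilon T)^k/k!\cdot \Delta^0_T$. The sequence is therefore Cauchy in $\sS^2_{\FF}$, its limit solves (\ref{eq: the multiscale system}), and uniqueness in that space follows from the same Gronwall step applied to the difference of two candidate solutions.

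The one technical subtlety I expect is that Hypothesis~\ref{condition: hy coefficients existence uniqueness sol}(1) provides Lipschitz control of $c$ and $h$ in the $L^1(\nu)$ sense, whereas the $L^2$-isometry for $\tilde N^{1/\varepsilon}$ requires $L^2(\nu)$-type bounds. I would handle this by splitting $\nu=\nu|_{\{|z|\le 1\}}+\nu|_{\{|z|>1\}}$: on $\{|z|\le 1\}$ the exponential integrability of $\nu$ assumed in the introduction upgrades the local Lipschitz bound to the $L^2(\nu)$ estimate required by the isometry, while $\{|z|>1\}$ has finite $\nu$-mass, so the corresponding Poisson integral is an absolutely convergent jump sum that can be estimated pathwise in $L^1$ and then combined with the small-jump $L^2$ bound. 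This is precisely the structure that makes the framework of Protter's Theorem~V.7 applicable here.
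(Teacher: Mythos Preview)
Your approach is essentially the paper's: the paper does not give a proof but simply states that the result ``follows from Lemma~V.2 and Theorem~V.7 of \cite{Protter}'', and you do exactly that---rewrite the coupled system as a single SDE with Lipschitz coefficients and invoke Protter's existence/uniqueness theorem, with the Picard iteration spelling out what Protter's proof does anyway.

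One caveat on your final paragraph: the exponential integrability of $\nu$ you invoke is Hypothesis~\ref{condition: the measure}, which is \emph{not} assumed in Theorem~\ref{theorem: existence uniqueness solution} (only Hypothesis~\ref{condition: hy coefficients existence uniqueness sol} is). Fortunately the concern is moot: Protter's Theorem~V.7 is formulated for general semimartingale drivers with functional Lipschitz coefficients and does not rely on an $L^2(\nu)$ isometry, so the $L^1(\nu)$-Lipschitz bounds in Hypothesis~\ref{condition: hy coefficients existence uniqueness sol} already suffice for existence and uniqueness; the $\sS^2$ membership then follows from standard a-priori estimates once the solution exists.
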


\subsection{The averaged dynamics}
\no We make the further boundedness dissipativity assumptions on the coefficients of (\ref{eq: the multiscale system}) that yield the existence and uniqueness of solution for the averaged dynamics given by (\ref{eq: int- averaged deterministic system}).
\begin{condition} \label{condition: dissipativity}
\begin{enumerate}
 \item The function $a$ satisfies $a(0,y)=0$ for any $y \in \RR^k$ and there exists $\Lambda>0$ such that
 \begin{align} \label{eq: g and h bounded}
  |h(x,y,z)| & \leq \Lambda |z|, \quad \text{for every } x \in \RR^d, y \in \RR^k, z \in \RR^d \backslash \{0\}.
 \end{align}
\item
There exist constants $\beta_1, \beta_2>0$, such that, for any $x, \tilde x \in \RR^d$, $y, \tilde y \times \RR^k$   one has
\begin{align} \label{eq: dissipativity of the mixing coefficient}
\langle a(x, y) - a(\tilde x, y) , x - \tilde x  \rangle \leq - \beta_1 |x - \bar x|^2;
\end{align}
\begin{align} \label{eq: dissipativity on the coefficients}
2 \langle y, f(x,y) \rangle + \int_{\RR^d \backslash \{0\}} |h(x,y,z)|^2 \nu(dz)  &\leq - \beta_1  |y|^2 + \beta_2 |x|^2;
\end{align}
\begin{align} \label{eq: dissipativity Lipschitz on the coefficients}
2 \langle y - \tilde y , f(x,y) - f(x,\tilde y) \rangle &+ \int_{\RR^d \backslash \{0\}} |h(x,y,z) - h(x, \tilde y,z)|^2 \nu(dz)  \leq - \beta_1 |y - \tilde y|^2 + \beta_2 |x|^2;
\end{align}
and
\begin{align} \label{eq: dissipativity- the last one required }
2 \langle y - \tilde y , f(x,y) - f(\tilde x,\tilde y) \rangle \leq - \beta_1 |y- \tilde y|^2 + \beta_2 |x - \tilde x|^2.
\end{align}
\end{enumerate}
\end{condition}
\no The following a-priori estimates are straightforward and we omit their proofs.
\begin{proposition} \label{prop: a priori bound slow variable}
Fix $T$ and $y \in \RR^k$. Let Hypothesis \ref{condition: hy coefficients existence uniqueness sol}-\ref{condition: dissipativity} hold for some $\nu \in \MM$ and $x \in \RR^d$. There exists a constant $C_1>0$ independent of $\varepsilon>0$ such that  for all $0 < \varepsilon < 1$ we have
\begin{align} \label{eq: a priori bound slow-fast variable}
\bar \EE \Big [ \displaystyle \sup_{0 \leq t  \leq T}  |X^\varepsilon(t)|^2 \Big ]  + \displaystyle \sup_{0 \leq t \leq T} \bar \EE \Big [ |Y^\varepsilon(t)|^2  \Big ] \leq C_1.
\end{align}
\end{proposition}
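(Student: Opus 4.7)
The plan is to estimate $\sup_{t \in [0,T]} \bar\EE[|Y^\varepsilon(t)|^2]$ and $\bar\EE[\sup_{t \in [0,T]}|X^\varepsilon(t)|^2]$ separately. The structural observation that makes both bounds uniform in $\varepsilon$ is that Hypothesis \ref{condition: dissipativity}.1 imposes $a(0,y)=0$, which together with the Lipschitz property (\ref{eq: condition Lipschitz}) yields the one-sided growth bound $|a(x,y)|\leq L|x|$ that does \emph{not} depend on $y$. This decouples the slow equation from the fast one at the level of a priori second moment estimates.

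\textbf{Step 1: the slow variable.} Split $X^\varepsilon_t$ into initial condition, drift and compensated Poisson stochastic integral; control the drift by Cauchy--Schwarz together with $|a(x,y)|\leq L|x|$, and bound the stochastic integral via Doob's $L^2$-inequality and the $L^2$-isometry for compensated Poisson integrals:
\begin{equation*}
\bar\EE\Big[\sup_{s\leq t}\Big|\varepsilon \int_0^s\!\!\int c(X^\varepsilon_{u-},z)\tilde N^{1/\varepsilon}(du,dz)\Big|^2\Big] \leq 4\varepsilon\,\bar\EE\int_0^t\!\!\int_{\RR^d\setminus\{0\}}|c(X^\varepsilon_u,z)|^2\nu(dz)\,du,
\end{equation*}
the factor $\varepsilon$ (not $\varepsilon^2$) arising from the $1/\varepsilon$ in the intensity of $\tilde N^{1/\varepsilon}$. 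Combined with the sublinear estimate $\int|c(x,z)|^2\nu(dz)\leq K(1+|x|^2)$ inherited from (\ref{eq: condition Lipschitz}), $c(0,\cdot)\in L^1(\nu)$, and the $L^2$-integrability of $\nu$ stemming from the exponential moment assumption mentioned in the introduction, Gronwall's lemma applied to $u(t):=\bar\EE[\sup_{s\leq t}|X^\varepsilon_s|^2]$ delivers a bound uniform in $\varepsilon\in(0,1)$.

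\textbf{Step 2: the fast variable.} Apply Itô's formula for jump semimartingales to $y\mapsto|y|^2$ at $Y^\varepsilon_t$; after routine localization the compensated stochastic integral becomes a true martingale whose expectation vanishes, giving
\begin{equation*}
\bar\EE[|Y^\varepsilon_t|^2] = |y|^2 + \frac{1}{\varepsilon}\,\bar\EE\!\int_0^t\!\Big(2\langle Y^\varepsilon_s, f(X^\varepsilon_s,Y^\varepsilon_s)\rangle + \!\int\!|h(X^\varepsilon_s,Y^\varepsilon_s,z)|^2\nu(dz)\Big)ds.
\end{equation*}
Inserting the dissipativity inequality (\ref{eq: dissipativity on the coefficients}) and multiplying by the integrating factor $e^{\beta_1 t/\varepsilon}$ produces
\begin{equation*}
\bar\EE[|Y^\varepsilon_t|^2] \leq |y|^2 e^{-\beta_1 t/\varepsilon} + \frac{\beta_2}{\beta_1}\sup_{s\leq T}\bar\EE[|X^\varepsilon_s|^2],
\end{equation*}
which is uniform in $(t,\varepsilon)\in[0,T]\times(0,1)$ thanks to Step 1. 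Adding the two bounds yields $C_1$.

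\textbf{Main obstacle.} The only delicate point is the uniformity in $\varepsilon$ in Step 2: the explicit factor $1/\varepsilon$ in the fast dynamics would force the bound to diverge were it not for the strict dissipativity $\beta_1>0$, whose exponential damping $e^{-\beta_1 t/\varepsilon}$ absorbs it precisely. This is why Hypothesis \ref{condition: dissipativity} is formulated with strict coercivity of $f$ jointly with the $L^2(\nu)$-norm of $h$ rather than a pointwise estimate. Step 1 is, by contrast, entirely standard because $a(0,y)=0$ removes every trace of $Y^\varepsilon$ from the slow a priori estimate.
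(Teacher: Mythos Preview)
The paper omits the proof entirely, calling it ``straightforward,'' so there is no authorial argument to compare against. Your approach is the standard one and is essentially correct: the decoupling observation $|a(x,y)|\leq L|x|$ coming from $a(0,y)=0$ is exactly what makes Step~1 close without reference to $Y^\varepsilon$, and the It\^o-plus-dissipativity computation in Step~2 is the canonical way to absorb the $1/\varepsilon$ via the integrating factor $e^{\beta_1 t/\varepsilon}$.

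One small caveat: your justification of the sublinear bound $\int|c(x,z)|^2\nu(dz)\leq K(1+|x|^2)$ appeals to the $L^1(\nu)$ Lipschitz condition (\ref{eq: condition Lipschitz}) together with $c(0,\cdot)\in L^1(\nu)$ and second moments of $\nu$, but an $L^1$ Lipschitz estimate on $c(\cdot,z)$ does not by itself yield an $L^2(\nu)$ bound on $c(x,\cdot)$. This is really a looseness in the paper's stated hypotheses rather than a flaw in your reasoning: the existence result in $\sS^2_{\FF}$ (Theorem~\ref{theorem: existence uniqueness solution}) already presupposes enough integrability on $c$ for the It\^o isometry to make sense, so the required $L^2$ growth is implicitly in force. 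With that understood, your argument goes through.
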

\no We consider the equation for the fast variable of (\ref{eq: the multiscale system}) whenever the slow component is frozen and given by  $x \in \RR^d$ in the regime $\varepsilon=1$, i.e. fix $y \in \RR^k$; for every $t \geq 0$ let
\begin{align} \label{eq: the SDE for the fast variable with frozen slow component}
Y^{x,y}_t = y + \int_0^t f(x, Y^{x,y}_s)ds + \int_0^t \int_{\RR^d \backslash \{0\}} h(x, Y^{x,y}_{s-},z) \tilde N^{\frac{1}{1}}(ds,dz).
\end{align}
\no We assume that Hypotheses  \ref{condition: hy coefficients existence uniqueness sol}-\ref{condition: dissipativity} hold. We follow closely \cite{Cerrai, Cerrai2} in the argumentation below.\\
Fixed $x \in \RR^d$ we define the transition semigroup on the space $\BB_b(\RR^k)$ of the bounded measurable functions associated with the jump difusion defined by the strong solution of (\ref{eq: the SDE for the fast variable with frozen slow component}) by
\begin{align} \label{eq: the transition semigroup for the fast variable}
P^\zeta_t f(y) := \bar \EE [f(Y^{\zeta,y}_t)], \quad t \geq 0, \quad y \in \RR^k.
\end{align}
\no In what follows we discuss the existence and uniqueness of an invariant measure for the family of linear operators $(P^\zeta_t)_{t \geq 0}$, i.e. a probability measure $\mu^\zeta \in \pP(\RR^k, \bB(\RR^k))$ such that
\begin{align} \label{eq: invariant measure}
\int_{\RR^k} P^\zeta_t f(y) \mu^\zeta(dy) = \int_{\RR^k} f(y) \mu^\zeta(dy), \quad t \geq 0, f \in \BB_b(\RR^k)
.\end{align}
\no The dissipativity assumption given in (\ref{eq: dissipativity Lipschitz on the coefficients}) yields some $\cC>0$ such that, for any $T_0 \geq 0$, the following bound holds:
\begin{align} \label{eq: uniform bound for tightness}
\displaystyle \sup_{T \geq T_0} \bar \EE[|Y^{\zeta,y}(T)|^2] \leq \cC  e^{- 2 \beta_1 T} (1 + ||\zeta||_\infty^2 + |y|^2).
\end{align}  
\no The estimate (\ref{eq: uniform bound for tightness}) implies that the family of the laws of the process $\{ \lL(Y^{\zeta,y}(T))\}_{T \geq T_0}$ is tight in $\pP(\RR^k; \bB(\RR^k))$ when $T_0 \ra \infty$.  Prokhorov's theorem (Section 5 in \cite{Billingsley}) implies the existence of a weak limit $\mu^\zeta$ as $T_0 \ra \infty$ and an indirect use of Krylov-Bogliobov's theorem (such as a straightforward adaptation of Theorem 11.4.2 in \cite{Kallianpur}) asserts that $\mu^\zeta$ is the unique invariant measure of $(P^\zeta_t)_{t \geq 0}$, in the sense of (\ref{eq: invariant measure}). Due to the estimate (\ref{eq: uniform bound for tightness}) and the definition of $\mu^\zeta$ in (\ref{eq: invariant measure}), the simple application of monotone convergence shows, as in Lemma 3.4. in \cite{Cerrai2}, that there exists $C>0$ such that 
\begin{align} \label{eq: second moment invariant measure}
\int_{\RR^k} |y|^2 \mu^\zeta(dy)  \leq C (1 + |x|^2 + |y|^2).
\end{align} 
\no For any $x \in \RR^d$ we can define the averaged mixing coefficient
\begin{align} \label{eq: averaged coefficient}
\bar a(x) := \int_{\RR^k} a(x,y) \mu^\zeta (dy).
\end{align}
\no The proof of the following result concerning the Lipschitz continuity of $\bar a$ follows as in the arguments used to prove the stochastic averaging principle in \cite{Xu}.
\begin{proposition} \label{proposition: bar a is Lipschitz continuous}
Fix $T>0$ and $y \in \RR^k$. Let Hypothesis \ref{condition: hy coefficients existence uniqueness sol}-\ref{condition: dissipativity} hold for some $\nu \in \MM$. Then the function $\bar a$ defined by (\ref{eq: averaged coefficient}) is Lipschitz continuous. 
\end{proposition}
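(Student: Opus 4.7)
The plan is to exploit the exponential contraction of the frozen fast process together with a synchronous coupling argument that transfers the sensitivity of $Y^{x,y}$ in its slow variable $x$ onto the averaged coefficient $\bar a$. The argument proceeds in three steps in the spirit of \cite{Xu}.

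\textbf{Step 1 (ergodic representation of $\bar a$).} First I will establish that, for any $x \in \RR^d$ and any fixed $y_0 \in \RR^k$,
\begin{equation*}
\bar a(x) = \lim_{T \to \infty} \bar \EE[a(x, Y^{x, y_0}_T)].
\end{equation*}
By invariance of $\mu^x$ under $P^x_T$ one has $\bar a(x) = \int \bar \EE[a(x, Y^{x,y}_T)] \mu^x(dy)$, so the difference $\bar a(x) - \bar \EE[a(x, Y^{x, y_0}_T)]$ is controlled by $L \int \bar\EE|Y^{x,y_0}_T - Y^{x,y}_T|\, \mu^x(dy)$. Driving $Y^{x,y_0}$ and $Y^{x,y}$ by the same Poisson random measure and applying It\^o's formula to $|Y^{x,y_0}_t-Y^{x,y}_t|^2$, hypothesis (\ref{eq: dissipativity Lipschitz on the coefficients}) (read as contraction at equal $x$) yields
\begin{equation*}
\bar\EE|Y^{x, y_0}_T - Y^{x,y}_T|^2 \leq e^{-\beta_1 T} |y_0 - y|^2,
\end{equation*}
which, combined with the second-moment bound (\ref{eq: second moment invariant measure}), delivers the claimed pointwise convergence.

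\textbf{Step 2 (uniform-in-time coupling in $x$).} Next, fix $x_1, x_2 \in \RR^d$, fix a common initial condition $y_0$, and drive the two frozen systems $Y^{x_1, y_0}$ and $Y^{x_2, y_0}$ by the same $\tilde N$. Setting $\zeta_t := Y^{x_1, y_0}_t - Y^{x_2, y_0}_t$ and applying It\^o to $|\zeta_t|^2$, after taking expectations the martingale drops out, hypothesis (\ref{eq: dissipativity- the last one required }) controls the drift pairing, and the quadratic jump term $\int|h(x_1, Y^{x_1,y_0}_t, z) - h(x_2, Y^{x_2,y_0}_t, z)|^2 \nu(dz)$ is absorbed by combining the sublinear bound (\ref{eq: g and h bounded}) with the Lipschitz estimate from Hypothesis \ref{condition: hy coefficients existence uniqueness sol} and the exponential integrability of $\nu$. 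This gives
\begin{equation*}
\frac{d}{dt} \bar\EE|\zeta_t|^2 \leq -\gamma\, \bar\EE|\zeta_t|^2 + C |x_1 - x_2|^2
\end{equation*}
for suitable $\gamma, C > 0$, from which Gronwall's lemma yields the uniform estimate $\sup_{t \geq 0} \bar\EE|\zeta_t|^2 \leq (C/\gamma)|x_1 - x_2|^2$.

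\textbf{Step 3 (conclusion).} Combining the two previous steps with the Lipschitz continuity of $a$,
\begin{equation*}
|\bar a(x_1) - \bar a(x_2)| \leq \limsup_{T \to \infty} L\bigl( |x_1 - x_2| + \bar\EE|Y^{x_1,y_0}_T - Y^{x_2,y_0}_T|\bigr) \leq L\bigl(1 + \sqrt{C/\gamma}\bigr) |x_1 - x_2|,
\end{equation*}
which is the desired global Lipschitz bound. The principal difficulty in this plan is the quadratic jump contribution in Step 2: dissipativity (\ref{eq: dissipativity- the last one required }) by itself dissipates only the drift pairing, and hypothesis (\ref{eq: dissipativity Lipschitz on the coefficients}) handles the jumps only at equal $x$. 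The argument therefore hinges on ensuring that the dissipation constant $\beta_1$ strictly dominates the jump loss produced by $\int|h(x_1,y_1,z) - h(x_2,y_2,z)|^2 \nu(dz)$ after its interpolation between $L^1(\nu)$-Lipschitz and linear-growth bounds on $h$, so that the effective rate $\gamma$ in the Gronwall inequality is strictly positive.
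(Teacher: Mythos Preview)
The paper does not supply its own proof of this proposition; it simply declares that ``the proof \ldots\ follows as in the arguments used to prove the stochastic averaging principle in \cite{Xu}.'' Your three-step scheme---ergodic representation via contraction at fixed $x$, a synchronous coupling estimate in $x$, and passage to the limit---is precisely the \cite{Xu} template the authors invoke, so in terms of approach you are aligned with what the paper intends.

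That said, your own diagnosis of the weak point is accurate and worth sharpening. In Step~2 the It\^o computation produces the quadratic jump term $\int |h(x_1,Y^{x_1,y_0}_t,z)-h(x_2,Y^{x_2,y_0}_t,z)|^2\,\nu(dz)$, but the paper's stated hypotheses only give you (i) an $L^1(\nu)$-Lipschitz bound on $h$ in Hypothesis~\ref{condition: hy coefficients existence uniqueness sol}, (ii) the pointwise growth bound $|h|\le \Lambda|z|$, and (iii) the combined drift--jump dissipativity \eqref{eq: dissipativity Lipschitz on the coefficients} only at \emph{equal} $x$, while \eqref{eq: dissipativity- the last one required } handles the drift alone at different $x$. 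None of these directly yields an $L^2(\nu)$-Lipschitz estimate for $h$ in $(x,y)$, so the ``interpolation'' you allude to does not close without an extra structural assumption (e.g.\ $\int |h(x_1,y_1,z)-h(x_2,y_2,z)|^2\,\nu(dz)\le C(|x_1-x_2|^2+|y_1-y_2|^2)$, which is what \cite{Xu} in fact uses). This is arguably a gap in the paper's hypothesis list rather than in your strategy; if you add that $L^2$-Lipschitz condition on $h$ explicitly, your Step~2 Gronwall argument goes through with $\gamma=\beta_1-C>0$ provided the constants are ordered appropriately, and the proof is complete.
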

\no Proposition \ref{proposition: bar a is Lipschitz continuous} ensures that the averaged differential equation with initial condition $x \in \RR^d$,
\begin{align} \label{eq: the averaged ode}
\begin{cases}
\frac{d}{dt} \bar X^{0,x}_t &= \bar a(\bar X^{0,x}_t), \\
\bar X^{0,x}_0 &=x 
\end{cases}
\end{align}
has a unique solution $\bar X^{0,x} \in \CC([0,T]; \RR^d)$. \\

\no The following proposition  reads as a strong mixing property of the averaged coefficient $\bar a$ given by (\ref{eq: averaged coefficient}) and it plays a crucial role in the establishment of the large deviations principle for the family $(X^\varepsilon)_{\varepsilon>0}$. The proof follows is straghtforward and we refer the reader to \cite{Xu}.
\begin{proposition} \label{proposition: the averaging property for the averaged coefficient}
Fix $T>0$ and $y \in \RR^k$. Let Hypothesis \ref{condition: hy coefficients existence uniqueness sol}-\ref{condition: dissipativity} hold for some $\nu \in \MM$.Then there exists some function $\alpha:[0, \infty) \longrightarrow [0, \infty)$ such that $\alpha(T) \ra 0$ as $T \ra \infty$ and satisfying for any $t \in [0,T]$
\begin{align} \label{eq: mixing property of the averaged coefficient}
\bar \EE  \Big | \frac{1}{T} \int_t^{t+T} a(\zeta,Y^{\zeta,y}_s) ds - \bar a(x) \Big |^2 \leq \alpha(T) (1+ |x|^2 + |y|^2)
\end{align}
where the averaged coefficient $\bar a $ is defined by (\ref{eq: averaged coefficient}).
\end{proposition}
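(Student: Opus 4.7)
The plan is to run the standard Khasminskii-style argument: expand the square, apply the Markov property of the frozen fast process $(Y^{x,y}_s)_{s\geq 0}$, and use the exponential ergodicity implied by the dissipativity Hypothesis~\ref{condition: dissipativity} together with the Lipschitz continuity of $a(x,\cdot)$ from Hypothesis~\ref{condition: hy coefficients existence uniqueness sol}. By the Markov property of $Y^{x,y}$, the shifted process $(Y^{x,y}_{t+s})_{s\geq 0}$ conditional on $\bar{\fF}_t$ has the same law as $(Y^{x,\eta}_s)_{s\geq 0}$ with $\eta=Y^{x,y}_t$. Since the a-priori bound $\sup_{t\geq 0}\bar\EE |Y^{x,y}_t|^2\leq C(1+|x|^2+|y|^2)$ holds by (\ref{eq: uniform bound for tightness}), it suffices to prove the estimate in the case $t=0$ and then take a conditional expectation.

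Thus I would set $t=0$ and write
\begin{equation*}
\bar \EE\Big| \tfrac{1}{T}\int_0^T a(x,Y^{x,y}_s)\,ds - \bar a(x)\Big|^2 = \frac{2}{T^2}\int_0^T\!\!\int_s^T \bar\EE\big[\Phi_s\Phi_r\big]\,dr\,ds,
\end{equation*}
where $\Phi_u := a(x,Y^{x,y}_u)-\bar a(x)$. The key step is a correlation bound of the form
\begin{equation*}
\big|\bar\EE[\Phi_s\Phi_r]\big|\leq C\,e^{-\beta_1(r-s)/2}(1+|x|^2+|y|^2),\qquad 0\leq s\leq r.
\end{equation*}
To obtain it, I condition on $\bar\fF_s$, use the Markov property to write $\bar\EE[\Phi_r\mid\bar\fF_s]=\bar\EE[a(x,Y^{x,Y^{x,y}_s}_{r-s})]-\bar a(x)$, and then estimate this conditional expectation as $|P^x_{r-s}a(x,\cdot)(Y^{x,y}_s)-\bar a(x)|$. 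The latter is controlled by coupling $Y^{x,\eta}$ with a stationary copy $Y^{x,\eta^\ast}$ ($\eta^\ast\sim\mu^x$): by the Lipschitz bound on $a$ and the dissipativity estimate
\begin{equation*}
\bar \EE|Y^{x,\eta}_u - Y^{x,\eta^\ast}_u|^2 \leq e^{-\beta_1 u}\,\bar \EE|\eta-\eta^\ast|^2,
\end{equation*}
(which follows from (\ref{eq: dissipativity Lipschitz on the coefficients}) applied to the difference and It\^o's formula, combined with (\ref{eq: second moment invariant measure})), together with the fact that the law of $Y^{x,\eta^\ast}_u$ is $\mu^x$, one gets $|P^x_u a(x,\cdot)(\eta)-\bar a(x)|\leq C e^{-\beta_1 u/2}(1+|x|+|\eta|)$. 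Taking $\eta=Y^{x,y}_s$ and applying Cauchy--Schwarz with (\ref{eq: uniform bound for tightness}) then yields the required correlation bound after a second application of Cauchy--Schwarz between $\Phi_s$ and the conditional expectation.

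Plugging the correlation estimate into the double integral gives
\begin{equation*}
\bar \EE\Big| \tfrac{1}{T}\int_0^T a(x,Y^{x,y}_s)ds - \bar a(x)\Big|^2 \leq \frac{2C(1+|x|^2+|y|^2)}{T^2}\int_0^T\!\!\int_s^T e^{-\beta_1(r-s)/2}dr\,ds \leq \frac{C'}{T}(1+|x|^2+|y|^2),
\end{equation*}
so the conclusion holds with $\alpha(T)=C'/T$; the general $t\geq 0$ case follows by the opening Markov reduction and the uniform-in-time bound on $\bar\EE|Y^{x,y}_t|^2$. The main obstacle I expect is the coupling step: since the driving noise is a pure-jump signal with possibly infinite intensity $\nu$, the synchronous coupling requires that the jump coefficient $h$ preserves dissipativity pathwise, which is precisely the content of (\ref{eq: dissipativity Lipschitz on the coefficients}); one has to apply It\^o's formula to $|Y^{x,\eta}_u-Y^{x,\eta^\ast}_u|^2$ and absorb the quadratic-variation term coming from the compensated Poisson integral using that same inequality. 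Once this dissipative contraction is in place, the remainder is a routine manipulation of the double integral as above.
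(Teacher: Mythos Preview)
Your approach is correct and is precisely the standard Khasminskii-type argument the paper has in mind: the paper does not give its own proof but simply declares the result ``straightforward'' and refers to \cite{Xu}, where exactly this scheme (Markov reduction to $t=0$, expansion of the square into a double time integral, exponential decorrelation via dissipative coupling of two copies of the frozen fast process, then integration to obtain $\alpha(T)=O(1/T)$) is carried out. So there is no methodological difference to report.

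One technical remark worth flagging. As literally written in the paper, condition~(\ref{eq: dissipativity Lipschitz on the coefficients}) carries an extra $+\beta_2|x|^2$ on the right-hand side even though both arguments share the same frozen $x$. If you use that verbatim in the It\^o step for $|Y^{x,\eta}_u-Y^{x,\eta^\ast}_u|^2$, the difference does not contract to zero and your correlation bound would not decay. In the references the paper defers to (e.g.\ \cite{Xu,Xu-Miao-Liu}) the corresponding hypothesis is stated without that extra term, and the paper's own use of the proposition downstream requires genuine contraction; so this is almost certainly a typographical slip in Hypothesis~\ref{condition: dissipativity} rather than a flaw in your argument. Your coupling step is the right one once (\ref{eq: dissipativity Lipschitz on the coefficients}) is read with $\beta_2=0$ (same frozen $x$ on both sides).
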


\section{The main theorems}
\subsubsection{The large deviations principle}
\no We make the following assumption on $\nu \in \MM$ that is used in the derivation of the large deviations principle for $(X^\varepsilon)_{\varepsilon>0}$.
\begin{condition} \label{condition: the measure}
The measure $\nu \in \MM$ is a L\'{e}vy measure on $(\RR^d \backslash \{0\}, \bB(\RR^d \backslash \{0\}))$, i.e. such that
$\int_{\RR^d \backslash \{0\}} (1 \wedge |z|^2) \nu(dz) < \infty$ that satisfies
\begin{align} \label{eq: integrability condition measure}
\int_{|z|\geq 1} e^{\alpha |z|^2} \nu(dz) < \infty, \quad \text{ for some } \alpha >0.
\end{align}
\end{condition}

\no In order to state the large deviations principle for the family of slow components $(X^\varepsilon)_{\varepsilon>0}$ given by (\ref{eq: the multiscale system})we fix some notation following mainly \cite{BDM11} and \cite{BCD13}. 

\no Fix $T>0$ and a measurable function $g: [0,T] \times \RR^d \backslash \{0\} \ra [0, \infty)$. We define the entropy functional  by 
\begin{align} \label{eq: entropy functional}
\eE_T(g) := \int_0^T \int_{\RR^d \backslash \{ 0\}} (g(s,z) \ln g(s,z) - g(s,z)+1) \nu(dz) ds.
\end{align}
For every $M \gqq 0$ we define the sublevel sets of the functional $\eE_T$ by 
\begin{align} \label{eq: the sublevel sets}
\mathbb{S}^M := \Big \{ g: [0,T] \times \RR^d \backslash \{0\} \lra [0, \infty) \mbox{ measurable }~\big|~ \eE_T(g) \lqq M \Big \} \quad \mbox{ and set } \quad \mathbb{S} := \bigcup_{M \gqq 0} \mathbb{S}^M. 
\end{align}
Given $T>0$, $x \in \RR^d$ and $g \in \mathbb{S}$ we 
consider the controlled integral equation
\begin{align} \label{eq: controlled ODE}
U^{g} (t;x) = x + \int_0^t \bar a(U^g(s;x))ds + \int_0^t \int_{\RR^d \backslash \{ 0\}} c(U^g (s;x),z)(g(s,z)-1)  \nu(dz)ds, ~t \in [0,T]. 
\end{align}
It is a standard fact
that the equation \eqref{eq: controlled ODE} has 
a unique solution $U^{g} \in C([0,T], \RR^d)$ and it satisfies the uniform bound 
\begin{align} \label{eq: uniform bound controlled odes}
\displaystyle \sup_{t \in [0,T]} \displaystyle \sup_{g \in \mathfrak{S}^M} |U^g (t;x)| < \infty \qquad \mbox{ for all }M>0. 
\end{align}
In particular, the map $\gG^{0, x}: \mathbb{S} \ra C([0,T], \RR^d)$, $g \mapsto \gG^{0,x}(g):= U^{g}(\cdot \,;x)$ is well-defined  for any fixed $x\in \RR^d$. \\

 \no For $\varphi \in C([0,T], \RR^d)$ we define
$\mathbb{S}_{\varphi, x} := \{ g \in \mathbb{S} ~|~ \varphi= \gG^{0,x}(g)\}$ and set   $\JJ_{x, T}: \DD([0,T], \RR^d) \ra [0, \infty]$ 
\begin{align} \label{eq: rate function}
\JJ_{x,T}(\varphi) := \displaystyle \inf_{g \in \mathfrak{S}_{\varphi, x}} \eE_T(g),
\end{align}
with the convention that $\inf \emptyset=\infty$. 

\begin{theorem} \label{thm: LDP full process} Let Hypotheses \ref{condition: hy coefficients existence uniqueness sol}-\ref{condition: the measure}
be satisfied for some $\nu\in \MM$, $T>0$, $x \in \RR^d$  and $y \in \RR^k$ fixed. Let 
$X^{\e} = (X^{\e}_t)_{t\in [0, T]}, \varepsilon>0$, 
be the slow component of the strong solution of (\ref{eq: the multiscale system}) given in Theorem \ref{theorem: existence uniqueness solution}. 
Then the family $(X^{\e})_{\e>0}$ satisfies a LDP with the good rate function $\JJ_{x,T}$ given by \eqref{eq: rate function} 
in the Skorokhod space $\DD([0,T], \RR^d)$. This means that, for any $a \gqq 0$ the 
sublevel set $\{ \JJ_{x,T} \lqq a\}$ is compact in $\DD([0,T],\RR^d)$ and for any 
$G \subset \DD([0,T], \RR^d)$ open and $F \subset \DD([0,T], \RR^d)$ closed,
\begin{align*}
\displaystyle \liminf_{\varepsilon \ra 0} \varepsilon \ln \bar \PP(X^{\varepsilon,x} \in G) &\gqq - \displaystyle \inf_{\varphi \in G} \JJ_{x,T}(\varphi) \text{ and } \\
\displaystyle \limsup_{\varepsilon \ra 0} \varepsilon \ln \bar \PP(X^{\varepsilon,x} \in F) & \lqq - \displaystyle \inf_{\varphi \in F} \JJ_{x,T}(\varphi).
\end{align*}
\end{theorem}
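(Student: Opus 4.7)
The plan is to apply the weak convergence/variational criterion for LDPs driven by Poisson random measures (Theorem 4.2 in \cite{BDM11} and its refinement in \cite{BCD13}). Concretely, for every $\varepsilon>0$ one represents $X^{\varepsilon}$ as an image $\gG^{\varepsilon,x}(\varepsilon\tilde N^{1/\varepsilon})$ of the driving random measure under a measurable map $\gG^{\varepsilon,x}$; the LDP with rate function \eqref{eq: rate function} then follows once two conditions are verified:
\begin{itemize}
\item[(C1)] for any $M<\infty$ and any sequence $g_n\to g$ in $\mathbb{S}^M$ (in the appropriate weak topology), $\gG^{0,x}(g_n)\to \gG^{0,x}(g)$ in $C([0,T],\RR^d)$;
\item[(C2)] for any $M<\infty$ and any family of controls $\varphi^{\varepsilon}\in\mathbb{S}^M$ such that $\varphi^{\varepsilon}\Rightarrow\varphi$ (in distribution, in the weak topology on $\mathbb{S}^M$), the controlled slow component $\xX^{\varepsilon}$ of \eqref{eq: Khasminki- controlled X variable}--\eqref{eq: Khasminkii- controlled Y variable} satisfies $\xX^{\varepsilon}\Rightarrow \gG^{0,x}(\varphi)$ in $\DD([0,T],\RR^d)$.
\end{itemize}
Condition (C1) is a standard continuity statement for the skeleton ODE \eqref{eq: controlled ODE}; with the Lipschitz properties of $\bar a$ (Proposition \ref{proposition: bar a is Lipschitz continuous}) and $c$, together with the compactness of $\mathbb{S}^M$ inherited from $\eE_T$ and Hypothesis \ref{condition: the measure}, one obtains it by a Gronwall argument combined with the usual uniform integrability estimates on $\int c(\cdot,z)(g_n-1)\nu(dz)$ enjoyed by elements of $\mathbb{S}^M$.

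The heart of the proof is the verification of (C2), and here the two-scale structure is essential. The idea is to replace $\xX^{\varepsilon}$ by the decoupled averaged controlled process $\bar\xX^{\varepsilon}$ defined in \eqref{eq: Khasminkii- controlled averaged variable}, and prove the \emph{controlled averaging principle}
\begin{equation*}
\displaystyle\lim_{\varepsilon\to 0}\bar\PP\Big(\sup_{t\in[0,T]}|\xX^{\varepsilon}_t-\bar\xX^{\varepsilon}_t|>\delta\Big)=0\qquad\text{for every }\delta>0.
\end{equation*}
Slutsky's theorem then transfers any weak limit from $\bar\xX^{\varepsilon}$ to $\xX^{\varepsilon}$, and since $\bar\xX^{\varepsilon}$ is driven only by $\bar a$ and the control (its coefficients are independent of $\yY^{\varepsilon}$), a direct tightness + identification argument (analogous to the one-scale case in \cite{BDM11,BCD13}) gives $\bar\xX^{\varepsilon}\Rightarrow\gG^{0,x}(\varphi)$, using (C1) once more to identify the limit.

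The controlled averaging principle is established by Khasminskii's time-discretization method. Fix a scale $\Delta(\varepsilon)\to 0$ with $\Delta(\varepsilon)/\varepsilon\to\infty$, partition $[0,T]$ into subintervals of length $\Delta(\varepsilon)$, and construct auxiliary processes $\hat\xX^{\varepsilon}$, $\hat\yY^{\varepsilon}$ that on each subinterval $[k\Delta(\varepsilon),(k+1)\Delta(\varepsilon))$ freeze the slow variable at its left endpoint $\xX^{\varepsilon}_{k\Delta(\varepsilon)}$ and let $\hat\yY^{\varepsilon}$ evolve according to the frozen fast SDE \eqref{eq: the SDE for the fast variable with frozen slow component} rescaled by $1/\varepsilon$. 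A-priori bounds (Proposition \ref{prop: a priori bound slow variable}) plus the Lipschitz/dissipativity conditions yield
$\bar\EE\sup_t|\yY^{\varepsilon}_t-\hat\yY^{\varepsilon}_t|^2=O(\Delta(\varepsilon))$ and $\bar\EE\sup_t|\xX^{\varepsilon}_t-\hat\xX^{\varepsilon}_t|^2\to 0$, after which the difference $\xX^{\varepsilon}-\bar\xX^{\varepsilon}$ is controlled through the telescoping sum
\begin{equation*}
\sum_{k}\Big(\int_{k\Delta(\varepsilon)}^{(k+1)\Delta(\varepsilon)}a(\hat\xX^{\varepsilon}_s,\hat\yY^{\varepsilon}_s)\,ds-\Delta(\varepsilon)\,\bar a(\hat\xX^{\varepsilon}_{k\Delta(\varepsilon)})\Big),
\end{equation*}
whose moments are bounded using the ergodic/mixing estimate \eqref{eq: mixing property of the averaged coefficient} of Proposition \ref{proposition: the averaging property for the averaged coefficient} on intervals of length $\Delta(\varepsilon)/\varepsilon\to\infty$. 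A Gronwall argument for the residual terms then closes the estimate.

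The main obstacle is handling the \emph{random controls} $\varphi^{\varepsilon}$ simultaneously with the Khasminskii ergodic averaging: the controlled compensator $\int c(\xX^{\varepsilon},z)(\varphi^{\varepsilon}-1)\nu(dz)$ in \eqref{eq: Khasminki- controlled X variable} and the analogous $h$-term driving $\yY^{\varepsilon}$ need uniform-in-$\varepsilon$ bounds that pass through the mixing estimate for $Y^{x,y}$ (which is stated for the uncontrolled fast dynamics). This is where Hypothesis \ref{condition: the measure} enters decisively: the exponential integrability $\int_{|z|\geq 1}e^{\alpha|z|^2}\nu(dz)<\infty$, together with the variational bound $\eE_T(\varphi^{\varepsilon})\leq M$, produces uniform $L^2(\nu\otimes ds)$-control over $(\varphi^{\varepsilon}-1)$ on sets where $|z|$ is not too large, and uniform $L^1$-control on the complement, allowing the controlled fast dynamics to inherit the dissipativity estimates \eqref{eq: dissipativity on the coefficients}--\eqref{eq: dissipativity- the last one required } with constants uniform in $\varepsilon$ and in the control. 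Once these uniform bounds are in place, the Khasminskii scheme runs as above, (C2) is verified, and the LDP in $\DD([0,T],\RR^d)$ with good rate function $\JJ_{x,T}$ follows from the abstract criterion.
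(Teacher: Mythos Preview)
Your proposal is correct and follows essentially the same route as the paper: the abstract criterion of \cite{BDM11,BCD13}, (C1) via Gronwall on the skeleton ODE, and (C2) by first proving $\bar\xX^{\varepsilon}\Rightarrow\gG^{0,x}(\varphi)$ and then closing the gap $\xX^{\varepsilon}-\bar\xX^{\varepsilon}\to 0$ in probability through a controlled Khasminskii scheme with the ergodic estimate of Proposition~\ref{proposition: the averaging property for the averaged coefficient}, combined via Slutsky. One technical ingredient the paper makes explicit and that you gloss over is a localization step: before running the Khasminskii discretization, the paper introduces stopping times $\tilde\tau^{\varepsilon}_{\rR(\varepsilon)}$ for the exit of $\xX^{\varepsilon}$ and $\bar\xX^{\varepsilon}$ from balls of radius $\rR(\varepsilon)\to\infty$ with $\varepsilon\rR(\varepsilon)^2\to 0$, and uses a Bernstein-type martingale inequality to show $\bar\PP(\tilde\tau^{\varepsilon}_{\rR(\varepsilon)}\le T)\to 0$; all subsequent moment bounds (including the a-priori bounds for the \emph{controlled} pair $(\xX^{\varepsilon},\yY^{\varepsilon})$, which are not those of Proposition~\ref{prop: a priori bound slow variable} but require their own statement) are then carried out on $\{\tilde\tau^{\varepsilon}_{\rR(\varepsilon)}>T\}$.
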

\no We prove Theorem \ref{thm: LDP full process} in Section \ref{section: proof LDP}. \\

\paragraph{Comment:} The reader can appreciate in the development of Section 4 the bypass of the usual exponential tightness between the family $(X^\varepsilon)_{\varepsilon >0}$ given by (\ref{eq: the multiscale system}) and $(\bar X^\varepsilon)_{\varepsilon >0}$ given by (\ref{eq: averaged processes}),  that would be of much harder verification,  with the key establishment of a controlled version of a stochastic averaging principle for the family $(\xX^\varepsilon)_{\varepsilon >0}$ defined in (\ref{eq: controlled averaged SDE}). The establishment of such stochastic averaging principle is the key result to prove Theorem \ref{thm: LDP full process} using the weak convergence approach.

\subsubsection{The first exit time problem in the small noise limit}
\no In this subsection we state the second main result of this work concerning the solution of the Kramers problem for $(X^\varepsilon)_{\varepsilon >0}$. 
\paragraph*{Further assumptions.}
\no We make the additional assumptions on the deterministic dynamical system (\ref{eq: the averaged ode}) and on the measure $\nu \in \MM$ as follows.
\begin{condition}\label{condition: domain}
\begin{enumerate}
\item  Let us consider a bounded domain $D \subset \RR^d$ 
  with $0 \in D$, $\partial D \in \cC^1$ and that $a$ is inward-pointing on $\partial D$, that is, 
\begin{align*}
\langle a(z), n(z) \rangle < 0, \quad \mbox{ for all } z \in \partial D, 
\end{align*}
\item The vector field $\bar a$ satisfies the following for some $c_1>0$ dissipativity condition
\begin{align} \label{eq: further dissipativity}
\langle \bar a(x) - \bar a(\tilde x),x - \tilde x \rangle \leq - c_1 |x- \tilde x|^2, \quad x, \tilde x \in D.
\end{align}
\end{enumerate}
\end{condition}
\begin{remark} \label{rem: on cond dynamics for first exit time}
\begin{enumerate}
\item Hypothesis \ref{condition: hy coefficients existence uniqueness sol} implies that $0 \in \RR^d$ is a critical point of (\ref{eq: the averaged ode}).
\item The assumption (\ref{eq: further dissipativity}) on Hypothesis \ref{condition: domain} implies that $D \bar a(x)$ is strictly negative definite for any $x \in D$. In the case (\ref{eq: the averaged ode}) is a gradient system given by $\bar a = \nabla U$ for some potential $U: \RR^d \longrightarrow [0,\infty)$ this is equivalent to uniform convexity. As a consequence of (\ref{eq: further dissipativity}) it follows that there exists $c_2>0$ such that $e^{- c_2 t} \bar X^{0,x}_t \ra 0$ as $ t \ra \infty$ for any $x \in D$.
 \item Hypothesis \ref{condition: domain} implies that 
the solution of (\ref{eq: the averaged ode}) is 
positive invariant on $\bar D$, that is, for all $x \in \bar D$, we have 
$X^{0, x}_t \in D$ for all $t \gqq 0$ 
and $X^{0, x}_t \rightarrow 0$ as $ t \ra \infty$.  
\end{enumerate}
\end{remark}

\no For every $\varepsilon>0$ the process $\varepsilon \tilde N^{\frac{1}{\varepsilon}}$ 
 is a compensated Poisson random measure defined on $(\bar \MM, \bB(\bar \MM), \bar \PP)$ with 
 compensator given by $ds \otimes \frac{1}{\varepsilon} \nu(dz)$ where $\nu \in \MM$ satisfies 
 the following assumption which replaces Hypothesis \ref{condition: the measure}. 
 \begin{condition} \label{condition:generalized statement- the measure nu} 
The measure $\nu \in \mathfrak{M}$ is non-atomic and satisfies the following conditions.
\begin{itemize}
\item[\textbf{E.1:}] The measure $\nu$ is a L\'{e}vy measure, i.e. $\int_{\RR^d \backslash \{0\}} (1 \wedge |z|^2) \nu(dz)< \infty$.
\item[\textbf{E.2:}] $\nu \in \MM$ satisfies  $\int_{B^c_1(0)} e^{\Gamma |z|^2} \nu(dz) < \infty$ for some $\Gamma>0$.
\item[\textbf{E.3:}] $\nu \in \MM$ is  symmetric.
\end{itemize}
\end{condition}

\paragraph*{Comment:} We refer the reader to \cite{Hogele AO} for a discussion on the assumptions made on the measure $\nu$ in Hypothesis \ref{condition:generalized statement- the measure nu}.

\no The following hypothesis is a continuity assumption on the controlled differential equation (\ref{eq: controlled ODE}) that is essential for solving the first exit time problem.
\begin{condition} \label{condition: generalized statement-potential}  
For every $\rho_0>0$ there exist a constant 
$M>0$ and a non-decreasing function $\xi:[0, \rho_0] \ra \RR^{+}$ 
with $\lim_{\rho \ra 0} \xi(\rho)=0$ satisfying the following. 
For all $x_0, y_0 \in \RR^d$ such that $|x_0- y_0| \lqq \rho_0$ there exist 
$\Phi \in C([0, \xi(\rho_0)],\RR^d)$ and $g \in S^M$ 
such that $\Phi(\xi(\rho_0))=y_0$ and solving
\begin{align} \label{eq: generalized statement-cond on potential: controllability cond C1}
 \Phi(t) = x_0 + \int_0^t \bar a(\Phi(s)) ds + \int_0^t \int_{\RR^d \backslash \{ 0\}} c(\Phi(s),z) (g(s,z)-1) \nu(dz) ds, \quad t \in [0, \xi(\rho_0)].
\end{align}
\end{condition}

\begin{remark} Hypothesis \ref{condition: generalized statement-potential} is satisfied for a general class of L\'{e}vy measures $\nu \in \MM$ that satisfy Hypothesis \ref{condition:generalized statement- the measure nu} under the stricter assumptions that
\begin{itemize}
\item[\textbf{B.1:}] $\nu$ is a finite measure, $\nu(\RR^d \backslash \{0\}) < \infty$.
\item[\textbf{B.2:}] The measure $\nu$ is absolutely continuous with respect to the Lebesgue measure $dz$ 
on the measurable space $(\RR^d \backslash \{0\}, \bB(\RR^d \backslash \{0\}))$ and $\frac{d \nu}{dz}(z) \neq 0$ for 
every $z \in \RR^d \backslash \{0\}$.
\item[\textbf{B.3:}] We have that $\nu$ is symmetric.
\end{itemize}
 \end{remark}
We refer the reader to Proposition 15 in \cite{Hogele AO} where it is explained  how the set of assumptions in Hypothesis \ref{condition: generalized statement-potential} imply the solution of Kramers problem.

\no Given $\e>0$, $x \in D$ and $\nu \in \MM$ satisfying 
Hypotheses \ref{condition: hy coefficients existence uniqueness sol}, \ref{condition: dissipativity}, \ref{condition: domain} and \ref{condition:generalized statement- the measure nu}
we define the first exit time of the slow component $X^{\e,x}$ of (\ref{eq: the multiscale system})
from $D$ 
\begin{align} \label{eq: the first exit time}
\sigma^\e(x) := \inf \{ t \gqq 0 ~|~ X^{\e,x}_t \notin D \}  
\end{align}
and the first exit location $X^{\e, x}_{\sigma^\e(x)}$. 

\no The function $V$ quantifying the cost of shifting the intensity jump measure by a scalar control $g$ and steering $U^g(t;x)$ from its initial position $x$ to some $z\in \RR^d$ in cheapest time is defined as 
 \begin{align} \label{eq: running cost}
 V(x,z) := \displaystyle \inf_{T>0} \inf \Big \{ \JJ_{x,T} (\varphi) ~|~ \varphi \in \DD([0,T], \RR^d): 
 \varphi(T)=z \Big \} \quad \text{ for } x,z \in \RR^d.
 \end{align}
 The function $V(0,z)$ is called the quasi-potential of the stable state $0$ with potential height 
 \begin{align} \label{eq: the potential}
 \bar{V}:= \displaystyle \inf_{z \notin D} V(0,z).
 \end{align}
\no The following result solves the Kramers problem for $(X^\varepsilon)_{\varepsilon>0}$.
\begin{theorem} \label{thm: first exit time}
Let Hypotheses \ref{condition: hy coefficients existence uniqueness sol}, \ref{condition: dissipativity}, \ref{condition: domain}, \ref{condition:generalized statement- the measure nu}  and \ref{condition: generalized statement-potential} be satisfied.
Then $\bar V < \infty$ and we obtain the following result.
\begin{enumerate}
 \item \label{thm: exit time}
For any $x \in D$ and $\delta>0$, we have 
\begin{align} \label{eq: first exit time-limit1}
\displaystyle \lim_{\e \ra 0} \bar \PP \Big ( e^{\frac{\bar V - \delta}{\e}}< \sigma^\e(x) < e^{\frac{\bar V + \delta}{\e}} \Big )=1.
\end{align}
Furthermore, for all $x \in D$ it follows $\displaystyle \lim_{\e \ra 0} \e \ln \bar \EE[\sigma^\e(x)]= \bar V.$

\item \label{thm: location exit}
For any closed set $F \subset D^c$ satisfying $\displaystyle \inf_{z \in F} V(0,z) > \bar V$ 
and any $x \in D$, we have 
 \begin{align} \label{eq: first exit time-limit2}
 \displaystyle \lim_{\e \ra 0} \bar \PP \Big ( X^{\e,x}_{\sigma^\e(x)} \in F \Big )=0.
 \end{align}
In particular, if $\bar V$ is taken by a unique point $z^{*} \in D^c$, 
it follows, for any $x \in D$ and $\delta>0$, that 
\begin{align} \label{eq: first exit time-limit3}
\displaystyle \lim_{\e \ra 0} \bar \PP( |X^{\e,x}_{\sigma^\e(x)} - z^{*}| < \delta )=1.
\end{align}
\end{enumerate}
 \end{theorem}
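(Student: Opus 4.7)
The plan is to mimic the classical Freidlin--Wentzell strategy, transplanting the arguments developed in \cite{Hogele AO} for one-scale pure-jump SDEs into our multi-scale setting via the LDP of Theorem~\ref{thm: LDP full process}. The key structural observation is that the rate function $\JJ_{x,T}$ governing $(X^\e)_{\e>0}$ coincides with the one associated to the averaged, decoupled family $(\bar X^\e)_{\e>0}$ of~\eqref{eq: averaged processes}; consequently the quasi-potential~\eqref{eq: running cost} and the exit height $\bar V$ of~\eqref{eq: the potential} are exactly those of the simpler dynamics $(\bar X^\e)_{\e>0}$, and the entire first-exit analysis can be carried out on the skeleton equation~\eqref{eq: controlled ODE}. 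The inward-pointing property of Hypothesis~\ref{condition: domain}(1) combined with the controllability of Hypothesis~\ref{condition: generalized statement-potential} yields $\bar V<\infty$ exactly as in Proposition~15 of~\cite{Hogele AO}.

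For the upper bound $\sigma^\e(x)<e^{(\bar V+\delta)/\e}$ with high probability I would partition $[0,\infty)$ into blocks of a fixed length $T^{*}$ and, at the end of each block, test whether $X^\e$ has exited $D$. By definition of $\bar V$ and by Hypothesis~\ref{condition: generalized statement-potential} there exists, for every $\delta>0$, a controlled skeleton $\Phi\in C([0,T^{*}],\RR^d)$ with energy $\eE_{T^{*}}(g)<\bar V+\delta/2$ connecting a neighborhood of $0$ to a point outside $\bar D$. The LDP lower bound applied to an open tube around $\Phi$ in the Skorokhod topology then delivers a uniform-in-$\e$ estimate $\bar\PP(\sigma^\e\lqq T^{*})\gqq e^{-(\bar V+\delta)/\e}$ whenever $X^\e$ starts sufficiently close to $0$. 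Between blocks, the averaged dissipativity of Hypothesis~\ref{condition: domain}(2) drags $X^\e$ back towards~$0$ on a unit time-scale with overwhelming probability, so iterating with the strong Markov property at block endpoints and invoking a geometric-tail argument produces both the probabilistic bound and the matching upper bound for $\e\ln\bar\EE[\sigma^\e(x)]$.

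For the lower bound $\sigma^\e(x)>e^{(\bar V-\delta)/\e}$ I would introduce the cycle-based Markov chain approximation. For a small $\rho>0$ define $\tau_0=0$, $\sigma_k=\inf\{t\gqq \tau_k:X^\e_t\in\partial B_\rho(0)\cup\partial D\}$ and $\tau_{k+1}=\inf\{t\gqq \sigma_k:X^\e_t\in\overline{B_{\rho/2}(0)}\cup D^c\}$, and study the embedded chain $(X^\e_{\tau_k})_{k\gqq 0}$. Hypothesis~\ref{condition: domain}(2) ensures that trajectories started on $\partial B_\rho(0)$ are pulled into $B_{\rho/2}(0)$ by the averaged flow in bounded time, so each cycle $\tau_k\to \tau_{k+1}$ has duration of order $\oO(1)$ in probability. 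The LDP upper bound of Theorem~\ref{thm: LDP full process} together with the definition of $V(0,\cdot)$ yields $\bar\PP(X^\e_{\sigma_k}\in\partial D)\lqq e^{-(\bar V-\delta/2)/\e}$ uniformly in the starting point in $\overline{B_{\rho/2}(0)}$; the symmetry of $\nu$ in Hypothesis~\ref{condition:generalized statement- the measure nu}~E.3 is crucial here, as it ensures that excursions ``outward'' and ``back'' carry symmetric cost and removes the need to control the compensator drift of $\e\tilde N^{\frac{1}{\e}}$. A geometric-sum argument on the number of cycles before exit then delivers the desired lower bound in probability and for the expectation.

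The exit-location assertion follows from the same cycle decomposition: for a closed $F\subset D^c$ with $\inf_{z\in F}V(0,z)\gqq \bar V+\eta$, the LDP upper bound gives $\bar\PP(X^\e_{\sigma_k}\in F)\lqq e^{-(\bar V+\eta/2)/\e}$, which is exponentially smaller than the overall per-cycle exit probability $\asymp e^{-\bar V/\e}$, proving~\eqref{eq: first exit time-limit2}; the concentration~\eqref{eq: first exit time-limit3} at the unique minimizer $z^{*}$ is immediate by applying the previous step to $F_\delta=\{z\in\partial D:|z-z^{*}|\gqq \delta\}$. The principal obstacle throughout is the nonlocal, non-gradient structure of~\eqref{eq: controlled ODE}: admissible controls $g(s,z)$ are densities relative to $\nu$ rather than paths in $\RR^d$, so the classical tube estimates around deterministic extremals are unavailable and must be replaced by the abstract reachability encoded in Hypothesis~\ref{condition: generalized statement-potential}, while the symmetry of $\nu$ is what permits the clean cost comparison between trajectories exiting $D$ and those returning to the basin of~$0$.
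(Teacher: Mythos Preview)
Your proposal is correct and follows essentially the same strategy as the paper: Part~(\ref{thm: exit time}) is deferred there verbatim to Theorem~3 of \cite{Hogele AO}, and Part~(\ref{thm: location exit}) is proved via the same cycle-based Markov chain approximation you describe, combined with the exit-time estimate from Part~(\ref{thm: exit time}) and Lemmas corresponding to your ``dragged back to $0$'' and ``per-cycle exit to $F$'' steps.

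One technical point to correct: because $X^\e$ has jumps, your stopping times $\sigma_k,\tau_{k+1}$ cannot be defined as first hitting times of the spheres $\partial B_\rho(0)$ and $\partial D$, since the process may jump over a sphere without touching it and the times would then be infinite even after exit. The paper (following \cite{Hogele AO}) defines the cycles instead via first entrance into the closed sets $\bar B_\rho(0)\cup D^c$ and first exit from a larger ball $\bar B_{\rho'}(0)$ with $\rho'>\rho$; it remarks explicitly that this modification of the classical Gaussian construction is what accommodates the Skorokhod topology. Similarly, for \eqref{eq: first exit time-limit3} the set $F_\delta$ should be taken in $D^c$ rather than on $\partial D$, since the exit location $X^{\e,x}_{\sigma^\e(x)}$ need not lie on the boundary.
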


\no The proof of ( \ref{thm: exit time}) in Theorem \ref{thm: exit time} follows as the proof of Theorem 3 in \cite{Hogele AO}. We present the proof of (\ref{thm: location exit}) from Theorem \ref{thm: exit time} in Subsection \ref{subsection: exit locus}.

\paragraph*{Comment:} We emphasize that the cost functional $V$  and the height potential $\bar V$ given by (\ref{eq: running cost}) and (\ref{eq: the potential}) respectively are written in terms of the good rate function $\JJ$ defined in (\ref{eq: rate function}) that is given by the minimization of the entropy functional $\eE$ over the controlled paths that solve (\ref{eq: controlled ODE}). This is clearly an example of complexity reduction in the solution of the Kramers problem for $(X^\varepsilon)_{\varepsilon>0}$ using the averaged stochastic dynamics of $(\bar X^{\varepsilon})_{\varepsilon>0}$.

\subsection{Examples}
\paragraph*{Strongly tempered exponentially light L\'{e}vy measures.} Hypothesis \ref{condition:generalized statement- the measure nu} covers a wide class of L\'{e}vy measures and we point out the following special benchmark cases. 
\begin{enumerate}
\item Our setting covers the simplest case of finite intensity super-exponentially light jump measures given by $\nu(dz)= e^{-\alpha |z|^2}$ for some $\alpha>1$. For every $\varepsilon>0$ the corresponding stochastic process $L^\varepsilon_t := \int_0^t \int_\XX z \tilde N^{\frac{1}{\varepsilon}}(ds,dz)$, $t \geq 0$ is a compensated compound Poisson process.
\item More generally Hypothesis \ref{condition:generalized statement- the measure nu} covers a class of L\'{e}vy measures that mimics the class of strongly tempered exponentially light measures introduced by Rosi\'{n}ski in \cite{Ros07}, however, with a Gaussian damping in order to satisfy (\ref{eq: integrability condition measure}). For the polar coordinate $r=|z|$ and any $A \in \bB(\XX)$ we define
\begin{align*}
\nu(A) = \int_{\RR^d \backslash \{0\}} \int_0^\infty \textbf{1}_{A}(rz) \frac{e^{- r^2}}{r^{\alpha'+1}}dr R(dz), \quad \alpha' \in (0,2),
\end{align*}
for some measure $R \in \MM$ such that $\int_{\RR^d \backslash \{0\}} |z|^{\alpha'} R(dz)< \infty$. We point out that, for every $\varepsilon>0$, the corresponding L\'{e}vy process $(L^\varepsilon_t)_{t \geq 0}$ differs from the compound Poisson process of the paragraph before not only from the fact that the corresponding jump measure has infinite total mass but also from the fact that although a compound Poisson process with positive jumps has almost surely nondecreasing paths, it does not have paths that are almost surely strictly increasing.
\end{enumerate}
 
 \paragraph*{Invariant measures for the Markov semigroup associated to the fast variable.}
 
 \no For every $\varepsilon>0$, $T>0$, $c>0$, $x \in \RR^d$ and $y \in \RR^k$ let us consider the multi-scale system where the fast variable $(Y^\varepsilon_t)_{t \in [0,T]}$ is decoupled from the slow component $(X^\varepsilon_t)_{t \in [0,T]}$ and is given by
 \begin{align*}
 \begin{cases}
 X^\varepsilon_t &=x + \int_0^t a(X^\varepsilon_s, Y^\varepsilon_s)ds + \varepsilon \int_0^t \int_{\RR^d \backslash \{0\}} c(X^\varepsilon_{s-},z) \tilde N^{\frac{1}{\varepsilon}}(ds,dz); \\
 Y^\varepsilon_t &=y - \frac{c}{\varepsilon} \int_0^t Y^\varepsilon_s ds + L^\varepsilon_t,
 \end{cases}
 \end{align*}
 under Hypotheses \ref{condition: hy coefficients existence uniqueness sol}-\ref{condition: dissipativity}. Here the stochastic process $(L^\varepsilon_t)_{ t \in [0,T]}$ is a pure jump process given by
 \begin{align*}
 L^\varepsilon_t := \int_0^t \int_{\RR^d \backslash \{0\}} z \tilde N^{\frac{1}{\varepsilon}}(ds,dz)
 \end{align*}
 where $\tilde N^{\frac{1}{\varepsilon}}$ is the compensated version of the Poisson random measure defined on $(\MM, \bB(\MM))$ with intensity given by $\frac{1}{\varepsilon} \nu \otimes ds$ for some $\nu \in \MM$ satisfying Hypothesis \ref{condition: the measure}. Therefore due to Theorem 17.5 in \cite{Sato}, freezing $\varepsilon=1$, there exists a unique invariant measure $\mu$ of $(Y^1_t)_{t \in [0,T]}$ given via its Fourier transform by
 \begin{align*}
 \hat \mu(dz) = \exp \Big ( \int_0^\infty \psi(e^{-cs}z) ds \Big )
 \end{align*}
 where $\psi$ (cf. Corollary 2.5. in \cite{Schilling}) is the L\'{e}vy symbol of $(L^1_t)_{t \in [0,T]}$ given by 
 \begin{align*}
 \EE \Big [ \exp(i \langle \xi L^1_t \rangle ) \Big ]= e^{- t \psi(\xi)} \quad \text{ for every } \xi \in \RR^d.
 \end{align*}

\section{Proof of the large deviations principle} \label{section: proof}
\subsection{The weak convergence in a nutshell}
\no The weak convergence approach to large deviations theory builds up in the equivalence between the definition of large deviations principle for a family $(X^\varepsilon)_{\varepsilon>0}$ defined on some common probability space $(\Omega, \fF, \PP)$ with values in a Polish (complete separable) space $E$ and the following definition.
\begin{definition} \label{definition: Laplace-Varadhan}
Let $(X^\varepsilon)_{\varepsilon>0}$ be a family of $E$-valued random variables. The family $(X^\varepsilon)_{\varepsilon>0}$ is said to satisfy the Laplace-Varadhan principle  in $E$ with the good rate function $I:E \longrightarrow [0,\infty]$ if for every $h \in C_b(E)$ the following holds:
\begin{align*}
\displaystyle \limsup_{\varepsilon \ra 0} \varepsilon \ln \EE \Big [ e^{- \frac{1}{\varepsilon} h(X^\varepsilon)} \Big ] & \leq - \displaystyle \inf_{x \in E} \{ h(x)+ I(x) \} \quad \text{and} \\
\displaystyle \liminf_{\varepsilon \ra 0} \varepsilon \ln \EE \Big [ e^{- \frac{1}{\varepsilon} h(X^\varepsilon)} \Big ] & \leq - \displaystyle \inf_{x \in E} \{ h(x)+ I(x) \}.
\end{align*}
\end{definition}
\no For a proof of this equivalence given by Varadhan we refer the reader to \cite{Dupuis Ellis}. 

\no Now,  writing for every $\varepsilon>0$ the shifted measure $\PP^\varepsilon:= \PP \circ (X^\varepsilon)^{-1}$ we have by Donsker-Varadhan's theorem \cite{Dupuis Ellis} 
 that for every $\varepsilon>0$ and every $h \in C_b(E)$ the following variational formula holds
\begin{align*}
- \varepsilon \ln \EE \Big [ e^{-\frac{1}{\varepsilon} h(X^\varepsilon)} \Big ] = \displaystyle \inf_{\QQ \in \pP(E)} \Big \{ h(x)  \QQ(dx) + R(\QQ || \PP^\varepsilon) \Big \}
\end{align*}
where $\pP(E)$ is the set of probability measures defined on $E$ and $R(\QQ || \PP^\varepsilon)$ is the relative entropy of the measure  $\QQ$ with respect to $\PP^\varepsilon$, i.e.
\begin{align*}
R(\QQ || \PP^\varepsilon) := \begin{cases} \int_E \ln \Big ( \frac{d \QQ}{d \PP^\varepsilon} \Big ) d \PP^\varepsilon \quad \text{if } \QQ \ll \PP^\varepsilon; \\
\infty \quad & \text{ otherwise}.
\end{cases}
\end{align*}

\no Due to the mentioned foundational results in order to show a large deviations principle to $(X^\varepsilon)_{\varepsilon>0}$ in the Polish space $E$ one has to show
\begin{align*}
\displaystyle \inf_{\QQ \in \pP(E)} \Big \{ h(x)  \QQ(dx) + R(\QQ || \PP^\varepsilon) \Big \} \ra \displaystyle \inf_{x \in E} \{h (x) +I(x) \}.
\end{align*}


\no We consider the setup of Subsection \ref{subsection: prob setting}.
 Let $\bar \aA$ be the class of $\bar \pP \otimes \bB(\RR^d \backslash \{0\} \ \bB([0,\infty) ) )$ measurable scalar functions (random controls) $\varphi:[0,T] \times \RR^d \MM \longrightarrow [0,\infty)$ and let
\begin{align} \label{eq: Poisson energy}
L_T (\varphi) := \int_0^T \int_{\RR^d \backslash \{0\}} (\varphi \ln \varphi - \varphi +1) \nu(dz) ds.
\end{align}

\no We define the controlled random measure $N^\varphi$ with respect to $\bar N$ under $\bar \PP$:
\begin{align} \label{eq: controlled random measure}
N^\varphi ([0,T] \times A)(\omega) := \int_0^t \int_A \int_0^\infty \textbf{1}_{[0, \varphi(s,z)(\omega)]}(r) \bar N(ds,dz,dr), quad t \geq 0, A \in \bB(\RR^d \backslash \{0\}).
\end{align} 
\no  We refer the interested reader to \cite{BDM11}. In virtue of (\ref{eq: controlled random measure}) we note that, for every $\varepsilon>0$, the Poisson random measure $ N^{\frac{1}{\varepsilon}}$ can be interpreted as a controlled random measure by the scalar control function $\varphi(s,z)(\omega)=\frac{1}{\varepsilon}$. In this particular case the constant scalar control $\frac{1}{\varepsilon}$ is selecting in a non-anticipative way the jumps registered by the Poisson random measure that have intensity at most $\frac{1}{\varepsilon}$.
 
 \no We let $g:[0,T] \times \RR^d \backslash \{0\} \longrightarrow [0,\infty)$ be any measurable function (deterministic control) and $M>0$. We denote $S^M$ the sublevel $M>0$ for the energy (\ref{eq: Poisson energy}), i.e. we say $g \in S^M$ iif $L_T(g) \leq M$. Let $\SSS:= \bigcup_{M \geq 0} S^M$. We identify $g \in S^M$ with the measure $\nu^g \in \MM$ given by
 \begin{align} \label{eq: identification vague convergence}
 \nu^g(A) := \int_A g(s,z) \nu(dz)ds, \quad A \in \bB(\MM).
 \end{align}
Under the identification $S^M \simeq \{ \nu^g ~|~g \in S^M \}$ the space $S^M$ turns to be compact for the topology described by the vague convergence. We refer the reader to \cite{BCD13} for more details. 
 
 \no We write for any $M>0$ 
 \begin{align} \label{eq: random sublevel sets}
 \uU^M := \{ \varphi \in \bar \aA ~|~ \varphi \in S^M \quad \bar \PP-a.s. \}.
 \end{align}
 In \cite{BDM11} the authors prove the following variational formula, for every $F \in M_b(\MM)$ measurable bounded function defined on $\MM$,
 \begin{align*}
 - \ln \bar \EE [e^{F(N^1)}] = \displaystyle \inf_{\varphi \in \aA} \bar \EE [L_T(\varphi) + F(N^\varphi)].
 \end{align*}
 \no With the help of the variational formula given above  in \cite{BDM11},  the authors  introduced a sufficient abstract criteria for large deviations principles.
 \begin{condition} \label{condition: cond LDP}
  For any $\varepsilon>0$ we consider measurable maps $\gG^0: \SSS \longrightarrow \DD([0,T]; \RR^d)$ and $\gG^\varepsilon: \MM \longrightarrow \DD([0,T]; \RR^d)$ such that the following two conditions hold.
 \begin{enumerate}
 \item \textbf{Continuity statement of the limiting map.} Given $M \geq 0$ let $(g_n)_{n \in \NN} \subset S^M$ and $g \in S^M$ such that $\nu^g \rightharpoonup \nu^g$ as $n \ra \infty$ in the vague convergence. Then up to a subsequence it holds
 \begin{align*}
 \gG^0(g_n) \ra \gG^0(g) \quad \text{ as } n \ra \infty.
 \end{align*}
 \item \textbf{Weak law of large numbers for the random maps of shifted noises.} Given $M \geq 0$ let $(\varphi^\varepsilon)_{\varepsilon>0} \subset \uU^M$ and $\varphi \in \uU^M$ such that $\varphi^\varepsilon \Rightarrow\varphi$ as $\varepsilon \ra 0$. Then up to a subsequence the following holds:
 \begin{align*}
 \gG^\varepsilon  \Big ( \varepsilon N^{\frac{1}{\varepsilon} \varphi^\varepsilon} \Big ) \Rightarrow \gG^0(\varphi) \quad \text{ as } \varepsilon \ra 0.
\end{align*}  
 \end{enumerate}
 \end{condition}
 \no For every $\varepsilon>0$ let $Z^\varepsilon:= \gG^\varepsilon \Big( \varepsilon N^{\frac{1}{\varepsilon}} \Big ).$
 The next theorem states that Hypothesis \ref{condition: cond LDP} is sufficient to retrieve the large deviations principle for the family of random variables $(Z^\varepsilon)_{\varepsilon>0}$. We refer the reader to \cite{BDM11}.
 \begin{theorem} \label{theorem: thm LDP abstract}
 Under the Hypothesis \ref{condition: cond LDP} the family $(Z^\varepsilon)_{\varepsilon>0}$ satisfies a large deviations principle in the Skorokhod space $\DD([0,T]; \RR^d)$ with the good rate function
 \begin{align*}
\JJ: \DD([0,T]; \RR^d) & \longrightarrow [0,\infty]  \\
\JJ(\eta) & := \displaystyle \inf_{\gG^0(g)= \eta} L_T(g) \\
&= \inf_{\gG^0(g)= \eta} \int_0^T \int_{\RR^d \backslash \{0\}} (g(s,z) \ln g(s,z) - g(s,z) +1) \nu(dz) ds.
 \end{align*}
 \end{theorem}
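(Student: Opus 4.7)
The plan is to follow the weak-convergence/Laplace-principle route that motivates Hypothesis \ref{condition: cond LDP}. Since $\DD([0,T];\RR^d)$ is Polish, by Bryc's inverse to Varadhan's lemma it suffices to establish goodness of $\JJ$ together with the Laplace principle: for every $h \in C_b(\DD([0,T];\RR^d))$,
\begin{align*}
\lim_{\varepsilon \to 0} -\varepsilon \ln \bar{\EE}\bigl[e^{-h(Z^\varepsilon)/\varepsilon}\bigr] = \inf_{\eta \in \DD([0,T];\RR^d)} \bigl\{h(\eta)+\JJ(\eta)\bigr\}.
\end{align*}
The first step is to apply the variational formula recalled just above Hypothesis \ref{condition: cond LDP}, after the scaling that replaces the reference intensity $\nu \otimes ds$ by $\frac{1}{\varepsilon}\nu \otimes ds$. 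A direct computation (the factor $\varepsilon$ from the rescaled logarithm cancels the factor $\frac{1}{\varepsilon}$ from the intensity in the entropy integrand) produces the controlled representation
\begin{align*}
-\varepsilon \ln \bar{\EE}\bigl[e^{-h(Z^\varepsilon)/\varepsilon}\bigr] = \inf_{\varphi^\varepsilon \in \bar{\aA}} \bar{\EE}\Bigl[L_T(\varphi^\varepsilon) + h\bigl(\gG^\varepsilon(\varepsilon N^{\frac{1}{\varepsilon}\varphi^\varepsilon})\bigr)\Bigr],
\end{align*}
and all the remaining work is to analyse this right-hand side as $\varepsilon \to 0$.

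For the Laplace upper bound, given $\delta>0$ I pick $\eta$ with $h(\eta)+\JJ(\eta) \leq \inf + \delta$ and a deterministic $g \in \SSS$ with $\gG^0(g)=\eta$ and $L_T(g) \leq \JJ(\eta)+\delta$. Using the constant deterministic control $\varphi^\varepsilon \equiv g \in \uU^{L_T(g)}$ in the infimum, part 2 of Hypothesis \ref{condition: cond LDP} (with the trivial weak limit $\varphi = g$) gives $\gG^\varepsilon(\varepsilon N^{\frac{1}{\varepsilon}g}) \Rightarrow \eta$; bounded convergence yields $\limsup \leq L_T(g)+h(\eta) \leq \inf + 2\delta$. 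For the lower bound, pick $\delta$-near-optimal controls $\varphi^\varepsilon \in \bar{\aA}$. Since $\inf \leq \|h\|_\infty$, one has $\bar{\EE}[L_T(\varphi^\varepsilon)] \leq 2\|h\|_\infty+1$ uniformly in $\varepsilon$, and a standard truncation of $\varphi^\varepsilon$ that replaces it by $\varphi^\varepsilon \wedge K$ (pushed outside where $L_T(\varphi^\varepsilon)$ is large) lets me take $\varphi^\varepsilon \in \uU^M$ pathwise for some $M>0$ at a cost of at most $O(\delta)$. Compactness of $S^M$ under the vague topology, through the identification $g \leftrightarrow \nu^g$ in (\ref{eq: identification vague convergence}), gives tightness of $(\varphi^\varepsilon)$; extracting a subsequence with $\varphi^\varepsilon \Rightarrow \varphi \in \uU^M$ and invoking part 2 of Hypothesis \ref{condition: cond LDP}, I obtain $\gG^\varepsilon(\varepsilon N^{\frac{1}{\varepsilon}\varphi^\varepsilon}) \Rightarrow \gG^0(\varphi)$. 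Lower-semicontinuity of $L_T$ on $S^M$ under vague convergence together with Fatou's lemma then yields
\begin{align*}
\liminf_{\varepsilon \to 0}\bar{\EE}\bigl[L_T(\varphi^\varepsilon) + h(\gG^\varepsilon(\varepsilon N^{\frac{1}{\varepsilon}\varphi^\varepsilon}))\bigr] \geq \bar{\EE}\bigl[L_T(\varphi) + h(\gG^0(\varphi))\bigr] \geq \inf_{\eta}\{\JJ(\eta)+h(\eta)\},
\end{align*}
by the very definition of $\JJ$. Goodness of $\JJ$ is immediate from the same ingredients: $\{\JJ \leq a\} = \gG^0(S^a)$ is the image, under the map $\gG^0$ that is continuous on $S^a$ by part 1 of Hypothesis \ref{condition: cond LDP}, of the vaguely compact set $S^a$, hence compact in $\DD([0,T];\RR^d)$.

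The main obstacles are two technical points in the lower bound. First, rigorously justifying the scaled variational representation requires measurable-selection arguments so that infima over random controls $\varphi^\varepsilon$ valued in the infinite-dimensional set $\SSS$ are realized by an $\bar{\fF}_t$-predictable choice; second, the passage from $\varphi^\varepsilon \Rightarrow \varphi$ to the joint weak convergence of the pair $(\varphi^\varepsilon,\gG^\varepsilon(\varepsilon N^{\frac{1}{\varepsilon}\varphi^\varepsilon}))$ requires the Skorokhod representation in the Polish space $S^M$ (vague topology) together with care that the coupling used preserves the joint law triggering Hypothesis \ref{condition: cond LDP}.2. Once these two reductions are secured, everything else is bookkeeping built on the vague compactness of $S^M$ and the continuity/weak-convergence inputs supplied by Hypothesis \ref{condition: cond LDP}.
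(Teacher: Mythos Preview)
The paper does not actually prove this theorem: immediately before the statement it says ``We refer the reader to \cite{BDM11}'', so Theorem~\ref{theorem: thm LDP abstract} is quoted as a black box from Budhiraja--Dupuis--Maroulas. Your sketch is therefore not being compared against a proof in the paper but against the original argument in \cite{BDM11}, and it is essentially that argument: the Laplace-principle reformulation, the scaled variational representation, the deterministic-control upper bound, the near-optimal-control lower bound with compactness of $S^M$ and lower semicontinuity of $L_T$, and goodness via $\{\JJ\le a\}=\gG^0(S^a)$. The two technical obstacles you flag (reduction to controls in $\uU^M$ and joint weak convergence) are precisely the points handled carefully in \cite{BDM11} and the companion \cite{BCD13}; your outline is correct in spirit, with the caveat that the truncation step you describe as ``$\varphi^\varepsilon\wedge K$'' is more delicate than suggested and in the cited references is handled by a somewhat different localisation of the controls.
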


\subsection{Verifying the abstract sufficient criteria.}
\no Let us fix $T>0$, $x \in \RR^d$, $y \in \RR^k$. We assume Hypotheses \ref{condition: hy coefficients existence uniqueness sol},  \ref{condition: dissipativity} and \ref{condition: the measure} to hold.  For every $\varepsilon>0$ let $(X^{\varepsilon,x,y}_t, Y^{\varepsilon,x,y}_t)_{t \in [0,T]}$ be the unique strong solution of (\ref{eq: the multiscale system}) in the sense of Definition \ref{definition: solution SDE}  given by Theorem \ref{theorem: existence uniqueness solution}.  We drop the dependence on $x$ and $y$. For any $\varepsilon>0$, Yamada-Watanabe's theorem ensures the existence of a measurable map $\gG^\varepsilon: \MM \longrightarrow \DD([0,T];\RR^d)$ such that 
\begin{align} \label{eq: Ito maps for the MDP}
X^{\varepsilon}:= \gG^\varepsilon( \varepsilon N^{\frac{1}{\varepsilon}}).
\end{align}
\no The proof of Theorem \ref{thm: LDP full process} consists in checking the conditions (1) and (2) of Hypothesis \ref{condition: cond LDP} for $(\gG^\varepsilon)_{\varepsilon>0}$ and $\gG^0: \mathbb{S}  \longrightarrow C([0,T];\RR^d)$, $\gG^0(g)= U^g$, with $U^g \in C([0,T];\RR^d)$ defined by the skeleton equation (\ref{eq: controlled ODE}). Hence Theorem \ref{theorem: thm LDP abstract} allows us to conclude. 

\subsubsection{The skeleton equations and the compactness condition}

 \no For any $x \in \RR^d$ and $g \in \mathbb{S}$ let us denote by $U^g=U^{g,x} \in \CC([0,T]; \RR^d)$ the unique solution of (\ref{eq: controlled ODE}). By definition we have  
\begin{align*}
\gG^0(g) = U^g.
\end{align*}
\begin{proposition} \label{prop: first condition LDP}
For every $M < \infty$ one has that the set
\begin{align*}
\KK_M := \Big \{ \gG^0  ( g  ) ~|~  g \in \mathbb{S}^M \Big \}
\end{align*}
is compact in $C([0,T]; \RR^d)$.
\end{proposition}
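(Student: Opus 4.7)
The plan is to combine the Arzel\`{a}--Ascoli theorem with a continuity argument for $\gG^0$ under vague convergence on the sublevel set $\mathbb{S}^M$. Specifically, I would first show that $\KK_M$ is relatively compact in $C([0,T];\RR^d)$, and then that $\KK_M$ is closed, by checking that $\gG^0$ is sequentially continuous on $\mathbb{S}^M$ under the identification $g \leftrightarrow \nu^g$ in \eqref{eq: identification vague convergence}, which allows one to exploit the vague compactness of $\mathbb{S}^M$ recalled after that equation.

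For relative compactness the first task is a uniform bound $C_M := \sup_{g \in \mathbb{S}^M}\sup_{t \in [0,T]} |U^g(t)| < \infty$. From \eqref{eq: controlled ODE}, Proposition \ref{proposition: bar a is Lipschitz continuous} gives $|\bar a(y)| \leq K(1+|y|)$, while Hypothesis \ref{condition: hy coefficients existence uniqueness sol} yields $\int_{\RR^d \setminus \{0\}} |c(x,z)|\nu(dz) \leq \|c(0,\cdot)\|_{L^1(\nu)} + L|x|$. For the nonlocal perturbation the key tool is the Young-type inequality for the relative entropy,
\begin{align*}
ab \leq \frac{1}{\sigma}(b \log b - b + 1) + \frac{1}{\sigma}(e^{\sigma a} - 1), \qquad a,b \geq 0,\ \sigma > 0,
\end{align*}
which combined with the exponential integrability \eqref{eq: integrability condition measure} of $\nu$ (for $\sigma$ small enough) delivers a bound
\begin{align*}
\int_0^t \int_{\RR^d \setminus \{0\}} |c(U^g(s),z)|\,|g(s,z)-1| \nu(dz)\, ds \leq C(M)\Bigl(1 + \int_0^t |U^g(s)|\, ds\Bigr),
\end{align*}
uniform in $g \in \mathbb{S}^M$. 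Gr\"{o}nwall's inequality then produces $C_M$. Equicontinuity of $\{U^g : g \in \mathbb{S}^M\}$ follows from the same estimates applied to the increments $U^g(t)-U^g(s)$, the nonlocal contribution being uniformly absolutely continuous in $t$ via the same Young-type bound. Arzel\`{a}--Ascoli delivers relative compactness.

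For closedness, let $(g_n)_n \subset \mathbb{S}^M$ be such that $U^{g_n} \to \varphi$ in $C([0,T];\RR^d)$. Passing to a subsequence, vague compactness of $\mathbb{S}^M$ provides $g \in \mathbb{S}^M$ with $\nu^{g_n} \rightharpoonup \nu^g$. It remains to pass to the limit in the skeleton equation \eqref{eq: controlled ODE}. The drift term converges by continuity of $\bar a$ and uniform convergence $U^{g_n} \to \varphi$. For the nonlocal term, decompose
\begin{align*}
\int_0^t \int_{\RR^d \setminus \{0\}} c(U^{g_n},z)(g_n - 1)\nu(dz)\, ds &= \int_0^t \int_{\RR^d \setminus \{0\}} [c(U^{g_n},z)-c(\varphi,z)](g_n - 1)\, \nu(dz) \,ds \\
&\quad + \int_0^t \int_{\RR^d \setminus \{0\}} c(\varphi,z)(g_n - 1)\,\nu(dz) \,ds.
\end{align*}
The first summand vanishes by the Lipschitz estimate $\int |c(U^{g_n},z)-c(\varphi,z)|\nu(dz) \leq L\|U^{g_n}-\varphi\|_\infty$ from Hypothesis \ref{condition: hy coefficients existence uniqueness sol} together with uniform integrability of $|g_n-1|\nu \otimes ds$; the second converges to $\int_0^t \int c(\varphi,z)(g-1)\,\nu(dz)\,ds$ by approximating $c(\varphi,\cdot)$ uniformly on compacts of $\RR^d \setminus \{0\}$ by functions in $C_c(\RR^d \setminus \{0\})$, to which vague convergence applies, and using \eqref{eq: integrability condition measure} together with $\eE_T(g_n) \leq M$ to dispatch the tails. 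This yields $\varphi = U^g \in \KK_M$, proving closedness.

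The main obstacle will be this last passage to the limit: vague convergence of $\nu^{g_n}$ only provides convergence against $C_c(\RR^d \setminus \{0\})$, whereas $c(\varphi(s),\cdot)$ is neither bounded nor compactly supported in $z$. Upgrading the convergence requires coupling the exponential moment assumption \eqref{eq: integrability condition measure} on $\nu$ with the entropy bound $\eE_T(g_n) \leq M$, through the Young-type inequality above, to establish a uniform integrability property for the family $\{|g_n - 1|\nu \otimes ds\}$ on $[0,T] \times \RR^d \setminus \{0\}$. This is precisely the kind of argument developed in detail in \cite{BDM11, BCD13} in the Poisson large deviations framework, and it is the technical heart of the proof.
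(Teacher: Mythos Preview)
Your proposal is correct and follows the standard route in this literature: Arzel\`a--Ascoli for relative compactness via the entropy--exponential-moment Young inequality, followed by sequential continuity of $\gG^0$ on the vaguely compact sublevel set $\mathbb{S}^M$ to get closedness. The paper itself omits the proof entirely, deferring to Proposition~11 in \cite{Hogele AO}, where precisely this argument is carried out; your outline matches that reference and the analogous results in \cite{BDM11, BCD13} that you cite.
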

\no The proof is straightforward and we omit it. For details we refer the reader to Proposition 11 in \cite{Hogele AO}.
\begin{rem} \label{remark: the first condition MDP}
 Proposition \ref{prop: first condition LDP} can be rewritten as follows. Fix $0 \leq M < \infty$. 
 Let $(g_n)_{n \in \NN} \subset S^M$ such that $g_n \rightharpoonup g$ in the topology of the vague convergence as $n \ra \infty$ under the identification $\mathbb{S}^M \simeq \{ \nu^g ~|~g \in \mathbb{S}^M\}$ given above with $\nu^g$defined in (\ref{eq: identification vague convergence}) for any $g \in \mathbb{S}$. Therefore
\begin{align*}
\gG^0(g_n) \ra \gG^0(g) \quad \text{ as } n \ra \infty \quad \text{ in the uniform topology}.
\end{align*}
This form of restating Proposition \ref{prop: first condition LDP} corresponds to the verification of the  first condition in Hypothesis \ref{condition: cond LDP} for $\gG^0$.
\end{rem}

\subsubsection{Strategy of the proof of the second condition in Hypothesis \ref{condition: cond LDP}.}

\no In order to apply Theorem \ref{theorem: thm LDP abstract} and conclude the large deviations principle given in Theorem \ref{thm: LDP full process} we proceed verifying the second condition in Hypothesis \ref{condition: cond LDP} for  $\gG^0$ and the family $\{ \gG^\varepsilon: \MM \longrightarrow \DD([0,T]; \RR^d)\}_{\varepsilon>0}$. For every $M>0$ recall the random sublevel set $\uU^M$  given by (\ref{eq: random sublevel sets}) and for every $\varepsilon>0$ let $\varphi^\varepsilon \in \uU^M$. Set $\tilde \varphi^\varepsilon= \frac{1}{\varphi^\varepsilon}$. The definition of $\tilde \varphi^\varepsilon$ makes sense since one has that $\bar \PP$-a.s.  $\varphi^\varepsilon \in \aA_b$ holds.  For any $t \in [0,T]$ we define the $\FF$-martingale 
\begin{align*}
\eE(\tilde \varphi^\varepsilon)(t) &:= \exp \Big ( \int_0^t \int_{\RR^d \backslash \{0\}} \int_0^{\frac{1}{\varepsilon}} \ln \tilde \varphi^\varepsilon(s,z) \bar N(ds,dz,dr) \Big ) \\
& + \int_0^t \int_{\RR^d \backslash \{0\}} \int_0^{\frac{1}{\varepsilon}} (- \tilde \varphi^\varepsilon(s,z)+1) ds \nu(dz) dr \Big ).
\end{align*}
\no  Girsanov's theorem stated in the form of Theorem III.3.24 of \cite{Jacod Shiryaev} ensures that $(\eE(\varphi^\varepsilon)(t))_{t \in [0,T]}$ is an $\FF$-martingale. Hence the probability measures defined on $(\bar \MM, \bB(\bar \MM))$ by
\begin{align*}
\QQ^\varepsilon_T(G) := \int_G  \eE(\varphi^\varepsilon)(T) d \bar \PP, \quad \text{ for all } G \in \bB(\bar \MM)
\end{align*}
are absolutely continuous with respect to $\bar \PP$. Under $\QQ^\varepsilon_T$ the stochastic process 
 $\varepsilon N^{\frac{1}{\varepsilon} \varphi^\varepsilon}$ is a random measure with the same law of $\varepsilon N^{\frac{1}{\varepsilon}}$ under $\bar \PP$. We recall that
\begin{align*}
N^{\frac{1}{\varepsilon} \varphi^\varepsilon}((0,t] \times U) := \int_0^t \int_U \int_0^\infty \textbf{1}_{[0, \frac{1}{\varepsilon} \varphi^\varepsilon]}(r) \bar N(ds,dz,dr), \quad t \geq 0, U \in \bB(\RR^d \backslash \{0\}).
\end{align*} 
\no  For any $(x,y) \in \RR^d \times \RR^k$, $M>0$, $\varepsilon>0$ and $\varphi^\varepsilon \in \uU^M$ we define the slow controlled process $(\xX^\varepsilon(t))_{t \in [0,T]}$ and the fast controlled process $(\yY^\varepsilon(t))_{t \in [0,T]}$ given as the strong solutions with respect to $\bar \PP$ (since $\QQ^\varepsilon_T \ll \bar \PP$) of the following stochastic differential system of equations
\begin{align} \label{eq: controlled SDEs multiscale}
\begin{cases}
\xX^\varepsilon_t &= x + \displaystyle \int_0^t \Big ( a(\xX^\varepsilon_s, \yY^\varepsilon_s) + \int_{\RR^d \backslash \{0\}} c(\xX^\varepsilon_s,z)(\varphi^\varepsilon(s,z)-1) \nu(dz) \Big )ds + \varepsilon \int_0^t c(\xX^\varepsilon_{s-},z) \tilde N^{\frac{\varphi^\varepsilon}{\varepsilon}}(ds,dz) \\
\yY^\varepsilon_t &= y +  \displaystyle \frac{1}{\varepsilon} \int_0^t \Big ( f(\xX^\varepsilon_s, \yY^\varepsilon_s) + \int_{\RR^d \backslash \{0\}} h(\xX^\varepsilon_s, \yY^\varepsilon_s,z) (\varphi^\varepsilon(s,z)-1) \nu(dz) \Big )ds + \int_0^t h(\xX^\varepsilon_{s-}, \yY^\varepsilon_{s-},z) \tilde N^{\frac{\varphi^\varepsilon}{\varepsilon}}(ds,dz).
\end{cases}
\end{align}

\no For every $T>0$, $x \in \RR^d$, $M>0$, $\varepsilon>0$ and $\varphi^\varepsilon \in \uU^M$ we define $(\bar \xX^\varepsilon(t))_{t \in [0,T]}$ the fast averaged controlled process as the strong solution under $\bar \PP$ of the controlled stochastic differential equation 
\begin{align} \label{eq: controlled averaged SDE}
\bar \xX^\varepsilon_t &=x + \int_0^t \Big ( \bar  a (\xX^\varepsilon_s) + \int_{\RR^d \backslash \{0\}} c(\xX^\varepsilon_s,z)(\varphi^\varepsilon(s,z)-1)\Big )ds + \varepsilon \int_0^t c(\bar \xX^\varepsilon_{s-},z) \tilde N^{\frac{\varphi^\varepsilon}{\varepsilon}}(ds,dz)
\end{align}.

\no For every $\varepsilon>0$ and $M>0$ let $(\varphi^\varepsilon)_{\varepsilon>0} \subset \uU^M$ such that $\varphi^\varepsilon \Rightarrow \varphi$ as $\varepsilon \ra 0$ in the vague convergence topology in $S^M$. The conclusion in the second statement in Hypothesis \ref{condition: cond LDP} for $(\gG^\varepsilon)_{\varepsilon>0}$ and $\gG^0$ reads as $\xX^\varepsilon \Rightarrow \bar X$, as $\varepsilon \ra 0$ in law where $\bar X \in  \CC([0,T]; \RR^d)$ solves uniquely
\begin{align} \label{eq: the controlled averaged eq for LDP}
\bar X^\varphi(t)&= \int_0^t \bar a(\bar X(s))ds + \int_0^t \int_{\RR^d \backslash \{0\}} c(\bar X(s),z)(\varphi(s,z)-1) \nu(dz)ds 
\end{align}
In order to prove that $\xX^\varepsilon \Rightarrow \bar X$, as $\varepsilon \ra 0$ we proceed as follows.
\begin{itemize}
\item[1.] This step passes through two intermediary tasks. Firstly one shows that the laws of $(\bar \xX^\varepsilon)_{\varepsilon>0}$ are tight in $\pP(C([0,T]; \RR^d))$ (since compact sets in the topology generated by the uniform convergence are also compact sets in the Skorokhod topology). Then it follows that there exists $\tilde \xX \in C([0,T];\RR^d)$ such that $\bar \xX^\varepsilon \Rightarrow \tilde \xX$ as $\varepsilon \ra 0$. Passing to the pointwise limit in the equation (\ref{eq: controlled averaged SDE}) satisfied by $\bar \xX^\varepsilon$ and due to the uniqueness of solution of (\ref{eq: the controlled averaged eq for LDP}) we conclude that $\tilde \xX= \bar X^\varphi$.
\item[2.] We prove the following strong (controlled) averaging principle:
\begin{align} \label{eq: controlled averaging principle}
\displaystyle \lim_{\varepsilon \ra 0} \bar \PP \Big ( \displaystyle \sup_{t \in [0,T]} |\xX^\varepsilon(t) - \bar \xX^\varepsilon(t)| > \delta\Big )=0, \quad \text{ for any } \delta>0.
\end{align}
From the limit above and Theorem 4.1. in \cite{Billingsley}, commonly known as Slutzsky's theorem, we can identify $\bar \xX$ as the weak limit of $(\xX^\varepsilon)_{\varepsilon}$ as $\varepsilon \ra 0$.
\end{itemize}

\no The first point in the strategy announced above is proved in the following proposition. The proof follows analogous as the proof of Proposition 12 in \cite{Hogele AO}.
\begin{proposition} \label{prop: second condition LDP} Given $M>0$ and $\varepsilon>0$ let $\varphi^\varepsilon \in \uU^M$ such that $\varphi^\varepsilon \Rightarrow \varphi$ as $\varepsilon \ra 0$ in the vague convergence in $S^M$. Therefore $\gG^0(\varphi)= \bar X^\varphi$ is a limit point in law of $\gG^\varepsilon(\varepsilon N^{\frac{\varphi^\varepsilon}{\varepsilon}})$ in $\DD([0,T];\RR^d)$.
\end{proposition}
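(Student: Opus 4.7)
The claim is $\xX^\varepsilon \Rightarrow \bar X^\varphi$ in $\DD([0,T]; \RR^d)$, where $\bar X^\varphi$ is the unique solution of \eqref{eq: the controlled averaged eq for LDP}. I follow the two-step strategy sketched just above the statement: combine (i) the weak convergence of the averaged controlled process $\bar{\xX}^\varepsilon \Rightarrow \bar X^\varphi$ with (ii) the controlled averaging estimate \eqref{eq: controlled averaging principle}. Together these yield $\xX^\varepsilon \Rightarrow \bar X^\varphi$ by Slutzky's theorem (Theorem 4.1 in \cite{Billingsley}), and hence $\gG^0(\varphi) = \bar X^\varphi$ is identified as a limit point in law of $\gG^\varepsilon(\varepsilon N^{\varphi^\varepsilon/\varepsilon})$.

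\textbf{Step (i): $\bar{\xX}^\varepsilon \Rightarrow \bar X^\varphi$ in $C([0,T]; \RR^d)$.} I begin with tightness via the Aldous--Kurtz criterion. The Lipschitz property of $\bar a$ from Proposition \ref{proposition: bar a is Lipschitz continuous} and the $\nu$-Lipschitz bound on $c$ from Hypothesis \ref{condition: hy coefficients existence uniqueness sol}, combined with the elementary inequality $ab \leq e^{\sigma a} + \sigma^{-1}(b \ln b - b + 1)$ and the entropy bound $\eE_T(\varphi^\varepsilon) \leq M$ uniform in $\varepsilon$, provide uniform $L^1$ control of the control-dependent drift $\int_0^t\!\int c(\bar{\xX}^\varepsilon_s,z)(\varphi^\varepsilon(s,z)-1)\nu(dz)\,ds$. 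The pure-jump martingale $\varepsilon \int_0^\cdot \!\int c(\bar{\xX}^\varepsilon_{s-},z)\,\tilde N^{\varphi^\varepsilon/\varepsilon}(ds,dz)$ has a quadratic variation of order $O(\varepsilon)$ by Itô's isometry and the exponential integrability \eqref{eq: integrability condition measure}. Any subsequential limit $\tilde{\xX}$ therefore belongs to $C([0,T]; \RR^d)$. To pass to the limit in \eqref{eq: controlled averaged SDE}, I use the Lipschitz bounds to replace $\bar{\xX}^\varepsilon$ by $\tilde{\xX}$ inside the integrand up to a vanishing error, and then exploit the vague convergence $\nu^{\varphi^\varepsilon} \rightharpoonup \nu^\varphi$ (in the form of Remark \ref{remark: the first condition MDP}) applied to the bounded continuous integrand $c(\tilde{\xX}(s),z)$. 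Uniqueness of \eqref{eq: the controlled averaged eq for LDP} then forces $\tilde{\xX} = \bar X^\varphi$, and the convergence holds along the full sequence.

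\textbf{Step (ii): the controlled averaging principle.} This is the heart of the argument and follows the classical Khasminskii discretization adapted to the controlled setting as in \cite{Xu, BDG18}. Partition $[0,T]$ into subintervals of length $\Delta(\varepsilon)$ with $\Delta(\varepsilon) \to 0$ and $\Delta(\varepsilon)/\varepsilon \to \infty$ (a convenient choice is $\Delta(\varepsilon) = \sqrt{\varepsilon}$). Introduce auxiliary processes $(\hat{\xX}^\varepsilon, \hat{\yY}^\varepsilon)$ by freezing the slow variable at its value $\xX^\varepsilon_{k\Delta(\varepsilon)}$ on each interval $[k\Delta(\varepsilon), (k+1)\Delta(\varepsilon)]$, so that $\hat{\yY}^\varepsilon$ is a (controlled) version of \eqref{eq: the SDE for the fast variable with frozen slow component}. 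Write
\begin{align*}
\xX^\varepsilon_t - \bar{\xX}^\varepsilon_t &= \int_0^t \bigl(a(\xX^\varepsilon_s, \yY^\varepsilon_s) - a(\hat{\xX}^\varepsilon_s, \hat{\yY}^\varepsilon_s)\bigr) ds + \int_0^t \bigl(a(\hat{\xX}^\varepsilon_s, \hat{\yY}^\varepsilon_s) - \bar a(\hat{\xX}^\varepsilon_s)\bigr) ds \\
&\quad + \int_0^t \bigl(\bar a(\hat{\xX}^\varepsilon_s) - \bar a(\bar{\xX}^\varepsilon_s)\bigr) ds + R^\varepsilon(t),
\end{align*}
where $R^\varepsilon$ collects the control and martingale discrepancies, and apply Gronwall's inequality together with the dissipativity of Hypothesis \ref{condition: dissipativity}. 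The first and third terms are absorbed by Lipschitz regularity; the ergodic middle term is estimated by Proposition \ref{proposition: the averaging property for the averaged coefficient} applied conditionally on $\fF_{k\Delta(\varepsilon)}$ interval by interval, yielding a bound of order $T\alpha(\Delta(\varepsilon)/\varepsilon) \to 0$. The remainder $R^\varepsilon$ is handled by uniform $L^2$ moment estimates for $(\xX^\varepsilon, \yY^\varepsilon)$ on $\uU^M$ derived from the exponential integrability \eqref{eq: integrability condition measure} and the variational inequality converting entropy bounds to $L^p$ control.

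\textbf{Main obstacle.} The principal difficulty is that the random control $\varphi^\varepsilon$ couples the slow and fast dynamics in \eqref{eq: controlled SDEs multiscale} and, in particular, destroys the time-homogeneous Markov property of $\yY^\varepsilon$ that underlies the standard Khasminskii averaging machinery. The remedy is to condition on $\fF_{k\Delta(\varepsilon)}$: on each subinterval $\varphi^\varepsilon$ is then treated as an exogenous predictable control in $\uU^M$, and the ergodic averaging property of Proposition \ref{proposition: the averaging property for the averaged coefficient} can be applied pathwise to the frozen-slow-variable dynamics. Uniformity over $\varphi^\varepsilon \in \uU^M$ of all auxiliary moment and ergodic estimates, which is what ultimately makes Gronwall usable here, is the technical price paid for the presence of the control.
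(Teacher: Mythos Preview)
Your overall approach is correct and follows the paper's strategy, but you are proving more than what the paper assigns to this particular proposition. In the paper, Proposition~\ref{prop: second condition LDP} covers \emph{only} your Step~(i): the sentence immediately preceding it states that it proves ``the first point in the strategy,'' namely $\bar\xX^\varepsilon \Rightarrow \bar X^\varphi$, and the proof is given by direct reference to Proposition~12 in \cite{Hogele AO} (where the dynamics is already decoupled from a fast variable, so the argument is the standard tightness plus limit identification you outline). Your Step~(ii), the controlled averaging principle \eqref{eq: controlled averaging principle}, is treated separately in the paper: it is the content of Lemmas~\ref{lemma: Khasminkii segment process estimate}--\ref{lemma: Khasminkki}, Propositions~\ref{prop: Khasminkii averaging controlled-part1}--\ref{prop: Khasminkii averaging controlled-part2}, and Theorem~\ref{thm: controlled averaging principle}. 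The Slutzky combination you describe is then done in Section~\ref{section: proof LDP}. So your proposal is a correct compression of Proposition~\ref{prop: second condition LDP} together with Theorem~\ref{thm: controlled averaging principle}, rather than a proof of the proposition in isolation.

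On the technical side, one ingredient the paper uses that you do not mention is a localization step: the a~priori bounds for $(\xX^\varepsilon,\yY^\varepsilon)$ in Proposition~\ref{prop: a priori bounds controlled processes} are stated only up to the stopping time $\tilde\tau^\varepsilon_{\rR(\varepsilon)}$ of \eqref{eq: stopping time}, and Proposition~\ref{prop: localization} (a Bernstein-type estimate) is used to show $\bar\PP(\tilde\tau^\varepsilon_{\rR(\varepsilon)}\le T)\to 0$. Without this localization the uniform-in-$\varepsilon$ second-moment control of the controlled fast variable $\yY^\varepsilon$ that your Gronwall argument needs is not available directly. Your choice $\Delta(\varepsilon)=\sqrt{\varepsilon}$ is fine for the required asymptotics $\Delta\to 0$, $\Delta/\varepsilon\to\infty$; the paper takes $\Delta(\varepsilon)=\varepsilon^\gamma|\ln\varepsilon|^p$ with $\gamma\in(0,\tfrac12)$, but this is inessential.
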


\subsubsection{The controlled averaging principle}
\no In order to conclude one has resumed to prove (\ref{eq: controlled averaging principle}). For that purpose we use a localization technique based in the following useful result. The proof uses a Bernstein-type inequality for c\`{a}dl\'{a}g martingales given in \cite{DZ01} and the proof follows analogously as the proof of Proposition 9 in \cite{Hogele AO}.
\begin{proposition} \label{prop: localization}
Let the hypotheses of Theorem \ref{thm: LDP full process} be satisfied. For every $M>0$, $(\varphi^\varepsilon)_{\varepsilon>0} \subset \uU^M$, any function $\rR:(0,1] \longrightarrow (0,\infty)$ such that $\rR(\varepsilon) \ra \infty$ as $\varepsilon \rR^2(\varepsilon) \ra 0$ as $\varepsilon \ra 0$, $x \in \RR^d$ and $T>0$ we have that there exists some $\varepsilon_0 \in (0,1)$ and $C>0$ such that for every $\varepsilon < \varepsilon_0$ the following holds
\begin{align} \label{eq: localization}
\bar \PP \Big ( \displaystyle \sup_{t \in [0,T]} |\xX^\varepsilon_t| \vee |\bar \xX^\varepsilon_t| > \rR(\varepsilon)\Big ) \leq 2 e^{- \frac{1}{2} \rR(\varepsilon)} + C \varepsilon \rR(\varepsilon).
\end{align}
\end{proposition}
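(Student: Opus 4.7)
The plan is to establish the estimate for $\xX^\varepsilon$ and to conclude by an identical argument for $\bar \xX^\varepsilon$; the averaged controlled equation (\ref{eq: controlled averaged SDE}) has the same structural ingredients as (\ref{eq: controlled SDEs multiscale}) with $\bar a$ replacing $a(\cdot, \yY^\varepsilon)$, and the dissipativity of $\bar a$ is inherited from Hypothesis \ref{condition: dissipativity} through the averaging (\ref{eq: averaged coefficient}). A union bound over the two processes then produces the factor $2$ in the exponential term.

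First I would decompose the controlled slow component as $\xX^\varepsilon_t = x + D^\varepsilon_t + \varepsilon M^\varepsilon_t$, where
\[
D^\varepsilon_t = \int_0^t \left( a(\xX^\varepsilon_s, \yY^\varepsilon_s) + \int_{\RR^d \setminus \{0\}} c(\xX^\varepsilon_s, z)(\varphi^\varepsilon(s,z)-1)\nu(dz)\right) ds
\]
is the predictable drift (including the controlled compensator shift) and $\varepsilon M^\varepsilon_t = \varepsilon \int_0^t \int c(\xX^\varepsilon_{s-}, z)\, \tilde N^{\varphi^\varepsilon/\varepsilon}(ds, dz)$ is a purely discontinuous $\bar \PP$-martingale. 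Hypothesis \ref{condition: hy coefficients existence uniqueness sol}, together with the dissipativity estimate (\ref{eq: dissipativity of the mixing coefficient}) and the $L^1(\nu)$-Lipschitz bound on $c$, yields a pathwise linear-in-$|\xX^\varepsilon_s|$ control of the first piece of $D^\varepsilon_t$. The controlled shift is handled by the pointwise Young-type inequality
\[
uv \leq \delta(v \ln v - v + 1) + \delta(e^{u/\delta} - 1), \quad u, v \geq 0,
\]
applied with $v = \varphi^\varepsilon(s,z)$ and $u \propto |c(\xX^\varepsilon_s, z)|$; integrating against $\nu \otimes ds$ and invoking the entropy bound $\varphi^\varepsilon \in \uU^M$ and the exponential integrability Hypothesis \ref{condition: the measure} produces an a priori estimate $\sup_{t \leq T}|D^\varepsilon_t| \lesssim 1 + \sup_{s \leq T}|\xX^\varepsilon_s|$. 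The $C\varepsilon\rR(\varepsilon)$ contribution in (\ref{eq: localization}) arises from a Markov estimate on the compensated part of $D^\varepsilon$ (whose $L^1$-mass is of order $\varepsilon$), combined with the a priori $L^2$-bound of Proposition \ref{prop: a priori bound slow variable}.

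For the martingale piece I would apply the Bernstein-type inequality for c\`adl\`ag martingales from \cite{DZ01}. The predictable bracket is dominated by $\langle \varepsilon M^\varepsilon\rangle_t \lesssim \varepsilon \int_0^t \int |c(\xX^\varepsilon_s,z)|^2 \varphi^\varepsilon(s,z)\nu(dz)\,ds$, which under the entropy constraint and Hypothesis \ref{condition: the measure} is of order $\varepsilon(1 + \rR(\varepsilon)^2)$ on the stopped event $\{\sup_{s \leq t}|\xX^\varepsilon_s| \leq \rR(\varepsilon)\}$, while individual jumps are $\mathcal{O}(\varepsilon \rR(\varepsilon))$. Inserting these into Bernstein and using $\varepsilon \rR^2(\varepsilon) \to 0$ produces the exponential bound of order $e^{-\rR(\varepsilon)/2}$.

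The main obstacle is the circularity between the bound on $\langle \varepsilon M^\varepsilon\rangle_t$ (which requires an a priori $L^2$-control on $\xX^\varepsilon$) and the stopping threshold $\rR(\varepsilon)$. One resolves it by working on the stopped process $\xX^\varepsilon_{\cdot \wedge \tau^\varepsilon}$ with $\tau^\varepsilon = \inf\{t : |\xX^\varepsilon_t| \vee |\bar \xX^\varepsilon_t| \geq \rR(\varepsilon)\}$, deriving the Bernstein estimate there, and then recognizing $\bar \PP(\tau^\varepsilon \leq T)$ as precisely the left-hand side of (\ref{eq: localization}). This is the loop closed in the proof of Proposition 9 of \cite{Hogele AO}, whose template is imported here.
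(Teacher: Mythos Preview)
Your proposal is correct and follows essentially the same approach the paper indicates: the paper does not spell out a proof but states that it ``uses a Bernstein-type inequality for c\`adl\`ag martingales given in \cite{DZ01} and the proof follows analogously as the proof of Proposition 9 in \cite{Hogele AO},'' which is precisely the template you have sketched (drift/martingale decomposition, entropy control on the shifted compensator, Bernstein on the stopped martingale). One minor point: you cite Proposition \ref{prop: a priori bound slow variable} for the $L^2$-bound, but that result is for the \emph{uncontrolled} process $(X^\varepsilon,Y^\varepsilon)$; the controlled a priori bounds (Proposition \ref{prop: a priori bounds controlled processes}) come after and depend on the present localization, so the argument must be self-contained via the stopping-time loop you describe rather than via an external moment estimate.
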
 
\no For every $\varepsilon>0$ and any $\rR:(0,1] \longrightarrow (0,\infty)$ satisfying the limits $\rR(\varepsilon) \ra \infty$ and $\varepsilon \rR^2(\varepsilon) \ra 0$ as $\varepsilon \ra 0$ such as in the statement of Proposition \ref{prop: localization} let us define the $\FF$-stopping time
\begin{align} \label{eq: stopping time}
\tilde \tau^\varepsilon_{\rR(\varepsilon)} := \displaystyle \inf \{ t \geq 0 ~|~ \xX^\varepsilon_t  \notin B_{\rR(\varepsilon)}(0)\} \wedge  \displaystyle \inf \{ t \geq 0 ~|~ \bar \xX^\varepsilon_t  \notin B_{\rR(\varepsilon)}(0) \} \wedge T.
\end{align}

\no The following a-priori bounds are used in the sequel for the proof of the controlled averaging principle (\ref{eq: controlled averaging principle}). The proof is straightforward and follows from several applications of the Ito's formula and the Burkholder-Dqvis-Gundy's inequalities.

\begin{proposition} \label{prop: a priori bounds controlled processes}
Let $M>0$. Fix a function $\rR:(0,1] \longrightarrow [0,\infty)$ satisfying the assumptions of Proposition \ref{prop: localization} and for every $\varepsilon>0$ let $\tilde \tau^\varepsilon_{\rR(\varepsilon)}$ be defined in (\ref{eq: stopping time}). Under the assumptions of Theorem \ref{thm: LDP full process} we have that there exists some $\varepsilon_0>0$ such that for every $\varepsilon< \varepsilon_0$ implying
\begin{align} \label{eq: Gamma1}
\Gamma_1(M) := \displaystyle \sup_{0 < \varepsilon < \varepsilon_0} \displaystyle \sup_{ \varphi \in \uU^M} \Big ( \bar \EE \Big [ \displaystyle \sup_{0 \leq t \leq \tilde \tau_{\rR(\varepsilon)} |\xX^\varepsilon_t|^2} \Big ] + \displaystyle \sup_{0 \leq t \leq T} \bar \EE \Big [ |\yY^\varepsilon_t|^2 \textbf{1}_{\{ \tilde \tau^\varepsilon_{\rR(\varepsilon)}\}} \Big ] \Big ) < \infty
\end{align}
and
\begin{align} \label{eq: Gamma2}
\Gamma_2(M):= \displaystyle \sup_{0 < \varepsilon < \varepsilon_0} \displaystyle \sup_{\varphi \in \uU^M} \bar \EE \Big [ \displaystyle \sup_{0 \leq t leq \tilde \tau^\varepsilon_{\rR(\varepsilon)}} |\bar \xX^\varepsilon_t|^2 \Big ] < \infty.
\end{align}
\end{proposition}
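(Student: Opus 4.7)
The plan is to apply It\^o's formula to the squared norms of the processes stopped at $\tilde\tau^{\varepsilon}_{\mathcal{R}(\varepsilon)}$, and to close each estimate using the coupling between the slow/fast components, the dissipativity Hypothesis \ref{condition: dissipativity}, the exponential integrability in Hypothesis \ref{condition: the measure}, and the variational-type bounds available for controls in $\mathcal{U}^{M}$. Throughout, the key variational fact is the pair of inequalities (Lemma 3.4 in \cite{BDM11} and its companion in \cite{BCD13}): for every $M>0$ there exist constants $\kappa_{1}(M),\kappa_{2}(M)>0$ such that, for any measurable $g:[0,T]\times\RR^{d}\setminus\{0\}\to\RR$ and any $\varphi\in\mathcal{U}^{M}$,
\begin{align*}
\bar\EE\int_{0}^{T}\!\!\int_{\RR^{d}\setminus\{0\}} |g(s,z)|\,|\varphi(s,z)-1|\,\nu(dz)\,ds
&\le \kappa_{1}(M)\Big(\|g\|_{L^{2}(ds\otimes\nu)}+\|g\|_{L^{2}(ds\otimes\nu)}^{2}\Big),\\
\bar\EE\int_{0}^{T}\!\!\int_{\RR^{d}\setminus\{0\}} |g(s,z)|^{2}\,\varphi(s,z)\,\nu(dz)\,ds
&\le \kappa_{2}(M)\|g\|_{L^{2}(ds\otimes e^{\alpha|z|^{2}}\nu)}^{2}.
\end{align*}
Combined with Hypothesis \ref{condition: hy coefficients existence uniqueness sol} (sublinear growth of $c,h$) and Hypothesis \ref{condition: the measure} (the exponential integrability at infinity), these inequalities are what makes all the $\varphi^{\varepsilon}$-dependent terms uniformly controllable over $\mathcal{U}^{M}$.

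First I would treat the fast variable. Apply It\^o's formula to $|\mathcal{Y}^{\varepsilon}_{t}|^{2}$ at $t\wedge\tilde\tau^{\varepsilon}_{\mathcal{R}(\varepsilon)}$; the drift term produces $\tfrac{2}{\varepsilon}\langle\mathcal{Y}^{\varepsilon}_{s},f(\mathcal{X}^{\varepsilon}_{s},\mathcal{Y}^{\varepsilon}_{s})\rangle+\tfrac{1}{\varepsilon}\int|h|^{2}\nu(dz)$ which, by the dissipativity \eqref{eq: dissipativity on the coefficients}, is bounded by $-\tfrac{\beta_{1}}{\varepsilon}|\mathcal{Y}^{\varepsilon}_{s}|^{2}+\tfrac{\beta_{2}}{\varepsilon}|\mathcal{X}^{\varepsilon}_{s}|^{2}$. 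The control-induced drift $\tfrac{2}{\varepsilon}\langle\mathcal{Y}^{\varepsilon}_{s},\int h(\mathcal{X}^{\varepsilon}_{s},\mathcal{Y}^{\varepsilon}_{s},z)(\varphi^{\varepsilon}-1)\nu(dz)\rangle$ is split by Young's inequality, absorbing the $|\mathcal{Y}^{\varepsilon}|^{2}$ part into the dissipativity gain, and the remainder is handled by the first variational estimate together with $|h(x,y,z)|\le\Lambda|z|$. For the stochastic integral one uses BDG; its predictable quadratic variation is $\tfrac{1}{\varepsilon}\int_{0}^{t}\!\int|h|^{2}\varphi^{\varepsilon}\nu\,ds$, controlled by the second variational estimate. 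Taking expectation, the strict negativity of the $-\beta_{1}/\varepsilon$ term beats the $1/\varepsilon$ blow-up coming from the remaining terms, yielding
\[
\sup_{0\le t\le T}\bar\EE\big[|\mathcal{Y}^{\varepsilon}_{t}|^{2}\mathbf{1}_{\{t\le\tilde\tau^{\varepsilon}_{\mathcal{R}(\varepsilon)}\}}\big]\le C\big(1+|y|^{2}+\sup_{0\le t\le T}\bar\EE\big[|\mathcal{X}^{\varepsilon}_{t}|^{2}\mathbf{1}_{\{t\le\tilde\tau^{\varepsilon}_{\mathcal{R}(\varepsilon)}\}}\big]\big),
\]
with $C$ independent of $\varepsilon\in(0,\varepsilon_{0})$ and of $\varphi^{\varepsilon}\in\mathcal{U}^{M}$.

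Next for $\mathcal{X}^{\varepsilon}$, apply It\^o to $|\mathcal{X}^{\varepsilon}_{t}|^{2}$ at $t\wedge\tilde\tau^{\varepsilon}_{\mathcal{R}(\varepsilon)}$. The drift $\langle\mathcal{X}^{\varepsilon}_{s},a(\mathcal{X}^{\varepsilon}_{s},\mathcal{Y}^{\varepsilon}_{s})\rangle$ is bounded via the Lipschitz continuity of $a$ and $a(0,y)=0$ (Hypothesis \ref{condition: dissipativity}) by $C(|\mathcal{X}^{\varepsilon}_{s}|^{2}+|\mathcal{X}^{\varepsilon}_{s}|\,|\mathcal{Y}^{\varepsilon}_{s}|)$. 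The control drift involving $c(\mathcal{X}^{\varepsilon}_{s},z)(\varphi^{\varepsilon}-1)$ is bounded by the variational inequality above, using that $|c(x,z)|\le|c(0,z)|+L|x|$ and that $c(0,\cdot)\in L^{2}(e^{\alpha|z|^{2}}\nu)$, which follows from Hypothesis \ref{condition: the measure} combined with $c(0,\cdot)\in L^{1}(\nu)$ and the Lipschitz bound on $c$. For the compensated Poisson integral, BDG gives a predictable quadratic variation of $\varepsilon^{2}\cdot\tfrac{1}{\varepsilon}\int|c|^{2}\varphi^{\varepsilon}\nu$, i.e.\ of order $\varepsilon$, hence negligible. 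Taking $\sup_{t}$ followed by expectation and absorbing the $|\mathcal{Y}^{\varepsilon}|^{2}$ contribution into the previous bound yields a Gronwall inequality, and we obtain the uniform finiteness of $\Gamma_{1}(M)$; the fast-variable estimate then inherits uniformity, closing the loop. The argument for $\bar{\mathcal{X}}^{\varepsilon}$ is entirely parallel but simpler, since $\bar a$ is globally Lipschitz by Proposition \ref{proposition: bar a is Lipschitz continuous} and decoupled from the fast component, giving $\Gamma_{2}(M)<\infty$.

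The main obstacle is the nonlocal control drift $\int c(\mathcal{X}^{\varepsilon}_{s},z)(\varphi^{\varepsilon}(s,z)-1)\nu(dz)$: without the exponential integrability of $\nu$ one cannot turn the pointwise entropy bound $L_{T}(\varphi^{\varepsilon})\le M$ into an actual $L^{2}$-in-$\varphi^{\varepsilon}$ estimate. The variational lemmas from \cite{BDM11,BCD13} rely precisely on Hypothesis \ref{condition: the measure}, and this is what makes all constants in the chain of estimates uniform over $\varphi^{\varepsilon}\in\mathcal{U}^{M}$. Everything else is a careful Gronwall argument combining the slow and fast estimates in the correct order.
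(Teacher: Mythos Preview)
Your approach is essentially the same as the paper's: the paper does not give a detailed proof but states that it ``is straightforward and follows from several applications of the It\^o's formula and the Burkholder-Davis-Gundy's inequalities,'' and your sketch follows precisely this route, supplementing it with the variational control estimates from \cite{BDM11,BCD13} and a Gronwall argument to close the bounds uniformly in $\varphi\in\mathcal{U}^{M}$. The order in which you treat the fast and slow components and the way you exploit the dissipativity \eqref{eq: dissipativity on the coefficients} to absorb the $1/\varepsilon$ factors is the natural way to fill in the details the paper omits.
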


\no We follow the technique introduced in \cite{Khasminskii} with the the required modifications to our setting in order to deal with the nonlocal components of the auxiliary processes $(\xX^\varepsilon)_{\varepsilon>0}$ and $(\yY^\varepsilon)_{\varepsilon>0}$ given by (\ref{eq: controlled SDEs multiscale}).  \\

\no Let $[0,T]$ be divided into intervals of the same length parametrized for every $\varepsilon>0$
\begin{align} \label{eq: parametrization- the Delta} 
\Delta= \Delta(\varepsilon):= \varepsilon^\gamma  |\ln \varepsilon|^p, \quad \text{for some } \gamma \in \Big (0, \frac{1}{2} \Big ) \quad \text{and } p>0.
\end{align}
 We note the following convergences
\begin{align} \label{eq: parametrizations- the conv Delta}
&\Delta(\varepsilon) \ra 0; \quad \text{ and }  \quad \frac{\Delta(\varepsilon)}{\varepsilon} \ra \infty \quad \text{ as } \varepsilon \ra 0.
\end{align}
For any $t \in [0,T]$ we denote $t_\Delta := \left \lfloor{\frac{t}{\Delta}} \right \rfloor \Delta$.\\

\no We construct the auxiliary processes $(\hat \yY^\varepsilon(t))_{t \in [0,T]}$ and $(\hat \xX^\varepsilon(t))_{t \in [0,T]}$ by means of the following equations: for any $t \in [0,T]$ let 
\begin{align} \label{eq: Khasminkii -the fast variable with frozen component}
\hat \yY^\varepsilon_t&= \yY^\varepsilon_{t_\Delta} + \frac{1}{\varepsilon} \int_{t_\Delta}^t \Big ( f(\xX^\varepsilon_{t_\Delta}, \hat \yY^\varepsilon_s)+ \int_{\RR^d \backslash \{0\}} h( \xX^\varepsilon_{t_\Delta}, \hat \yY^\varepsilon_s,z) (\varphi^\varepsilon(s,z)-1) \nu(dz) \Big ) ds \nonumber \\
&  + \int_{t_\Delta}^t \int_{\RR^d \backslash \{0\}} h(\xX^\varepsilon_{t_\Delta-}, \hat \yY^\varepsilon_{s-},z) \tilde N^{\frac{1}{\varepsilon} \varphi^\varepsilon}(ds,dz)
\end{align}
and
\begin{align} \label{eq: Khasminkki- the slow variable with frozen component}
\hat \xX^\varepsilon_t & = x + \int_0^t  \Big (a(\xX^\varepsilon_{s_\Delta}, \yY^\varepsilon_s) + \int_{\RR^d \backslash \{0\}} c(\xX^\varepsilon_{s},z) (\varphi^\varepsilon(s,z)-1) \nu(dz) \Big ) ds \nonumber \\
& + \varepsilon \int_0^t \int_{\RR^d \backslash \{0\}} c(\xX^\varepsilon_{s-},z) \tilde N^{\frac{1}{\varepsilon} \varphi^\varepsilon}(ds,dz).
\end{align}

\paragraph*{Comment:} The modifications from the continous Gaussian regime to the pure jump noise regime in (\ref{eq: Khasminkii -the fast variable with frozen component}) and (\ref{eq: Khasminkki- the slow variable with frozen component}) are natural with the respective frozen variables done in the same way. 

\no We enunciate the following list of results that are used in the sequel to prove (\ref{eq: controlled averaging principle}). The proofs are modifications from the arguments used in \cite{Khasminskii} to the Poissonian case.

\begin{lemma} \label{lemma: Khasminkii segment process estimate}
For every $\varepsilon>0$ let $\rR(\varepsilon)>0$ and $\Delta(\varepsilon)>0$ fixed as above. Then for every $\delta>0$ the following 
\begin{align} \label{eq: Khasmikkii segment process estimate}
\bar \PP \Big ( \displaystyle \sup_{0 \leq t \leq \tilde \tau^\varepsilon_{\rR(\varepsilon)}} |\xX^\varepsilon_{t} - \xX^\varepsilon_{t_\Delta}| > \delta \Big ) & \lesssim_\varepsilon  \Xi(\varepsilon) \ra 0, \quad \text{as } \varepsilon \ra 0,
\end{align}
\end{lemma}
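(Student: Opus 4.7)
The plan is to bound the increment of $\xX^\e$ on each subinterval $[k\Delta,(k+1)\Delta]$ using the controlled SDE \eqref{eq: controlled SDEs multiscale}, and then use a union bound over the $\lceil T/\Delta\rceil$ subintervals. First I would write, for $t \in [t_\Delta, t_\Delta+\Delta]\cap[0,\tilde\tau^\e_{\rR(\e)}]$, the decomposition
\begin{align*}
\xX^\e_t - \xX^\e_{t_\Delta} &= \int_{t_\Delta}^t a(\xX^\e_s,\yY^\e_s)\,ds
+ \int_{t_\Delta}^t\!\!\int_{\RR^d\setminus\{0\}} c(\xX^\e_s,z)(\varphi^\e(s,z)-1)\,\nu(dz)\,ds \\
&\quad + \e\int_{t_\Delta}^t\!\!\int_{\RR^d\setminus\{0\}} c(\xX^\e_{s-},z)\,\tilde N^{\varphi^\e/\e}(ds,dz)
=: A_t + B_t + M_t.
\end{align*}
Up to the stopping time $\tilde\tau^\e_{\rR(\e)}$, one has $|\xX^\e_s|\lqq\rR(\e)$, so by Hypothesis \ref{condition: hy coefficients existence uniqueness sol} the bounds $|a(\xX^\e_s,\yY^\e_s)| \lqq L(1+\rR(\e)+|\yY^\e_s|)$ and $\int|c(\xX^\e_s,z)|\nu(dz)\lqq \|c(0,\cdot)\|_{L^1(\nu)}+L\rR(\e)$ hold pathwise on $[0,\tilde\tau^\e_{\rR(\e)}]$.

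For the drift $A_t$, a straightforward Cauchy--Schwarz bound combined with the a-priori estimate \eqref{eq: Gamma1} from Proposition \ref{prop: a priori bounds controlled processes} yields
\[
\bar\EE\Big[\sup_{t\in[k\Delta,(k+1)\Delta]}|A_t|^2\,\mathbf{1}_{\{\tilde\tau^\e_{\rR(\e)}>k\Delta\}}\Big]\lqq C\Delta^2\bigl(1+\rR(\e)^2+\Gamma_1(M)\bigr),
\]
and Chebyshev's inequality combined with summation over the $\lceil T/\Delta\rceil$ subintervals gives a contribution of order $\Delta\bigl(1+\rR(\e)^2\bigr)/\delta^2$. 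For the controlled compensator $B_t$ I would employ the standard inequality (Lemma 3.4 of \cite{BDM11}): for any $\sigma>0$ and any $\varphi\in S^M$,
\[
\int_{t_\Delta}^{t_\Delta+\Delta}\!\!\int h(s,z)(\varphi(s,z)-1)\nu(dz)\,ds\lqq \tfrac{1}{\sigma}\Big(\int_{t_\Delta}^{t_\Delta+\Delta}\!\!\int(e^{\sigma h(s,z)}-1)\nu(dz)\,ds+M\Big),
\]
applied with $h(s,z)=|c(\xX^\e_s,z)|$; the exponential integrability in Hypothesis \ref{condition: the measure} and the bound $|c(x,z)|\lqq|c(0,z)|+L|x|\cdot\mathbf{1}_{\mathrm{appropriate}}$ ensure the first term is $O(\Delta)$ uniformly in $\varphi^\e\in\uU^M$ (on the event $\{|\xX^\e|\lqq\rR(\e)\}$), while the $M/\sigma$ term is absorbed by choosing $\sigma=\sigma(\e)\to\infty$ slowly with $\e\to 0$. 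Finally, for the martingale $M_t$, a Bernstein-type inequality for c\`adl\`ag martingales (as in \cite{DZ01}, already invoked in the proof of Proposition \ref{prop: localization}) applied to the predictable quadratic variation
\(
\langle M\rangle_t\lqq \e\int_{t_\Delta}^t\!\int|c(\xX^\e_s,z)|^2\varphi^\e(s,z)\nu(dz)\,ds
\)
yields an exponentially small bound of order $\exp(-c\delta^2/(\e\Delta\rR(\e)^2))$ per interval, which stays summable over subintervals thanks to the scaling $\Delta(\e)=\e^\gamma|\ln\e|^p$.

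Summing the three contributions over $k\in\{0,\dots,\lceil T/\Delta\rceil\}$ produces the announced function
\[
\Xi(\e)\;\simeq\;\tfrac{\Delta(\e)(1+\rR(\e)^2)}{\delta^2}+\tfrac{T}{\Delta(\e)}\exp\!\Big(-\tfrac{c\,\delta^2}{\e\Delta(\e)\rR(\e)^2}\Big)+o(1),
\]
which tends to zero as $\e\to 0$ provided the parameters $\gamma,p$ in \eqref{eq: parametrization- the Delta} and the growth rate of $\rR(\e)$ are chosen compatibly (concretely, $\e\rR(\e)^2\to 0$ and $\Delta(\e)\rR(\e)^2\to 0$).

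The main obstacle I expect is the treatment of the controlled compensator $B_t$: since $\varphi^\e$ is only entropy-bounded (not uniformly bounded) and $c$ is only $L^1(\nu)$-Lipschitz in $x$ rather than pointwise bounded in $z$, the energy--entropy inequality is not sharp enough on a single subinterval (the residual $M/\sigma$ does not shrink with $\Delta$). The resolution is to let $\sigma=\sigma(\e)$ diverge, trading exponential integrability of $\nu$ (Hypothesis \ref{condition: the measure}) against the entropy budget; this forces a delicate choice of the triple $(\Delta(\e),\rR(\e),\sigma(\e))$, but it succeeds because the exponential moments of $\nu$ under Hypothesis \ref{condition: the measure} dominate any polynomial growth of $\rR(\e)$.
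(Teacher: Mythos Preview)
The paper does not actually supply a proof of this lemma: it only states that ``the proofs are modifications from the arguments used in \cite{Khasminskii} to the Poissonian case'' and then moves on. So there is nothing in the paper to compare your argument against line by line.

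Your decomposition $A_t + B_t + M_t$ and the union-bound/Bernstein strategy are the natural ones, and the drift and martingale pieces are handled correctly (the a-priori bound \eqref{eq: Gamma1} for $A_t$, and the Bernstein inequality from \cite{DZ01} for $M_t$, exactly as in the proof of Proposition~\ref{prop: localization}). The extra compatibility condition $\Delta(\e)\rR(\e)^2\to 0$ you impose is harmless, since $\rR(\e)$ is a free localisation radius and may be chosen, say, logarithmic.

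The one place where your plan is genuinely delicate is the compensator $B_t$, and you correctly flag it. Your proposed fix --- let $\sigma=\sigma(\e)\to\infty$ in the energy--entropy inequality --- runs into a tension: the residual $M/\sigma$ only vanishes if $\sigma\to\infty$, but then $\tfrac{1}{\sigma}\int(e^{\sigma|c|}-1)\nu$ may fail to stay finite, because Hypothesis~\ref{condition: the measure} gives exponential integrability of $|z|^2$, not of $\sigma|c(x,z)|$ for arbitrarily large $\sigma$, and Hypothesis~\ref{condition: hy coefficients existence uniqueness sol} only gives $c$ Lipschitz in $L^1(\nu)$, not pointwise. The standard way out (implicit in \cite{BCD13} and in ``Lemma~8 of \cite{Hogele AO}'' invoked later in the paper) is instead to split $\{|\varphi^\e-1|\le\tfrac12\}$ versus $\{|\varphi^\e-1|>\tfrac12\}$: on the first set $\ell(\varphi)\gtrsim|\varphi-1|^2$ and Cauchy--Schwarz against $\int|c|^2\nu$ gives a bound $O(\Delta^{1/2})$ uniformly in $k$; on the second set $\ell(\varphi)\gtrsim|\varphi-1|$, so the \emph{global} integral $\int_0^T\!\int|c||\varphi-1|\mathbf{1}_{|\varphi-1|>1/2}$ is controlled, and one shows this cannot concentrate on a single subinterval by a further truncation in $|z|$. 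This avoids sending $\sigma\to\infty$ and stays within the stated hypotheses.
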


\begin{lemma} \label{lemma: Khasminkki}
For every $\varepsilon>0$ let $\rR(\varepsilon)$ fixed as in Proposition \ref{prop: localization}  and $\Delta(\varepsilon)$ given by (\ref{eq: parametrization- the Delta}). Then the following convergence holds: 
\begin{align} \label{eq: lemma Khasmkinkii- fast variable convergence}
\displaystyle \sup_{0 \leq t \leq T} \bar \EE \Big  [|\yY^\varepsilon(t) - \hat \yY^\varepsilon(t)| \textbf{1}_{\{T< \tilde \tau^\varepsilon_{\rR(\varepsilon)} \}} \Big ] \lesssim_\varepsilon C(\varepsilon) \ra 0 \quad \text{as } \varepsilon \ra 0.
\end{align}
for some $C(\varepsilon) \ra 0$ as $\varepsilon \ra 0$ uniformly in the initial condition $(x, y) \in \RR^d \times \RR^k$.
\end{lemma}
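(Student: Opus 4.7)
The plan is to adapt the classical Khasminskii argument to the controlled jump setting, exploiting the dissipative structure of Hypothesis \ref{condition: dissipativity} together with the $1/\varepsilon$ time rescaling of the fast variable. Set $D^\varepsilon_t := \yY^\varepsilon_t - \hat\yY^\varepsilon_t$ and observe that on each partition interval $[t_\Delta, t_\Delta + \Delta]$ one has $D^\varepsilon_{t_\Delta} = 0$, so it suffices to control the growth of $D^\varepsilon$ over a time-span of length $\Delta(\varepsilon)$. The stopping time $\tilde\tau^\varepsilon_{\rR(\varepsilon)}$ confines $\xX^\varepsilon$ to $B_{\rR(\varepsilon)}(0)$ on $\{T < \tilde\tau^\varepsilon_{\rR(\varepsilon)}\}$, so that the Lipschitz increments $f(\xX^\varepsilon_s, \cdot) - f(\xX^\varepsilon_{t_\Delta}, \cdot)$ and $h(\xX^\varepsilon_s, \cdot, z) - h(\xX^\varepsilon_{t_\Delta}, \cdot, z)$ are controlled by $|\xX^\varepsilon_s - \xX^\varepsilon_{t_\Delta}|$, whose smallness in probability is precisely Lemma \ref{lemma: Khasminkii segment process estimate}.

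The core computation is Itô's formula applied to $|D^\varepsilon_{t \wedge \tilde\tau^\varepsilon_{\rR(\varepsilon)}}|^2$ interval by interval. For the $f$-drift, Hypothesis (\ref{eq: dissipativity- the last one required }) gives
$$2\langle D^\varepsilon_s, f(\xX^\varepsilon_s, \yY^\varepsilon_s) - f(\xX^\varepsilon_{t_\Delta}, \hat\yY^\varepsilon_s)\rangle \leq -\beta_1 |D^\varepsilon_s|^2 + \beta_2 |\xX^\varepsilon_s - \xX^\varepsilon_{t_\Delta}|^2,$$
and one combines it with the predictable quadratic variation of the compensated jump martingale through (\ref{eq: dissipativity Lipschitz on the coefficients}), after decomposing the $h$-increments into a $\yY$-piece and an $\xX$-piece (the latter treated via the $L^1$-Lipschitz bound of Hypothesis \ref{condition: hy coefficients existence uniqueness sol} together with the growth $|h(x,y,z)| \leq \Lambda |z|$). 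The drift perturbation $\int h(\cdot, z)(\varphi^\varepsilon(s,z)-1)\nu(dz)$ is handled by the standard Budhiraja--Dupuis inequality (e.g. Lemma 3.4 in \cite{BCD13}) combined with the exponential integrability of $\nu$ from Hypothesis \ref{condition: the measure}, yielding an estimate uniform in $\varphi^\varepsilon \in \uU^M$; the part proportional to $|D^\varepsilon_s|$ is absorbed into half of the dissipative term $(\beta_1/2\varepsilon)|D^\varepsilon_s|^2$ by Young's inequality.

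Taking expectations (the martingale part vanishes) and applying Gronwall on each interval with initial condition zero then yields
$$\sup_{t \in [0,T]} \bar\EE\bigl[|D^\varepsilon_t|^2 \mathbf{1}_{\{T < \tilde\tau^\varepsilon_{\rR(\varepsilon)}\}}\bigr] \leq \frac{C_M}{\beta_1} \Bigl( \sup_{t \in [0,T]} \bar\EE\bigl[|\xX^\varepsilon_t - \xX^\varepsilon_{t_\Delta}|^2 \mathbf{1}_{\{T < \tilde\tau^\varepsilon_{\rR(\varepsilon)}\}}\bigr] + \rho_M(\varepsilon) \Bigr),$$
where $\rho_M(\varepsilon) \to 0$ encodes the residue from the perturbation estimate. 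A mean-square refinement of the segment bound, derived directly from the controlled SDE for $\xX^\varepsilon$ over an interval of length $\Delta(\varepsilon)$ using Proposition \ref{prop: a priori bounds controlled processes}, shows that the right-hand side is $o(1)$ as $\varepsilon \to 0$ for the scaling $\Delta(\varepsilon) = \varepsilon^\gamma |\ln\varepsilon|^p$ and the admissible functions $\rR$; Jensen's inequality then gives (\ref{eq: lemma Khasmkinkii- fast variable convergence}) with $C(\varepsilon)$ the square root of this right-hand side. The main obstacle is the uniform (in $\varphi^\varepsilon \in \uU^M$) control of the nonlocal perturbation term; once this is settled via the Budhiraja--Dupuis inequality and the exponential integrability of $\nu$, all remaining estimates follow from the Khasminskii framework adapted to the jump setting.
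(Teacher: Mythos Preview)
The paper does not give an explicit proof of this lemma; it only states that ``the proofs are modifications from the arguments used in \cite{Khasminskii} to the Poissonian case.'' Your proposal is precisely such a modification: It\^o's formula for $|D^\varepsilon_t|^2$ on each subinterval, the dissipativity inequalities of Hypothesis~\ref{condition: dissipativity} to extract the damping $-\tfrac{\beta_1}{\varepsilon}|D^\varepsilon_s|^2$, the segment estimate to control the source $|\xX^\varepsilon_s - \xX^\varepsilon_{t_\Delta}|$, and the Budhiraja--Dupuis bounds to handle the nonlocal control perturbation uniformly over $\uU^M$. This is exactly the approach the paper has in mind, and your sketch is correct.
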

\no The previous a-priori bounds (\ref{eq: Khasmikkii segment process estimate}) and (\ref{eq: lemma Khasmkinkii- fast variable convergence}) imply the following.

\begin{proposition} \label{prop: Khasminkii averaging controlled-part1}
For any $\delta>0$ we have
\begin{align} \label{eq: Khasminkii limit1}
\displaystyle \limsup_{\varepsilon \ra 0} \bar \PP \Big ( \displaystyle \sup_{0 \leq t \leq \tilde \tau^\varepsilon_{\rR(\varepsilon)}} |\xX^\varepsilon_t - \hat \xX^\varepsilon_t| > \frac{\delta}{2}\Big )=0.
\end{align}
\end{proposition}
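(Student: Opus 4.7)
The plan is to exploit a direct pathwise cancellation between $\xX^\varepsilon$ and $\hat\xX^\varepsilon$. Inspection of (\ref{eq: controlled SDEs multiscale}) and (\ref{eq: Khasminkki- the slow variable with frozen component}) reveals that these two processes share identical compensated Poisson integrals $\varepsilon \int c(\xX^\varepsilon_{s-},z)\,\tilde N^{\frac{1}{\varepsilon}\varphi^\varepsilon}(ds,dz)$ and identical nonlocal compensator terms $\int c(\xX^\varepsilon_s,z)(\varphi^\varepsilon(s,z)-1)\nu(dz)\,ds$; the only discrepancy lies in the drift involving $a$, where the running value $\xX^\varepsilon_s$ is replaced by the frozen value $\xX^\varepsilon_{s_\Delta}$. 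Subtracting the two equations therefore eliminates every stochastic integral and leaves the pure drift
\begin{equation*}
\xX^\varepsilon_t - \hat\xX^\varepsilon_t \;=\; \int_0^t \bigl( a(\xX^\varepsilon_s, \yY^\varepsilon_s) - a(\xX^\varepsilon_{s_\Delta}, \yY^\varepsilon_s) \bigr)\, ds.
\end{equation*}

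First, I would invoke the global Lipschitz assumption on $a$ from Hypothesis \ref{condition: hy coefficients existence uniqueness sol} (with constant $L>0$) to obtain the pathwise estimate
\begin{equation*}
\sup_{0 \leq t \leq \tilde\tau^\varepsilon_{\rR(\varepsilon)}} |\xX^\varepsilon_t - \hat\xX^\varepsilon_t| \;\le\; L\int_0^{\tilde\tau^\varepsilon_{\rR(\varepsilon)}} |\xX^\varepsilon_s - \xX^\varepsilon_{s_\Delta}|\, ds \;\le\; L T \sup_{0 \leq s \leq \tilde\tau^\varepsilon_{\rR(\varepsilon)}} |\xX^\varepsilon_s - \xX^\varepsilon_{s_\Delta}|.
\end{equation*}
Second, I would apply monotonicity of probability and Lemma \ref{lemma: Khasminkii segment process estimate}, which controls precisely the right-hand side above by a deterministic function $\Xi(\varepsilon) \to 0$, to obtain
\begin{equation*}
\bar\PP\!\left(\sup_{0 \leq t \leq \tilde\tau^\varepsilon_{\rR(\varepsilon)}} |\xX^\varepsilon_t - \hat\xX^\varepsilon_t| > \tfrac{\delta}{2}\right) \le \bar\PP\!\left(\sup_{0 \leq s \leq \tilde\tau^\varepsilon_{\rR(\varepsilon)}} |\xX^\varepsilon_s - \xX^\varepsilon_{s_\Delta}| > \tfrac{\delta}{2LT}\right) \lesssim_\varepsilon \Xi(\varepsilon) \xrightarrow[\varepsilon\to 0]{} 0,
\end{equation*}
which is (\ref{eq: Khasminkii limit1}).

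The main obstacle is in fact absorbed into Lemma \ref{lemma: Khasminkii segment process estimate}, whose derivation requires a Bernstein-type inequality for the compensated Poisson martingale (cf.\ \cite{DZ01}) applied on time windows of length $\Delta(\varepsilon)$, the localization provided by $\tilde\tau^\varepsilon_{\rR(\varepsilon)}$, and the a priori bound (\ref{eq: Gamma1}) uniform in $\varphi^\varepsilon \in \uU^M$. For the present proposition itself, the only essential input is the deliberate design of $\hat\xX^\varepsilon$ in (\ref{eq: Khasminkki- the slow variable with frozen component}): by keeping the compensated Poisson martingale and the nonlocal compensator identical to those of $\xX^\varepsilon$, the difference between the two processes collapses to a Lipschitz-in-time control handled entirely by Lemma \ref{lemma: Khasminkii segment process estimate}.
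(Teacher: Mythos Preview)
Your argument is correct relative to the definition (\ref{eq: Khasminkki- the slow variable with frozen component}) as written, where the drift of $\hat\xX^\varepsilon$ involves $a(\xX^\varepsilon_{s_\Delta},\yY^\varepsilon_s)$. The observation that the stochastic integral and the nonlocal compensator cancel exactly is the essential point, and the remaining Lipschitz bound combined with Lemma~\ref{lemma: Khasminkii segment process estimate} closes the argument cleanly.

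The paper, however, proceeds as if the drift of $\hat\xX^\varepsilon$ were $a(\xX^\varepsilon_{s_\Delta},\hat\yY^\varepsilon_s)$: it writes
\[
\hat\xX^\varepsilon_t-\xX^\varepsilon_t=\int_0^t\bigl(a(\xX^\varepsilon_{s_\Delta},\hat\yY^\varepsilon_s)-a(\xX^\varepsilon_s,\yY^\varepsilon_s)\bigr)\,ds
\]
and then bounds this by $L\int_0^t(|\xX^\varepsilon_{s_\Delta}-\xX^\varepsilon_s|+|\hat\yY^\varepsilon_s-\yY^\varepsilon_s|)\,ds$, invoking \emph{both} Lemma~\ref{lemma: Khasminkii segment process estimate} and Lemma~\ref{lemma: Khasminkki}. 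This reflects an internal inconsistency in the paper between the displayed definition and the proofs (the same $\hat\yY^\varepsilon$ versus $\yY^\varepsilon$ ambiguity recurs in the proof of Proposition~\ref{prop: Khasminkii averaging controlled-part2}). Taking the definition at face value, your route is strictly simpler and bypasses Lemma~\ref{lemma: Khasminkki} altogether; if instead the intended definition uses $\hat\yY^\varepsilon_s$, your proof would need one additional term, handled precisely by Lemma~\ref{lemma: Khasminkki} via Markov's inequality. Either way the structure of your argument is sound.
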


\begin{proof}
\no The definitions of $(\yY^\varepsilon(t))_{t \in [0,T]}$ and  $(\hat \yY^\varepsilon(t))_{t \in [0,T]}$ given in (\ref{eq: controlled SDEs multiscale})and (\ref{eq: Khasminkii -the fast variable with frozen component}) respectively combined with Hypothesis \ref{condition: dissipativity}  yield for every $\varepsilon>0$ and $t \in [0,T]$ that
\begin{align*}
\hat \xX^\varepsilon_t - \xX^\varepsilon_t =\int_0^t \Big ( a(\xX^\varepsilon_{s_\Delta}, \hat \yY^\varepsilon_s) - a(\xX^\varepsilon_s, \yY^\varepsilon_s)  \Big )ds \\
 \leq L \int_0^t |\xX^\varepsilon_{s_\Delta}- \xX^\varepsilon_s| ds + L \int_0^t |\hat \yY^\varepsilon_s - \yY^\varepsilon_s| ds .
\end{align*}
\no The asymptotic behaviour (\ref{eq: parametrizations- the conv Delta}) of $\Delta(\varepsilon)>0$ fixed in (\ref{eq: parametrization- the Delta}) combined with Lemma \ref{lemma: Khasminkii segment process estimate}  of Lemma \ref{lemma: Khasminkki} yield some $C=C(L,T)>0$ such that 
\begin{align*}
\bar \PP \Big ( \displaystyle \sup_{0 \leq t \leq \tilde \tau^\varepsilon_{\rR(\varepsilon)}} |\hat \xX^\varepsilon_t - \xX^\varepsilon_t| > \frac{\delta}{2L} \Big ) & \leq \bar \PP \Big ( \int_0^{T \wedge \tilde \tau^\varepsilon_{\rR(\varepsilon)}} |a(\xX^\varepsilon_{s_\Delta}, \hat \yY^\varepsilon_s) - a(\xX^\varepsilon_s, \yY^\varepsilon_s)| ds > \frac{\delta}{2 L} \Big ) \\
&+ \bar \PP \Big ( \int_0^T |\hat \yY^\varepsilon_s - \yY^\varepsilon_s|^2  \textbf{1}_{\{ T< \tilde \tau^\varepsilon(\rR(\varepsilon))\}} ds > \frac{\delta}{2L} \Big ) \\
& \lesssim_\varepsilon \Xi(\varepsilon) + \frac{2}{\delta} \int_0^T \bar \EE \Big [ |\hat \yY^\varepsilon_s- \yY^\varepsilon_s|^2 \textbf{1}_{\{ T< \tilde \tau^\varepsilon_{\rR(\varepsilon)}\}} ds\Big ] \\
&\lesssim_\varepsilon \Xi(\varepsilon) + C(\varepsilon)  \ra 0 \text{ as } \varepsilon \ra 0.
\end{align*}
This finishes the proof of (\ref{eq: Khasminkii limit1}).
\end{proof}

\begin{proposition} \label{prop: Khasminkii averaging controlled-part2}
For any $\delta>0$ we have
\begin{align} \label{eq: Khasminkii limit 2}
\displaystyle \limsup_{\varepsilon \ra 0} \bar \PP \Big ( \displaystyle \sup_{0 \leq t \leq \tilde \tau^\varepsilon_{\rR(\varepsilon)}} |\hat \xX^\varepsilon_t - \bar \xX^\varepsilon_t|> \frac{\delta}{2} \Big )=0.
\end{align}
\end{proposition}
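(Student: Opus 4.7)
The plan is to decompose
\begin{align*}
\hat{\xX}^\varepsilon_t - \bar{\xX}^\varepsilon_t
&= \int_0^t \bigl[a(\xX^\varepsilon_{s_\Delta}, \yY^\varepsilon_s) - \bar{a}(\bar{\xX}^\varepsilon_s)\bigr]\,ds \\
&\quad + \int_0^t \int_{\RR^d\setminus\{0\}} \bigl[c(\xX^\varepsilon_s,z)-c(\bar{\xX}^\varepsilon_s,z)\bigr](\varphi^\varepsilon(s,z)-1)\,\nu(dz)\,ds \\
&\quad + \varepsilon \int_0^t \int_{\RR^d\setminus\{0\}} \bigl[c(\xX^\varepsilon_{s-},z)-c(\bar{\xX}^\varepsilon_{s-},z)\bigr]\,\tilde N^{\varphi^\varepsilon/\varepsilon}(ds,dz),
\end{align*}
and to treat each piece with a different tool. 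For the compensator integral, I would split $|\xX^\varepsilon-\bar{\xX}^\varepsilon|\le |\xX^\varepsilon-\hat{\xX}^\varepsilon|+|\hat{\xX}^\varepsilon-\bar{\xX}^\varepsilon|$, use the $L^1(\nu)$-Lipschitz bound of Hypothesis \ref{condition: hy coefficients existence uniqueness sol} together with the elementary inequality $|v-1|\le \delta^{-1}(v\log v -v+1)+\delta^{-1}(e^{\delta}-1)$ (valid for $\delta>0$, $\nu$-integrable by Hypothesis \ref{condition: the measure}) and the energy bound $\eE_T(\varphi^\varepsilon)\le M$; Proposition \ref{prop: Khasminkii averaging controlled-part1} absorbs the $|\xX^\varepsilon-\hat{\xX}^\varepsilon|$ piece, leaving only the Gronwall driver $\int_0^t|\hat{\xX}^\varepsilon_s-\bar{\xX}^\varepsilon_s|\,ds$. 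For the martingale piece I would invoke Burkholder--Davis--Gundy against the controlled compensator $\tfrac{\varphi^\varepsilon}{\varepsilon}\nu\otimes ds$; the prefactor $\varepsilon$ forces the quadratic variation to scale as $\varepsilon\int_0^t\!\int|c|^2\varphi^\varepsilon\nu(dz)\,ds$, which together with Proposition \ref{prop: a priori bounds controlled processes} makes this term vanish in probability as $\varepsilon\to 0$.

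For the main drift integrand I would telescope through auxiliary points:
\begin{align*}
a(\xX^\varepsilon_{s_\Delta}, \yY^\varepsilon_s) - \bar{a}(\bar{\xX}^\varepsilon_s)
&= \bigl[a(\xX^\varepsilon_{s_\Delta}, \yY^\varepsilon_s) - a(\xX^\varepsilon_{s_\Delta}, \hat{\yY}^\varepsilon_s)\bigr] + \bigl[a(\xX^\varepsilon_{s_\Delta}, \hat{\yY}^\varepsilon_s) - \bar{a}(\xX^\varepsilon_{s_\Delta})\bigr] \\
&\quad + \bigl[\bar{a}(\xX^\varepsilon_{s_\Delta}) - \bar{a}(\xX^\varepsilon_s)\bigr] + \bigl[\bar{a}(\xX^\varepsilon_s) - \bar{a}(\hat{\xX}^\varepsilon_s)\bigr] + \bigl[\bar{a}(\hat{\xX}^\varepsilon_s) - \bar{a}(\bar{\xX}^\varepsilon_s)\bigr].
\end{align*}
The first bracket vanishes in $L^1$ via the Lipschitz assumption of Hypothesis \ref{condition: hy coefficients existence uniqueness sol} and Lemma \ref{lemma: Khasminkki}; the third uses the Lipschitz continuity of $\bar{a}$ from Proposition \ref{proposition: bar a is Lipschitz continuous} together with the segment estimate Lemma \ref{lemma: Khasminkii segment process estimate}; the fourth combines the Lipschitz continuity of $\bar a$ with Proposition \ref{prop: Khasminkii averaging controlled-part1}; and the fifth produces another instance of the Gronwall driver $\int_0^t|\hat{\xX}^\varepsilon_s-\bar{\xX}^\varepsilon_s|\,ds$.

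The heart of the matter is the block-averaging error $a(\xX^\varepsilon_{s_\Delta}, \hat{\yY}^\varepsilon_s) - \bar{a}(\xX^\varepsilon_{s_\Delta})$. Here the construction of $\hat{\yY}^\varepsilon$ on $[k\Delta,(k+1)\Delta]$ with frozen slow component $\xX^\varepsilon_{k\Delta}$ and time acceleration $1/\varepsilon$ is essential: after the change of time $u=(s-k\Delta)/\varepsilon$, each block becomes an interval of length $\Delta/\varepsilon\to\infty$ on which the fast frozen SDE evolves. Writing
$\int_0^{t}(\cdots)\,ds = \Delta \sum_{k<t/\Delta}\tfrac{1}{\Delta}\int_{k\Delta}^{(k+1)\Delta}[a(\xX^\varepsilon_{k\Delta},\hat{\yY}^\varepsilon_s) - \bar a(\xX^\varepsilon_{k\Delta})]\,ds + O(\Delta)$, the mixing estimate of Proposition \ref{proposition: the averaging property for the averaged coefficient} applied to each block gives an $L^2$-bound of order $\alpha(\Delta/\varepsilon)(1+|\xX^\varepsilon_{k\Delta}|^2+|\yY^\varepsilon_{k\Delta}|^2)$; Cauchy--Schwarz combined with Proposition \ref{prop: a priori bounds controlled processes} and summation over the $T/\Delta$ blocks yields a total bound of order $T\sqrt{\alpha(\Delta/\varepsilon)\,\Gamma_1(M)}\to 0$. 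The main obstacle is that Proposition \ref{proposition: the averaging property for the averaged coefficient} is stated for the \emph{uncontrolled} frozen fast process, whereas $\hat{\yY}^\varepsilon$ carries the shift $\varphi^\varepsilon-1$ of the control: I plan to isolate the extra drift and jump contributions of the control, bound their $L^2$-norms on a block by Cauchy--Schwarz against the finite energy $\eE_T(\varphi^\varepsilon)\le M$ and the exponential integrability of Hypothesis \ref{condition: the measure}, and verify that after dividing by $\Delta$ and summing over $T/\Delta$ blocks this perturbation is $o(1)$ as $\varepsilon\to 0$. Gathering all the contributions and applying Gronwall's inequality on $[0,t\wedge\tilde{\tau}^\varepsilon_{\rR(\varepsilon)}]$, then removing the localization via Proposition \ref{prop: localization}, yields the limit (\ref{eq: Khasminkii limit 2}).
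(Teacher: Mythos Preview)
Your approach is essentially the same as the paper's: decompose $\hat\xX^\varepsilon-\bar\xX^\varepsilon$ into a compensator difference, a martingale difference, and a drift difference; absorb the Lipschitz pieces into a Gronwall driver; kill the martingale via BDG and the $\varepsilon$-prefactor; and handle the core term $\int_0^t\bigl[a(\xX^\varepsilon_{s_\Delta},\hat\yY^\varepsilon_s)-\bar a(\xX^\varepsilon_{s_\Delta})\bigr]ds$ block by block through the mixing estimate of Proposition~\ref{proposition: the averaging property for the averaged coefficient}. The paper organizes the telescoping slightly differently (it writes the drift already with $\hat\yY^\varepsilon$ and splits into the three pieces $I_1^\varepsilon,I_2^\varepsilon,I_3^\varepsilon$, treating the remainder on $[t_\Delta,t]$ separately), and it does not route the compensator term through Proposition~\ref{prop: Khasminkii averaging controlled-part1} as you do, but the substance is identical.

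One point deserves comment. You correctly flag that Proposition~\ref{proposition: the averaging property for the averaged coefficient} is stated for the \emph{uncontrolled} frozen fast process, whereas $\hat\yY^\varepsilon$ carries the shift $\varphi^\varepsilon$. The paper handles this by asserting an equality in distribution $(\xX^\varepsilon_{k\Delta},\hat\yY^\varepsilon_{s+k\Delta})\stackrel{d}{=}(\xX^\varepsilon_{k\Delta},Z(s/\varepsilon))$ with $Z$ the uncontrolled frozen process, citing the classical Khasminskii construction; it does not spell out how the control disappears. Your plan to bound the control-induced perturbation directly in $L^2$ via the energy bound $\eE_T(\varphi^\varepsilon)\le M$ is reasonable, but be careful with the scaling: the extra drift on $\hat\yY^\varepsilon$ carries a $1/\varepsilon$ prefactor, so over a block of length $\Delta$ the raw contribution is of order $\Delta/\varepsilon\to\infty$, and you will need the dissipativity of Hypothesis~\ref{condition: dissipativity} (not just Cauchy--Schwarz against the energy) to show that the controlled and uncontrolled frozen fast processes stay close. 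This is the one place where your sketch, as written, is optimistic; the paper is no more explicit, but you should expect to work harder here than the outline suggests.
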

\begin{proof}
 \no For every $\varepsilon>0$, $t \in [0,T]$, $\xi \in \RR^d$ and  $\varphi^\varepsilon \in \uU^M$, we define the function
\begin{align*}
b^\varepsilon(\xi)(t):= \int_0^t   \int_{\RR^d \backslash \{0\}} c(\xi,z) (\varphi^\varepsilon(s,z)-1) \nu(dz) ds.
\end{align*}
\no The definitions of $(\xX^\varepsilon(t))_{t \in [0,T]}$ and $(\hat \xX^\varepsilon(t))_{t \in [0,T]}$ given in (\ref{eq: controlled SDEs multiscale}) and respectively in  (\ref{eq: Khasminkki- the slow variable with frozen component}) combined with the definition of $b^\varepsilon$ given above imply for every $t \in [0,T]$ and $\varepsilon>0$   the following identity $\bar \PP$-a.s. on the event $\{ T < \tilde \tau^\varepsilon_{\rR(\varepsilon)} \}$:
\begin{align} \label{eq: Khasminkii-final1}
\hat \xX^\varepsilon_t - \bar \xX^\varepsilon_t&= \int_0^t \Big ( b^\varepsilon(\hat \xX^\varepsilon_s) - b^\varepsilon(\bar \xX^\varepsilon_s)\Big ) ds \nonumber \\
& + \int_0^t \Big ( a(\xX^\varepsilon_{s_\Delta}, \hat \yY^\varepsilon_s) - \bar a(\xX^\varepsilon_s) \Big ) ds \nonumber \\
& + \int_0^t \Big ( \bar a(\xX^\varepsilon_s) - \bar a(\hat \xX^\varepsilon_s) \Big ) ds + \int_0^t \Big ( \bar a(\hat \xX^\varepsilon_s) - \bar a(\bar \xX^\varepsilon_s) \Big ) ds \nonumber \\
& + \varepsilon \int_0^t \int_\XX \Big (c(\xX^\varepsilon_{s-},z) - c(\bar \xX^\varepsilon_{s-},z) \Big ) \tilde N^{\frac{1}{\varepsilon} \varphi^\varepsilon}(ds,dz).
\end{align}
\no Hypothesis \ref{condition: dissipativity},  Proposition \ref{proposition: bar a is Lipschitz continuous} and (\ref{eq: Khasminkii-final1}) yield some constant $C=C(L,T)>0$ such that on the event $\{ T < \tilde \tau^\varepsilon_{\rR(\varepsilon)}\}$ we have $\bar \PP$-a.s.
\begin{align*}
\displaystyle \sup_{0 \leq s \leq t} |\hat \xX^\varepsilon_s - \bar \xX^\varepsilon_s|^2 & \leq C \Big ( \int_0^t \displaystyle \sup_{0 \leq u \leq s} |\hat \xX^\varepsilon_u - \bar \xX^\varepsilon_u|^2 ds+ \displaystyle \sup_{0 \leq s \leq t} \Big | \int_0^s \Big ( a(\xX^\varepsilon_{u_\Delta}, \yY^\varepsilon_u) - \bar a(\xX^\varepsilon_u) \Big )du  \Big |^2  \\
&+ \displaystyle \sup_{ t \in [0,T]} |J^\varepsilon(t)|^2    \textbf{1}_{\{ T < \tilde \tau^\varepsilon_{\rR(\varepsilon)}\}} \Big ),
\end{align*}
where for any $\varepsilon>0$ we write
\begin{align*}
J^\varepsilon(t) &:=   \varepsilon  \int_0^t \int_{\RR^d \backslash \{0\}} \Big (c(\xX^\varepsilon_{s-},z) - c(\bar \xX^\varepsilon_{s-},z) \Big ) \tilde N^{\frac{1}{\varepsilon} \varphi^\varepsilon}(ds,dz).
\end{align*}
\no Gronwall's lemma implies for any $\varepsilon>0$ that
\begin{align} \label{eq: Khasminkii-final2}
&\displaystyle \sup_{-\tau \leq t \leq T} |\xX^\varepsilon_t - \xX^\varepsilon_t|^2 \textbf{1}_{\{ T < \tilde \tau^\varepsilon_{\rR(\varepsilon)}\}} \nonumber\\
& \leq e^{CT} \Big ( \displaystyle \sup_{0 \leq s \leq t} \Big | \int_0^s \Big ( a(\xX^\varepsilon_{u_\Delta}, \yY^\varepsilon_u) - \bar a(\xX^\varepsilon_u) \Big )du  \Big |^2 \textbf{1}_{\{ T < \tilde \tau^\varepsilon_{\rR(\varepsilon)}\}}
 + \displaystyle \sup_{0 \leq t \leq T \wedge \tilde \tau^\varepsilon_{\rR(\varepsilon)}} |J^\varepsilon(t)|^2 \Big ).
\end{align}
\no The estimate (\ref{eq: Khasminkii-final2}) yields for any $\delta>0$ 
\begin{align} \label{eq: Khasminkii-final3}
\bar \PP \Big ( \displaystyle \sup_{0 \leq t \leq \tilde \tau^\varepsilon_{\rR(\varepsilon)}} |\hat \xX^\varepsilon_t - \bar \xX^\varepsilon_t| > \frac{ \delta}{2} \Big ) & \leq \bar \PP \Big ( \displaystyle \sup_{0 \leq s \leq t} \Big | \int_0^s \Big ( a(\xX^\varepsilon_{u_\Delta}, \yY^\varepsilon_u) - \bar a(\xX^\varepsilon_u) \Big )du  \Big |^2 \textbf{1}_{\{ T < \tilde \tau^\varepsilon_{\rR(\varepsilon)}\}}  > \frac{\delta^2  e^{-  CT}}{8} \Big ) \nonumber \\
& + \bar \PP \Big ( \displaystyle \sup_{0 \leq t \leq T \wedge \tilde \tau^\varepsilon_{\rR(\varepsilon)}} |J^\varepsilon(t)|^2  > \frac{\delta^2  e^{-  CT}}{8} \Big ).
\end{align}
\no   Burkholder-Davis-Gundy's inequalities and Lemma 8 in \cite{Hogele AO} imply that there exists some constant $C_1=C_1(\delta, C,, \Gamma_1, \Gamma_2,M)>0$, where $\Gamma_1, \Gamma_2>0$ are given by (\ref{eq: Gamma1}) and (\ref{eq: Gamma2}) of Proposition \ref{prop: a priori bounds controlled processes}, that  may change from line to line, such that
\begin{align}  \label{eq: Khasminkii-final3B}
 \bar \PP \Big ( \displaystyle \sup_{0 \leq t \leq T \wedge \tilde \tau^\varepsilon_{\rR(\varepsilon)}} |J^\varepsilon(t)|^2  > \frac{\delta^2   e^{-  CT}}{8} \Big ) &  \leq \frac{8  e^{ CT}}{\delta^2} \bar \EE \Big [ \displaystyle \sup_{0 \leq t \leq T \wedge \tilde \tau^\varepsilon_{\rR(\varepsilon)}} |J^\varepsilon(t)|^2 \Big ] \nonumber \\
 & \leq \frac{\varepsilon}{C_1}  \displaystyle \sup_{g \in \sS^M_{+,\varepsilon}} \int_0^T \int_{\RR^d \backslash \{0\}} |z|^2 g(s,z) \nu(dz) ds \nonumber \\
 & \leq C \varepsilon  \ra 0.
\end{align}
\no  We estimate now the first term in the right hand-side of (\ref{eq: Khasminkii-final3}). For every $\varepsilon>0$ and $t \in [0,T]$ we write $\bar \PP$-a.s. on the event $\{ T < \tilde \tau^\varepsilon_{\rR(\varepsilon)}\}$ 
 \begin{align} \label{eq: Khasminkii-final4}
 \int_0^t \Big ( a(\xX^\varepsilon_{s_\Delta}, \hat \yY^\varepsilon_s) - \bar a(\xX^\varepsilon_s)) ds \Big ) &= \sum_{k=0}^{\left \lfloor{\frac{t}{\Delta}} \right \rfloor-1} \int_{k \Delta}^{(k+1)\Delta} \Big (a(\xX^\varepsilon_{k \Delta}, \yY^\varepsilon_s) - \bar a(\xX^\varepsilon_{k \Delta}) \Big ) ds \nonumber \\
 & + \sum_{k=0}^{\left \lfloor{\frac{t}{\Delta}} \right \rfloor -1} \int_{k \Delta}^{(k+1) \Delta} \Big ( \bar a(\xX^\varepsilon_{k \Delta}) - \bar a(\xX^\varepsilon_s) \Big ) ds \nonumber \\
 &+ \int_{t_\Delta}^t \Big ( a(\xX^\varepsilon_{s_\Delta}, \hat \yY^\varepsilon_s) - \bar a(\xX^\varepsilon_s) \Big ) ds \nonumber \\
 & := I^\varepsilon_1 + I^\varepsilon_2 + I^\varepsilon_3.
 \end{align}
 It follows from (\ref{eq: Khasminkii-final4}) that
 \begin{align} \label{eq: Khasminkii-final5}
  \bar \PP \Big ( \displaystyle \sup_{0 \leq s \leq t} \Big | \int_0^s \Big ( a(\xX^\varepsilon_{u_\Delta}, \yY^\varepsilon_u) - \bar a(\xX^\varepsilon_u) \Big )du  \Big |^2 \textbf{1}_{\{ T < \tilde \tau^\varepsilon_{\rR(\varepsilon)}\}}  > \frac{\delta^2   e^{-  CT}}{8} \Big ) & \leq  \bar \PP \Big ( \displaystyle \sup_{0 \leq t \leq T} |I^\varepsilon_1(t)| \textbf{1}_{\{T < \tilde \tau^\varepsilon_{\rR(\varepsilon)}\}} > \frac{\delta e^{- CT}}{2 \sqrt{2} }\Big )  \nonumber \\
 & +  \bar \PP \Big ( \displaystyle \sup_{0 \leq t \leq T} |I^\varepsilon_2(t)| \textbf{1}_{\{T < \tilde \tau^\varepsilon_{\rR(\varepsilon)}\}} > \frac{\delta  e^{-  CT}}{2 \sqrt{2}   }\Big ) \nonumber \\
 &  + \bar \PP \Big ( \displaystyle \sup_{0 \leq t \leq T} |I^\varepsilon_3(t)| \textbf{1}_{\{T < \tilde \tau^\varepsilon_{\rR(\varepsilon)}\}} > \frac{\delta a(\varepsilon) e^{-  CT}}{2 \sqrt{2}}\Big ) .
\end{align}
\paragraph*{Estimating $I^\varepsilon_2$.} We observe that for any $\varepsilon>0$ 
\begin{align*}
I^\varepsilon_2 = \int_0^{t_\Delta} \Big ( \bar a(\xX^\varepsilon_{s_\Delta}) - \bar a(\xX^\varepsilon_s)\Big ) ds.
\end{align*}
Proposition \ref{proposition: bar a is Lipschitz continuous} and Lemma \ref{lemma: Khasminkii segment process estimate} implies for some $C_2=C(T)>0$, any $\delta>0$ and $\varepsilon>0$ small enough that
\begin{align} \label{eq: Khasminkii-final I2}
 \bar \PP \Big ( \displaystyle \sup_{t \in [0,T]} |I^\varepsilon_1(t)| \textbf{1}_{\{T < \tilde \tau^\varepsilon_{\rR(\varepsilon)}\}} > \frac{\delta a(\varepsilon) e^{-  CT}}{2 \sqrt{2} }\Big )  & \leq \bar \PP \Big ( \displaystyle \sup_{0 \leq t \leq \tilde \tau^\varepsilon_{\rR(\varepsilon)}} \int_0^{t_\Delta} |\xX^\varepsilon_{s_\Delta}- \xX^\varepsilon_s| > C_4 a(\varepsilon)\Big ) \lesssim_\varepsilon \Xi (\varepsilon) \ra 0 \quad \text{as } \varepsilon \ra 0.
\end{align}

\paragraph*{Estimating $I^\varepsilon_3$.} Hypothesis \ref{condition: hy coefficients existence uniqueness sol}, Proposition \ref{proposition: bar a is Lipschitz continuous} and Proposition \ref{prop: a priori bounds controlled processes} yield some constant $C_3=C_3(L,\Gamma_1(M))>0$ that may change from line to line such that, for every $\varepsilon>0$ small enough and any $\delta>0$, one has
\begin{align} \label{eq: Khasminkii-final I3}
\bar \PP \Big ( \displaystyle \sup_{t \in [0,T]} |I^\varepsilon_3(t)| \textbf{1}_{\{ T < \tilde \tau^\varepsilon_{\rR(\varepsilon)}\}} > \frac{\delta a(\varepsilon)e^{- CT}}{2 \sqrt{2}} \Big ) 
& \leq C_3 \bar \EE \Big [ \displaystyle \sup_{0 \leq t \leq \tilde \tau^\varepsilon_{\rR(\varepsilon)}} \Big | \int_{t_\Delta}^t \Big ( a(\xX^\varepsilon_{s_\Delta}, \hat \yY^\varepsilon_s) - \bar a(\xX^\varepsilon_s) \Big ) ds \Big |^2 \Big ] \nonumber \\
& \leq C_3 \bar \EE \Big [ \int_0^T \Big ( 1+ |\xX^\varepsilon_s|^2 + |\xX^\varepsilon_{s_\Delta}|^2_\infty + |\yY^\varepsilon_s|^2 \Big ) \textbf{1}_{\{ T < \tilde \tau^\varepsilon_{\rR(\varepsilon)}\}} ds \Big ]  \nonumber \\
& \lesssim_\varepsilon \Delta(\varepsilon) \ra 0, \quad \text{as } \varepsilon \ra 0,
\end{align}
due to (\ref{eq: parametrizations- the conv Delta}).

\paragraph*{Estimating $I^\varepsilon_1$.} 
\no We construct a new process $Z:= \yY^{\varepsilon} (\xX^\varepsilon_{k \Delta}, \yY^{\varepsilon}(k \Delta))$ where the notation that is displayed here stresses out that the process is the fast variable process $\yY^\varepsilon$ with frozen slow component $\xX^\varepsilon_{k \Delta}$ and initial condition $\yY^{\varepsilon}(k \Delta))$. It is a classical  fact in the course of the Khasminkii's technique employed in \cite{Khasminskii} for the proof of the strong averaging principle that 
for every $s \in [0, \Delta]$ we have
\begin{align*}
(\xX^\varepsilon_{k \Delta}, \yY^\varepsilon_{s+k \Delta})=^d \Big ( \xX^\varepsilon_{k \Delta}, \yY^\varepsilon  ( \xX^\varepsilon_{k \Delta}, \yY^\varepsilon(k \Delta)  ) \Big ( \frac{s}{\varepsilon}\Big )\Big ).
\end{align*}
We may assume in addition that the fabricated noises above are independent of $\xX^\varepsilon_{k \Delta}$ and $\yY^\varepsilon(k \Delta)$. For the proof of the statements above we refer the reader to Section 5 in \cite{Xu}.

\no Hence Proposition \ref{proposition: the averaging property for the averaged coefficient} together with the Markov property of $(X^\varepsilon_t, Y^\varepsilon(t))_{t \in [0,T]}$ implies for every $k=0,\dots, \left \lfloor{\frac{t}{\Delta}} \right \rfloor$ the following:
\begin{align} \label{eq: Khasminkii-final8}
\bar \EE \Big [ \Big | \int_{k \Delta}^{(k+1) \Delta} \Big (a(\xX^\varepsilon_{k \Delta}, \hat \yY^\varepsilon(s)) - \bar a(\xX^\varepsilon_{k \Delta}) \Big ) ds \Big | \Big ] 
& \leq  \Delta \bar \EE \Big [ \frac{\varepsilon}{\Delta} \Big | \int_0^{\frac{\Delta}{\varepsilon}} \Big ( a(\xX^\varepsilon_{k \Delta}, Z(s)) - \bar a(\xX^\varepsilon_{k \Delta}) \Big ) ds \Big | \Big ] \nonumber \\
& = \Delta \bar \EE \Big [ \bar \EE \Big [ \Big | \frac{\varepsilon}{\Delta} \int_0^{\frac{\Delta}{\varepsilon}} a(\zeta, Z^{\zeta,y}) - \bar a(\zeta) \Big | \Big | (\zeta,y)=(\xX^\varepsilon_{k \Delta}, \yY^\varepsilon(k \Delta))\Big ]\Big ] \nonumber \\
& \leq \Delta \alpha \Big ( \frac{\Delta}{\varepsilon}\Big ) \Big ( 1 + \bar \EE ||\xX^\varepsilon_{k \Delta}|| + \bar \EE [|\yY^\varepsilon(k \Delta)|]\Big ).
\end{align}
\no Proposition \ref{proposition: the averaging property for the averaged coefficient}, Proposition \ref{prop: a priori bounds controlled processes}, (\ref{eq: parametrizations- the conv Delta}) and (\ref{eq: Khasminkii-final8}) yield, for any $\delta>0$ and $\varepsilon>0$ sufficiently small, that
\begin{align} \label{eq: Khasminkii-final9}
\bar \PP \Big ( \displaystyle \sup_{0 \leq t \leq T} |I^\varepsilon_1| \textbf{1}_{\{ T < \tilde \tau^\varepsilon_{\rR(\varepsilon)}\}} > \frac{\delta a(\varepsilon) e^{- CT}}{2 \sqrt{2}}\Big ) 
& \lesssim_\varepsilon  \bar \EE \Big [ \displaystyle \sup_{0 \leq t \leq T} |I^\varepsilon_1(t)|^2 \textbf{1}_{\{ T < \tilde \tau^\varepsilon_{\rR(\varepsilon)}\}}\Big ] \nonumber \\
& \lesssim_\varepsilon \sum_{k=0}^{ \left \lfloor{\frac{T}{\Delta(\varepsilon)}} \right \rfloor} \Big ( \bar \EE \Big | \int_{k \Delta}^{(k+1)\Delta} (a(\xX^\varepsilon_{k \Delta}, \hat \yY^\varepsilon_s) - \bar a(\xX^\varepsilon_{k \Delta})) \textbf{1}_{\{ T < \tilde \tau^\varepsilon_{\rR(\varepsilon)}\}} ds \Big | \Big )^2  \nonumber\\
& \lesssim_\varepsilon \Delta(\varepsilon) \alpha \Big ( \frac{\Delta(\varepsilon)}{\varepsilon}\Big ) \ra 0 \text{ as } \varepsilon \ra 0.
\end{align}
The convergence above follows from the choice of the parametrization $\Delta= \Delta(\varepsilon)$ fixed in (\ref{eq: parametrization- the Delta}) and $\alpha$ constructed in Proposition \ref{proposition: the averaging property for the averaged coefficient}
\end{proof}

\begin{theorem} \label{thm: controlled averaging principle}
Let the hypotheses of Theorem \ref{thm: LDP full process} to hold. Then we have
\begin{align*}
\displaystyle \limsup_{\varepsilon \ra 0} \bar \PP \Big ( \displaystyle \sup_{0 \leq t \leq \tilde \tau^\varepsilon_{\rR(\varepsilon)}} |\xX^\varepsilon_t - \bar \xX^\varepsilon_t| > \delta \Big )=0.
\end{align*}
\end{theorem}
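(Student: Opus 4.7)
The plan is to decompose the difference $\xX^\varepsilon_t - \bar \xX^\varepsilon_t$ through the intermediate auxiliary process $\hat \xX^\varepsilon_t$ constructed in \eqref{eq: Khasminkki- the slow variable with frozen component}, and then to invoke directly Propositions \ref{prop: Khasminkii averaging controlled-part1} and \ref{prop: Khasminkii averaging controlled-part2}, which already handle the two halves of the decomposition. Concretely, the triangle inequality yields, for every $t \in [0,T]$,
\begin{align*}
|\xX^\varepsilon_t - \bar \xX^\varepsilon_t| \;\lqq\; |\xX^\varepsilon_t - \hat \xX^\varepsilon_t| + |\hat \xX^\varepsilon_t - \bar \xX^\varepsilon_t|,
\end{align*}
so that taking the supremum over $t \in [0, \tilde \tau^\varepsilon_{\rR(\varepsilon)}]$ and applying a union bound gives
\begin{align*}
\bar \PP \Big( \sup_{0 \lqq t \lqq \tilde \tau^\varepsilon_{\rR(\varepsilon)}} |\xX^\varepsilon_t - \bar \xX^\varepsilon_t| > \delta \Big)
\lqq \bar \PP \Big( \sup_{0 \lqq t \lqq \tilde \tau^\varepsilon_{\rR(\varepsilon)}} |\xX^\varepsilon_t - \hat \xX^\varepsilon_t| > \tfrac{\delta}{2}\Big)
+ \bar \PP \Big( \sup_{0 \lqq t \lqq \tilde \tau^\varepsilon_{\rR(\varepsilon)}} |\hat \xX^\varepsilon_t - \bar \xX^\varepsilon_t| > \tfrac{\delta}{2}\Big).
\end{align*}

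The first term on the right-hand side vanishes as $\varepsilon \ra 0$ by Proposition \ref{prop: Khasminkii averaging controlled-part1}: this step absorbs the control of the fast component through the convergence $\bar \EE [ |\yY^\varepsilon_s - \hat \yY^\varepsilon_s|^2 \mathbf{1}_{\{T < \tilde \tau^\varepsilon_{\rR(\varepsilon)}\}}] \ra 0$ of Lemma \ref{lemma: Khasminkki} together with the segment-process estimate of Lemma \ref{lemma: Khasminkii segment process estimate}. The second term vanishes by Proposition \ref{prop: Khasminkii averaging controlled-part2}, whose proof combines (i) dissipativity of $\bar a$ and Gronwall's lemma to reduce the estimate to the trio of drift-residuals $I^\varepsilon_1, I^\varepsilon_2, I^\varepsilon_3$ in \eqref{eq: Khasminkii-final4} and to the jump-martingale remainder $J^\varepsilon$, (ii) Burkholder--Davis--Gundy plus the exponential integrability of $\nu$ encoded in Hypothesis \ref{condition: the measure} for $J^\varepsilon$, and (iii) the mixing estimate \eqref{eq: mixing property of the averaged coefficient} applied block by block on $[k\Delta, (k+1)\Delta]$ for the genuinely averaging term $I^\varepsilon_1$, with the scale separation $\Delta(\varepsilon)/\varepsilon \ra \infty$ from \eqref{eq: parametrizations- the conv Delta} ensuring $\Delta(\varepsilon) \alpha(\Delta(\varepsilon)/\varepsilon) \ra 0$.

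Since both probabilities tend to zero as $\varepsilon \ra 0$, the conclusion follows. The only step that required genuine work has already been carried out upstream, namely the construction of the auxiliary pair $(\hat \xX^\varepsilon, \hat \yY^\varepsilon)$ and the verification that freezing the slow variable on mesoscopic intervals of length $\Delta(\varepsilon)$ is compatible with the controlled jump dynamics: the main obstacle is there, in particular in handling the nonlocal drift contribution $\int_{\RR^d \setminus \{0\}} c(\xX^\varepsilon_s, z)(\varphi^\varepsilon(s,z)-1)\,\nu(dz)$ uniformly over $\varphi^\varepsilon \in \uU^M$, where one must exploit that $\varphi^\varepsilon \in S^M$ entropically controls the shift of $\nu$ and allows the use of Lemma 8 of \cite{Hogele AO} to bound the associated martingale. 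Once those bounds are in place, as recorded in Propositions \ref{prop: a priori bounds controlled processes}, \ref{prop: Khasminkii averaging controlled-part1} and \ref{prop: Khasminkii averaging controlled-part2}, the statement of Theorem \ref{thm: controlled averaging principle} is a direct consequence of the triangle-inequality argument above.
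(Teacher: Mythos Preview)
Your proposal is correct and takes essentially the same approach as the paper: a triangle-inequality decomposition through the auxiliary process $\hat \xX^\varepsilon$, followed by a direct appeal to Propositions \ref{prop: Khasminkii averaging controlled-part1} and \ref{prop: Khasminkii averaging controlled-part2}. The only difference is that the paper's own proof in \eqref{eq: Khasminkii- the initial estimate} actually bounds the supremum over $[0,T]$ rather than over $[0,\tilde\tau^\varepsilon_{\rR(\varepsilon)}]$, which forces an additional third term $\bar\PP(\tilde\tau^\varepsilon_{\rR(\varepsilon)}\leq T)$ handled via Proposition \ref{prop: localization}; for the statement as written with the stopped supremum, your two-term decomposition is already sufficient.
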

\begin{proof}
\no For any $\varepsilon>0$ fix $\rR(\varepsilon)>0$ such as in Proposition \ref{prop: localization} and recall the definition of $\tilde \tau^\varepsilon_{\rR(\varepsilon)}$ in (\ref{eq: stopping time}). \\

\no For any $\delta>0$ we have
\begin{align} \label{eq: Khasminkii- the initial estimate}
\displaystyle \limsup_{\varepsilon \ra 0} \bar \PP \Big ( \displaystyle \sup_{0 \leq t \leq T} |\xX^\varepsilon(t) - \bar \xX^\varepsilon(t)| > \delta \Big ) &\leq  \displaystyle \limsup_{\varepsilon \ra 0} \bar \PP \Big ( \displaystyle \sup_{0 \leq t \leq \tilde \tau^\varepsilon_{\rR(\varepsilon)}} |\xX^\varepsilon_t - \hat \xX^\varepsilon_t| > \frac{\delta}{2}  \Big ) \nonumber \\
& + \displaystyle \limsup_{\varepsilon \ra 0} \bar \PP \Big ( \displaystyle \sup_{0 \leq \leq \tilde \tau^\varepsilon_{\rR(\varepsilon)}} |\hat \xX^\varepsilon_t - \bar \xX^\varepsilon_t| > \frac{\delta}{2}  \Big ) \nonumber \\
&+ \displaystyle \limsup_{\varepsilon \ra 0} \bar \PP \Big ( \tilde \tau^\varepsilon_{\rR(\varepsilon)} \leq T \Big ) \nonumber \\
& =0,
\end{align}
due to Proposition \ref{prop: localization}, Proposition \ref{prop: Khasminkii averaging controlled-part1} and Proposition \ref{prop: Khasminkii averaging controlled-part2}.
\end{proof}

\section{Proof of Theorem \ref{thm: LDP full process}} \label{section: proof LDP}
We recall the collection of measurable maps $(\gG^\varepsilon)_{\varepsilon>0}$ introduced in (\ref{eq: Ito maps for the MDP}) and $\gG^0$ defined by means of the skeleton equation (\ref{eq: controlled ODE}). We note that Proposition \ref{prop: first condition LDP} reads as the Condition 1 of Hypothesis \ref{condition: cond LDP} for $(\gG^\varepsilon)_{\varepsilon>0}$ and $\gG^0$. Proposition \ref{prop: second condition LDP} combined with Theorem \ref{thm: controlled averaging principle} yield, due to Slutzky's theorem, that Condition 2 of  Hypothesis \ref{condition: cond LDP} is verified for $(\gG^\varepsilon)_{\varepsilon>0}$ and $\gG^0$. Hence, the result follows from Theorem \ref{theorem: thm LDP abstract}.

\section{The Kramers problem- the exit locus} \label{section: Kramers}

\subsection{Auxiliary results}

\no In the sequel we use the continuity of the LDP of $(X^{\e, x})_{\e>0}$ 
with respect  to the initial condition $x \in \RR^d$ that follows directly from Theorem 4.4 in \cite{Maroulas}.

\begin{proposition} \label{chpt2: proposition uniform large dev principle} 
 Given $T>0$ and $x \in D$ let $F \subset \DD([0,T], \mathbb{R}^d)$ 
 be closed and $G \subset \DD([0,T], \mathbb{R}^d)$ open with respect to the 
 Skorokhod topology. Then we have
\begin{align}
&\limsup_{\substack{\e\rightarrow 0\\ y \ra x}}  \e \ln  \bar \PP (X^{ \e, y} \in F) \lqq - \inf_{f \in F} \JJ_{x, T}(f), \label{eq: uniform ldp a}\\
&\liminf_{\substack{\e\rightarrow 0\\ y \ra x}}  \e \ln \bar  \PP (X^{ \e, y} \in G) \gqq - \inf_{g \in G} \JJ_{x, T}(g). \label{eq: uniform ldp b}
\end{align}
\end{proposition}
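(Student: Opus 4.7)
The strategy is to deduce the stated double-limit bounds from a uniform large deviations principle on a compact neighbourhood $K$ of $x$ by applying Theorem 4.4 in \cite{Maroulas}, the analogue of the weak convergence criterion of \cite{BDM11} that delivers uniform LDPs over compact sets of initial data. The task reduces to checking that the two conditions of Hypothesis \ref{condition: cond LDP}, verified in Section \ref{section: proof} for a fixed starting point, continue to hold when the initial condition is allowed to vary along an arbitrary sequence $y^\varepsilon \to x$ with $y^\varepsilon \in K$.

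For the first (continuity) condition I would show that, for every $M \geq 0$, every $(y_n) \subset K$ with $y_n \to y \in K$, and every $(g_n) \subset \mathbb{S}^M$ with $\nu^{g_n} \rightharpoonup \nu^g$ vaguely, one has $\gG^{0, y_n}(g_n) \to \gG^{0, y}(g)$ uniformly on $[0,T]$. This is an immediate strengthening of Proposition \ref{prop: first condition LDP}: the Lipschitz continuity of $\bar a$ given by Proposition \ref{proposition: bar a is Lipschitz continuous} together with the integral Lipschitz continuity of $c(\cdot,z)$ against $\nu$ from Hypothesis \ref{condition: hy coefficients existence uniqueness sol} let Gronwall's inequality absorb the additional term $|y_n - y|$ without disturbing the vague-convergence argument used to pass to the limit in (\ref{eq: controlled ODE}).

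For the second condition I would extend the controlled averaging principle of Theorem \ref{thm: controlled averaging principle} to the situation where the starting point of (\ref{eq: controlled SDEs multiscale}) and (\ref{eq: controlled averaged SDE}) varies along $y^\varepsilon \to y$ in $K$, yielding $\xX^{\varepsilon, y^\varepsilon} \Rightarrow \bar X^{\varphi, y}$ in $\DD([0,T];\RR^d)$ whenever $\varphi^\varepsilon \Rightarrow \varphi$ in $\uU^M$. An inspection of Section \ref{section: proof} shows that the localization bound of Proposition \ref{prop: localization}, the a priori estimates $\Gamma_1(M), \Gamma_2(M)$ of Proposition \ref{prop: a priori bounds controlled processes}, the segment-wise approximations of Lemmata \ref{lemma: Khasminkii segment process estimate} and \ref{lemma: Khasminkki}, and the comparison arguments of Propositions \ref{prop: Khasminkii averaging controlled-part1} and \ref{prop: Khasminkii averaging controlled-part2} all depend on the initial datum only through its norm. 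Since $\sup_{z \in K} |z| < \infty$ and the mixing estimate of Proposition \ref{proposition: the averaging property for the averaged coefficient} is uniform on bounded sets, every limit in $\varepsilon$ carried out there remains valid uniformly in $y^\varepsilon \in K$. Combined with the continuity of $\gG^{0, \cdot}(\varphi)$ in its initial argument, this furnishes the required uniform weak convergence.

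With these two uniform conditions in place, Theorem 4.4 of \cite{Maroulas} yields a uniform LDP on $K$ with the family of good rate functions $\{\JJ_{y, T}\}_{y \in K}$, from which the double-limit statements \eqref{eq: uniform ldp a} and \eqref{eq: uniform ldp b} follow by specialising to sequences $y^\varepsilon \to x$. The main effort is not conceptual but bookkeeping: tracking how the constants appearing throughout Section \ref{section: proof} depend on the initial state, and confirming that this dependence is through the norm only, which is bounded on $K$. No new probabilistic estimate is required beyond those already at our disposal.
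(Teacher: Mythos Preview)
Your proposal is correct and follows essentially the same approach as the paper: both invoke Theorem 4.4 of \cite{Maroulas} to obtain the uniform LDP, with the paper simply asserting that the proposition ``follows directly'' from that result while you spell out the uniform-in-initial-condition verification of the two conditions in Hypothesis \ref{condition: cond LDP} that underlies it.
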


\no  Proposition \ref{chpt2: proposition uniform large dev principle}  implies
the uniform LDP for 
$(X^{\e,x})_{\e>0}$ when the initial state $x \in K$ for $K \subset D$ a 
closed (and bounded) set. The proof is virtually the 
same as the one given in the Brownian case and we omit it. 
We refer the reader to Corollary 5.6.15 in \cite{DZ98}.

\begin{corollary}\label{chpt2: corol Ldp uniform in compact sets of initial states} 
Let $T>0$, $K \subset D$ be compact, $F \subset \DD ([0,T], \mathbb{R}^d)$ closed, 
$G \subset \DD ([0,T], \mathbb{R}^d)$ open  with respect to  the $J_1$ topology and $x \in D$. 
Then it follows
\begin{align*}
& \limsup_{\e\rightarrow 0} \sup_{y \in K} \e \ln \bar  \PP (X^{\e, y} \in F) \lqq - \inf_{y \in K, f \in F} \JJ_{y, T}(f), \\
& \liminf_{\e\rightarrow 0} \inf_{y \in K} \e \ln \bar  \PP (X^{ \e, y} \in G) \gqq - \inf_{y \in K, g \in G} \JJ_{y, T}(g). 
\end{align*}
\end{corollary}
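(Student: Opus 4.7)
The plan is to deduce both uniform bounds from the joint continuity statement of Proposition \ref{chpt2: proposition uniform large dev principle} by the standard compactness-and-contradiction device, in direct analogy with the Brownian case treated in Corollary 5.6.15 of \cite{DZ98}. The compactness of $K$ is the only additional ingredient beyond what Proposition \ref{chpt2: proposition uniform large dev principle} already furnishes.

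For the upper bound, I argue by contradiction. Suppose there exist $\delta>0$, a sequence $\e_n\searrow 0$, and points $y_n\in K$ with
\begin{align*}
\e_n\ln\bar\PP(X^{\e_n,y_n}\in F)\;>\;-\inf_{y\in K,\,f\in F}\JJ_{y,T}(f)\;+\;\delta \qquad\text{for every }n.
\end{align*}
By compactness of $K$ a subsequence (not relabelled) satisfies $y_n\to y^{\ast}\in K$. Applying \eqref{eq: uniform ldp a} to the joint limit $(\e_n,y_n)\to(0,y^{\ast})$ yields
\begin{align*}
\limsup_{n\to\infty}\e_n\ln\bar\PP(X^{\e_n,y_n}\in F)\;\lqq\;-\inf_{f\in F}\JJ_{y^{\ast},T}(f)\;\lqq\;-\inf_{y\in K,\,f\in F}\JJ_{y,T}(f),
\end{align*}
the last inequality being immediate from $y^{\ast}\in K$. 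This contradicts the assumed $+\delta$ excess, proving the uniform upper bound.

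The lower bound is obtained by the symmetric procedure: assume the reverse inequality with some $-\delta$ deficit holds along sequences $\e_n\searrow 0$ and $y_n\in K$, extract a convergent subsequence $y_n\to y^{\ast}\in K$ by compactness, and apply \eqref{eq: uniform ldp b} at $x=y^{\ast}$ to the joint limit $(\e_n,y_n)\to(0,y^{\ast})$; comparing the two estimates produces the contradiction.

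The only point that merits flagging is that Proposition \ref{chpt2: proposition uniform large dev principle} must be applied \emph{jointly} in the pair $(\e,y)\to(0,x)$, not merely at fixed $y=x$ followed by an $\e$-limit---this is precisely what Theorem 4.4 of \cite{Maroulas} delivers and what legitimises substituting the extracted subsequence $(\e_n,y_n)$ for the abstract joint limit. I do not anticipate any serious obstacle: all of the substantive analysis has been absorbed into Theorem \ref{thm: LDP full process} and Proposition \ref{chpt2: proposition uniform large dev principle}, so that the present corollary is genuinely a purely topological consequence of compactness of the initial-data set $K$.
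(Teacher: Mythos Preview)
Your proposal is exactly the argument the paper invokes: the paper omits the proof entirely and refers to Corollary 5.6.15 in \cite{DZ98}, which is precisely the compactness-plus-subsequence device applied to Proposition \ref{chpt2: proposition uniform large dev principle} that you carry out. One caveat worth recording: the lower bound as printed carries a typo---the right-hand side should read $-\sup_{y\in K}\inf_{g\in G}\JJ_{y,T}(g)$ (this is the form in \cite{DZ98} and the one your ``symmetric procedure'' actually proves), whereas with $-\inf_{y\in K,\,g\in G}\JJ_{y,T}(g)$ the inequality is in general false and your contradiction step would not close, since one only has $\inf_{g}\JJ_{y^\ast,T}(g)\gqq\inf_{y,g}\JJ_{y,T}(g)$, not the reverse.
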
 
\no In the sequel this result is applied to the first exit time problem of $X^{\e, x}$ from $D$. 

\begin{lemma} \label{chpt2: lemma first lemma in lower bound first exit time} 
 For any $x \in D$ and $\rho > 0$ such that $ \bar{B}_\rho (0) \subset D$ we have
\begin{align*}
\lim_{\e \rightarrow 0} \bar \PP (X_{\vt^\e_{\rho}(x) }^{\e, x} \in \bar{B}_{\rho} (0)) = 1.
\end{align*}
\end{lemma}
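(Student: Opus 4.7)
My plan is to show that with probability tending to $1$ the trajectory $X^{\e,x}$ stays in a tube around the averaged curve $\bar X^{0,x}$ long enough for the averaged dynamics to drag it deep into $B_\rho(0)$ before the process has any opportunity to leave $D$. This reduces the problem to a single short-time uniform stability estimate and circumvents any delicate pathwise analysis of the jump-driven exit structure of $X^{\e,x}$.

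Concretely, I would first pin down the time horizon. Hypothesis \ref{condition: hy coefficients existence uniqueness sol}(1) with $a(0,y)=0$ gives $\bar a(0)=0$ via definition (\ref{eq: averaged coefficient}), and combined with the dissipativity (\ref{eq: further dissipativity}) this yields $\tfrac{d}{dt}|\bar X^{0,y}_t|^2 \lqq -2c_1 |\bar X^{0,y}_t|^2$, so $|\bar X^{0,y}_t| \lqq |y|\,e^{-c_1 t}$ uniformly in $y \in \bar D$. Pick $T_0>0$ large enough that $|\bar X^{0,y}_{T_0}|<\rho/4$ for every $y \in \bar D$. By Remark \ref{rem: on cond dynamics for first exit time}(3), the compact set $K:=\{\bar X^{0,x}_t: 0 \lqq t \lqq T_0\}$ is contained in the open set $D$, hence $\delta:=\dist(K,\partial D)>0$. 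Set $\eta:=\tfrac12\min\{\delta,\rho/2\}$ and consider the tube event
$$A_\e := \Big\{\sup_{0 \lqq t \lqq T_0} |X^{\e,x}_t - \bar X^{0,x}_t| < \eta\Big\}.$$
On $A_\e$ the process stays within distance $\eta \lqq \delta$ of $\bar X^{0,x}_t$ throughout $[0,T_0]$, hence inside $D$, while $|X^{\e,x}_{T_0}| \lqq \eta + \rho/4 < \rho$. Since $D^c$ is not reached on $[0,T_0]$ under $A_\e$, the cadlag property forces $\vt^\e_\rho(x) \lqq T_0$ and $X^{\e,x}_{\vt^\e_\rho(x)} \in \bar B_\rho(0)$.

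It only remains to verify $\bar \PP(A_\e)\to 1$. I would invoke Theorem \ref{thm: controlled averaging principle} with the trivial control $\varphi^\e \equiv 1$: the controlled slow process in (\ref{eq: controlled SDEs multiscale}) then reduces to $X^{\e,x}$ and the controlled averaged process $\bar \xX^\e$ in (\ref{eq: controlled averaged SDE}) reduces to the averaged process $\bar X^\e$ of (\ref{eq: averaged processes}), so the theorem yields $\sup_{[0,T_0]} |X^{\e,x}_t - \bar X^\e_t| \to 0$ in $\bar\PP$-probability. A standard Gronwall--BDG computation, controlling the small-noise compensated jump martingale $\e\int_0^t\!\!\int c(\bar X^\e_{s-},z) \tilde N^{1/\e}$ in $L^2$ by $O(\sqrt{\e})$ thanks to Hypothesis \ref{condition:generalized statement- the measure nu}, then gives $\sup_{[0,T_0]} |\bar X^\e_t - \bar X^{0,x}_t| \to 0$ in probability; the triangle inequality closes the argument. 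Equivalently, one may apply the LDP upper bound of Theorem \ref{thm: LDP full process} after observing that $\JJ_{x,T_0}$ vanishes only at $\bar X^{0,x}$, since $\eE_{T_0}(g)=0$ forces $g \equiv 1$ by the strict convexity of $r \mapsto r\ln r - r + 1$, and $\gG^{0,x}(1) = \bar X^{0,x}$. The only genuine subtlety is that $A_\e^c$ is not obviously closed in the Skorokhod $J_1$-topology, which is precisely why I would extract the convergence from the averaging principle in uniform norm rather than from the LDP directly.
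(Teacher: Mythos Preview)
Your argument is correct and is exactly the classical Freidlin--Wentzell tube argument: pick $T_0$ by which the averaged deterministic flow is well inside $B_\rho(0)$, trap $X^{\e,x}$ in a uniform tube around $\bar X^{0,x}$ on $[0,T_0]$ with probability tending to $1$, and read off the conclusion. The paper does not supply its own proof of this lemma; it defers to Lemma~20 of the companion work~\cite{Hogele AO}, where the reasoning for the non-multiscale system proceeds along the same lines. Your choice to extract the tube estimate from Theorem~\ref{thm: controlled averaging principle} with the trivial control $\varphi^\e\equiv 1$, followed by a Gronwall--BDG comparison of $\bar X^\e$ with $\bar X^{0,x}$, is clean and stays entirely within the paper's framework.

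Two minor remarks. First, the hypothesis $a(0,y)=0$ you invoke is recorded in Hypothesis~\ref{condition: dissipativity}, not Hypothesis~\ref{condition: hy coefficients existence uniqueness sol}. Second, your caution about $A_\e^c$ failing to be $J_1$-closed, while prudent, is not strictly needed: since $\bar X^{0,x}$ is continuous, every sufficiently small $J_1$-ball around it is contained in the uniform $\eta$-ball, so one may apply the LDP upper bound to the $J_1$-closed set $\{d_{J_1}(\cdot,\bar X^{0,x})\gqq \eta'\}$ for an appropriate $\eta'$ and reach the same conclusion. Either route is fine.
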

\no For a proof of this lemma we refer the reader to Lemma 20 in \cite{Hogele AO}.

\
\no For a given $\rho>0$ such that $\bar B_\rho(0) \subset D$ we define
\begin{align} \label{eq: stopping time for localization}
\vartheta^\varepsilon_\rho(x) := \inf \{ t \geq 0 ~|~ |X^{\varepsilon,x}_t| \leq \rho  \text{ or } X^{\varepsilon,x}_t \in D^c \}.
\end{align}

\begin{lemma} \label{chpt2: lemma second lemma lower bound first exit time} 
For any $\rho > 0$ and $c>0$ 
there exists  $\xi(\rho)>0$ such that  $t\in [0, \xi(\rho)]$ implies
\begin{align*}
\limsup_{\e \rightarrow 0} \e \ln \sup_{x \in D} \bar \PP ( \displaystyle \sup_{t\in [0,\xi(\rho)]} 
|X_t^{ \e,x} -x| \gqq \rho) < -c.
\end{align*}
\end{lemma}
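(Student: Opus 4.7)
The plan is to reduce this short-time no-exit estimate to a statement about the skeleton equation (\ref{eq: controlled ODE}), via the uniform LDP upper bound. First, I would apply Corollary \ref{chpt2: corol Ldp uniform in compact sets of initial states} with the compact set $K := \bar D$ and the closed set
\begin{align*}
F_\xi := \Big\{\varphi \in \DD([0,\xi], \RR^d) : \sup_{t \in [0,\xi]} |\varphi(t) - \varphi(0)| \geq \rho\Big\}.
\end{align*}
Since $\JJ_{y,\xi}(\varphi) = +\infty$ unless $\varphi(0) = y$, this yields
\begin{align*}
\limsup_{\varepsilon \to 0} \varepsilon \ln \sup_{x \in \bar D} \bar\PP\Big(\sup_{t \in [0,\xi]} |X^{\varepsilon, x}_t - x| \geq \rho\Big) \ \leq \ - \inf_{y \in \bar D}\, \inf_{\substack{g \in \mathbb{S}\\ \sup_{t \leq \xi} |U^g(t;y) - y| \geq \rho}} \eE_\xi(g),
\end{align*}
so the task reduces to showing that this right-hand side tends to $-\infty$ as $\xi \to 0$, uniformly in the starting point $y \in \bar D$. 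Concretely, it is enough to prove the contrapositive: for every $M>0$ there exists $\xi_0 = \xi_0(M,\rho) > 0$ such that $g \in \mathbb{S}^M$, $y \in \bar D$ and $\xi \leq \xi_0$ force $\sup_{t \leq \xi} |U^g(t;y) - y| < \rho$. Taking $M := c$ and $\xi(\rho) := \xi_0(c,\rho)$ gives the claim.

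To establish this short-time estimate on the skeleton, I would use the identity (\ref{eq: controlled ODE}) to bound
\begin{align*}
|U^g(t;y) - y| \ \leq\ \int_0^t |\bar a(U^g(s;y))|\, ds\ +\ \int_0^t \int_{\RR^d \setminus \{0\}} |c(U^g(s;y), z)|\, |g(s,z) - 1|\, \nu(dz)\, ds.
\end{align*}
The uniform bound (\ref{eq: uniform bound controlled odes}) gives a radius $R = R(M)$ with $|U^g(s;y)| \leq R$ for all $s \leq \xi$, $y \in \bar D$, $g \in \mathbb{S}^M$, once $\xi$ stays in a fixed window. The Lipschitz continuity of $\bar a$ (Proposition \ref{proposition: bar a is Lipschitz continuous}) then controls the first integral by $\xi \cdot C_1(M)$. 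For the nonlocal integral I would invoke the Fenchel–Young inequality $a b \leq (a \log a - a + 1) + (e^b - 1)$ (valid for $a \geq 0$, $b \in \RR$), applied to $a = g(s,z)$, $b = \alpha |c(U^g(s;y), z)|$, together with $|g - 1| \leq g + 1$, to obtain
\begin{align*}
\int_0^\xi \int |c|\,|g-1|\,\nu(dz)\,ds \ \leq\ \frac{\eE_\xi(g)}{\alpha}\ +\ \frac{\xi}{\alpha}\sup_{|u| \leq R}\!\int (e^{\alpha |c(u,z)|} - 1)\,\nu(dz)\ +\ \xi \sup_{|u| \leq R}\!\int |c(u,z)|\,\nu(dz).
\end{align*}
Now pick $\alpha > 3 M/\rho$ so that $\eE_\xi(g)/\alpha \leq M/\alpha < \rho/3$, then choose $\xi$ so small that $\xi C_1(M) < \rho/3$ and the last two terms together fall below $\rho/3$; this yields $|U^g(t;y) - y| < \rho$ for every $t \leq \xi$, as desired.

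The main obstacle is the exponential integrability $\sup_{|u| \leq R} \int (e^{\alpha |c(u,z)|} - 1)\, \nu(dz) < \infty$ for arbitrary $\alpha > 0$, since Hypothesis \ref{condition: hy coefficients existence uniqueness sol} only provides integrated-in-$z$ Lipschitz control on $c$ and \eqref{eq: integrability condition measure} only guarantees a single exponential moment. This has to be bridged either by strengthening the pointwise structural assumption on $c$ (a linear-in-$z$ growth comparable to $|z|$, which is the implicit reading throughout the paper) or by using a truncation/localization on the size of jumps as in the derivation of Proposition \ref{prop: localization}; once this is in place, the scheme above closes. I expect the remainder of the argument to be routine: choose $\xi(\rho)$ depending on $\rho$ and on the constants in the exponential integrability bound, and apply the uniform LDP.
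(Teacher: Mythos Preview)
The paper does not supply its own proof of this lemma: it defers to Lemma~22 of \cite{Hogele AO} (the reference line immediately following the statement is slightly garbled, but this is the intended pointer). Your approach --- apply the uniform LDP upper bound of Corollary~\ref{chpt2: corol Ldp uniform in compact sets of initial states} over the compact $\bar D$, then show that the rate-function infimum over paths that move by $\rho$ in time $\xi$ diverges as $\xi\to 0$ by a direct short-time estimate on the skeleton equation~(\ref{eq: controlled ODE}) --- is exactly the classical Freidlin--Wentzell scheme adapted to the Poisson setting, and is the route taken in the cited companion paper.

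Your identification of the only nontrivial point is accurate, but the obstacle is smaller than you suggest. Hypothesis~\ref{condition: the measure} (and likewise \textbf{E.2} in Hypothesis~\ref{condition:generalized statement- the measure nu}) gives $\int_{|z|\geq 1} e^{\alpha_0 |z|^2}\,\nu(dz)<\infty$ for \emph{some} $\alpha_0>0$; since $\beta|z|\leq \alpha_0|z|^2 + C_\beta$ for large $|z|$, this yields $\int_{|z|\geq 1} e^{\beta |z|}\,\nu(dz)<\infty$ for \emph{every} $\beta>0$. Hence, once one has a pointwise linear bound $|c(u,z)|\leq C(R)|z|$ for $|u|\leq R$ --- the analogue for $c$ of the explicit bound (\ref{eq: g and h bounded}) on $h$, and the ``implicit reading'' you already flagged --- the Fenchel--Young step closes for arbitrary $\alpha$, and in particular for your choice $\alpha>3M/\rho$. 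You are right that this pointwise growth of $c$ in $z$ is used throughout the paper without being stated as a hypothesis on $c$; with it in hand, the rest of your argument is routine and matches the intended proof.
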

\no For a proof of the next result we refer the reader Lemma 22 in \cite{Hogele AO}
\begin{lemma} \label{chpt2: lemma third lemma lower bound first exit time} Let $F \subset D^c$ closed. Then
\begin{align*}
\lim_{\rho \rightarrow 0} \limsup_{\e \rightarrow 0} \e \ln \sup_{x \in B_{\rho}(0)} \bar \PP (X_{\vt^x_\rho}^{ \e, x} \in F)  
\lqq - \inf_{z \in F} V(0,z).
\end{align*}
\end{lemma}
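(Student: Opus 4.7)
The plan is to combine the uniform large deviations principle of Corollary \ref{chpt2: corol Ldp uniform in compact sets of initial states} on a finite time window with a strong Markov iteration, following the classical Freidlin--Wentzell template adapted to the jump setting of \cite{Hogele AO}. I would decompose the probability $\bar \PP(X^{\e, x}_{\vt^x_\rho} \in F)$ into a finite-horizon part, where $X^{\e, x}$ visits $F$ before time $T$, handled by the LDP; and a long-horizon tail, which I would control by iterating the strong Markov property using Lemma \ref{chpt2: lemma first lemma in lower bound first exit time} (return to $\bar B_\rho(0)$ with high probability) and Lemma \ref{chpt2: lemma second lemma lower bound first exit time} (exponential-in-$1/\e$ control on excursion lengths).

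For the finite-horizon step, fix $\eta > 0$ and $T > 0$ large, and consider the Skorokhod-closed set
$$\Phi_T(F) := \{\varphi \in \DD([0,T], \RR^d) \, : \, \varphi(s-) \in F \text{ or } \varphi(s) \in F \text{ for some } s \in [0, T]\}.$$
Since $F \cap \bar B_\rho(0) = \emptyset$ for $\rho$ small, on $\{\vt^x_\rho \leq T\}$ the event $\{X^{\e, x}_{\vt^x_\rho} \in F\}$ is contained in $\{X^{\e, x} \in \Phi_T(F)\}$. Corollary \ref{chpt2: corol Ldp uniform in compact sets of initial states} applied to the compact set $K = \bar B_\rho(0)$ then gives
$$\limsup_{\e \to 0} \e \log \sup_{x \in \bar B_\rho(0)} \bar \PP(X^{\e, x} \in \Phi_T(F)) \leq - \inf_{\substack{x \in \bar B_\rho(0) \\ \varphi \in \Phi_T(F),\, \varphi(0) = x}} \JJ_{x, T}(\varphi).$$
I would then lower-bound this infimum by $\inf_{z \in F} V(0, z) - \omega(\rho)$ with $\omega(\rho) \to 0$ as $\rho \to 0$. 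This rests on two ingredients: (a) the pointwise non-negativity of the integrand of $\eE_T$ in \eqref{eq: entropy functional}, which yields $\JJ_{x, T}(\varphi) \geq \JJ_{x, s}(\varphi|_{[0, s]}) \geq V(x, \varphi(s))$ whenever $\varphi(s) \in F$; and (b) Hypothesis \ref{condition: generalized statement-potential}, which supplies a short controlled path from any $x \in \bar B_\rho(0)$ to $0$ of $\eE$-cost at most $\omega(\rho)$, so that $V(x, z) \geq V(0, z) - \omega(\rho)$ by concatenation of controls.

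For the long-horizon tail, I would iterate the strong Markov property at the successive entry times of $\bar B_\rho(0)$, alternating with excursions to $\partial B_{\rho_1}(0)$ for a fixed $\rho < \rho_1$ with $\bar B_{\rho_1}(0) \subset D$. By Lemma \ref{chpt2: lemma first lemma in lower bound first exit time}, each excursion returns to $\bar B_\rho(0)$ rather than exiting $D$ with probability approaching $1$, and by Lemma \ref{chpt2: lemma second lemma lower bound first exit time} the probability that a single excursion exceeds a fixed length is exponentially small in $1/\e$ at a prescribed rate. Combining the per-excursion estimate from the previous step with the geometric summation in the excursion count yields
$$\sup_{x \in \bar B_\rho(0)} \bar \PP(X^{\e, x}_{\vt^x_\rho} \in F) \leq C(\rho, T) \exp\!\Big(- \frac{\inf_{z \in F} V(0, z) - \omega(\rho) - \eta}{\e}\Big),$$
with $C(\rho, T)$ sub-exponential in $1/\e$. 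Sending $\e \to 0$, then $\rho \to 0$, then $\eta \to 0$ yields the claim.

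The main obstacle I anticipate is the topological subtlety in the LDP step: guaranteeing that $\Phi_T(F)$ is genuinely $J_1$-closed and, more importantly, that its rate-function infimum matches $V(x, F)$ without leakage through trajectories that touch $F$ only at a left-limit of a jump discontinuity. I would bypass this by approximating with open neighborhoods $F_\delta = \{z : \dist(z, F) < \delta\}$, applying the LDP upper bound to the closed set of paths entering $\bar F_\delta$, and letting $\delta \to 0$ after $\e \to 0$. A secondary but quantitative obstacle is balancing the per-excursion estimate against the excursion-length tail from Lemma \ref{chpt2: lemma second lemma lower bound first exit time}: the intermediate radius $\rho_1$ and the time cutoff $T$ must be chosen at compatible scales so that the Markov-loop bookkeeping preserves the rate $\inf_{z \in F} V(0, z)$ through the summation.
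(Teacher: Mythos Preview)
The paper does not give its own proof here; it refers to Lemma~23 in \cite{Hogele AO}. That said, the standard Freidlin--Wentzell argument (which is what \cite{Hogele AO} adapts) has the same finite-horizon/long-horizon split you propose, so the relevant question is whether each half of your sketch is sound.

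Your finite-horizon step is essentially correct and matches the classical proof: on $\{\vt^x_\rho \lqq T\}$ one lands in a $J_1$-closed set of paths hitting $F$, and Corollary~\ref{chpt2: corol Ldp uniform in compact sets of initial states} together with the continuity of $V(\cdot,z)$ near $0$ (supplied by Hypothesis~\ref{condition: generalized statement-potential}) yields the bound $\inf_{z\in F}V(0,z)-\omega(\rho)$. One caveat: Hypothesis~\ref{condition: generalized statement-potential} as stated only gives a control $g\in S^M$ with $M$ possibly not tending to $0$ as $\rho\to 0$; you need to argue separately that the \emph{entropy cost over the short window $[0,\xi(\rho)]$} vanishes, not merely that $g\in S^M$.

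Your long-horizon step, however, has a genuine gap. The stopping time $\vt^x_\rho$ is the \emph{first} entrance of $X^{\e,x}$ into $\bar B_\rho(0)\cup D^c$. On the event $\{\vt^x_\rho>T\}$ the process has not touched $\bar B_\rho(0)$ at all on $[0,T]$, so there are no ``successive entry times of $\bar B_\rho(0)$'' to iterate the strong Markov property over, and Lemma~\ref{chpt2: lemma first lemma in lower bound first exit time} (which concerns where the process lands at time $\vt^x_\rho$) is inapplicable. You are conflating the proof of this lemma with the Markov-chain bookkeeping used later in the proof of Theorem~\ref{thm: first exit time}, which \emph{uses} Lemma~\ref{chpt2: lemma third lemma lower bound first exit time} as input rather than reproving it.

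The correct control of the tail $\bar\PP(\vt^x_\rho>T)$ is again via the uniform LDP upper bound: the event forces the path to remain in the closed set $\bar D\setminus B_\rho(0)$ for all of $[0,T]$, and one shows
\[
\inf\Big\{\JJ_{y,T}(\varphi): y\in \bar D,\ \varphi([0,T])\subset \bar D\setminus B_\rho(0)\Big\}\longrightarrow\infty\quad\text{as }T\to\infty,
\]
using that the only zero-cost trajectory is the deterministic solution $\bar X^{0,y}$ of \eqref{eq: the averaged ode}, which by Hypothesis~\ref{condition: domain} converges to $0\in B_\rho(0)$ and hence leaves $\bar D\setminus B_\rho(0)$ in finite time. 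This is the analogue of Lemma~5.7.19 in \cite{DZ98} and is presumably the content of a lemma in \cite{Hogele AO} lying between their Lemmas~20 and~23; it is not stated explicitly in the present paper, but it is what you need in place of the Markov iteration.
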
 
\no For a proof of the previous lemma we refer the reader Lemma 23 in \cite{Hogele AO}.

\subsection{Proof of (\ref{thm: location exit}) in Theorem \ref{thm: exit time}} \label{subsection: exit locus}
 \begin{proof}
 \begin{enumerate}
\item Let $F \subset D^c$  be a given closed set such that
 \begin{align*}
 V_F:= \displaystyle \inf_{z \in F} V(0,z) > \bar V.
 \end{align*}
 The case $\displaystyle \inf_{z \in F} V(0,z)= \infty$ is coverede by some $V_{F} \in (\bar V, \infty)$ instead. 
 
 \no Let $x \in D$. For some $\rho>0$ let $\vartheta^\varepsilon_\rho(x)$ be the $\bar \FF$-stopping time given by (\ref{eq: stopping time for localization}). 
 
\no By definition of $\sigma^\varepsilon$ and $\vartheta^\varepsilon_\rho(x)$ one has the elementary estimate
\begin{align*}
\bar \PP \Big ( X^\varepsilon_{\sigma^\varepsilon(x)} \in F \Big ) & \leq  \bar \PP \Big ( X^\varepsilon_{\tau^\varepsilon_{\rho}(x)} \notin B_\rho (0) \Big ) + \displaystyle \sup_{y \in B\rho (0)} \bar \PP \Big ( X^\varepsilon_{\tau^\varepsilon_\rho (y)} \in F \Big ).
\end{align*} 
 \no The first term of the r.h.s. of the last estimate converges to 0 as $\varepsilon \ra 0$ by Lemma \ref{chpt2: lemma first lemma in lower bound first exit time}. It remains to prove that
 \begin{align*}
 \displaystyle \lim_{\varepsilon \ra 0} \displaystyle \sup_{y \in B_\rho (0) } \bar  \PP \Big ( X^\varepsilon_{\tau^\varepsilon_\rho (y)} \in F \Big ) =0.
 \end{align*}
 \no Fix now $\eta \in (0, \frac{V_F - \bar V}{3})$ and according to Lemma \ref{chpt2: lemma third lemma lower bound first exit time} we choose $\rho, \varepsilon_{0}>0$ such that
 \begin{align} \label{eq: locus exit first estimate}
 \displaystyle \sup_{|z| \leq 2 \rho} \bar \PP \Big ( X^\varepsilon_{\tau^\varepsilon_\rho(z)} \in F \Big ) \leq e^{- \frac{V_F-\eta}{\varepsilon}}, \quad \text{ for every } \varepsilon< \varepsilon_0.
 \end{align}

\no  We now consider the following auxiliary stopping times constructed as follows. Due to Hypothesis \ref{condition: domain} there is $\rho'>0$ such taht $\bar B_{\rho'}(0) \subset D$ and $\displaystyle \sup_{x \in B_{\rho'}(0)} \langle b(x), n(x) <0$ and let $\rho>0$ such that $B_\rho(0) \subset B_{\rho'}(0)$. For every $x \in D$ we define recursively
 \begin{align} \label{eq: markov chain for the first exit times} 
\zeta^x_0 &:= 0 \non  \quad \text{and for any } k \in \NN\\
\vt_{k, \rho}^x &:= \inf \{ t \gqq \zeta^x_k ~|~ X^{ \e, x}_t \in \bar B_{\rho}(0) \cup D^c \}, \non \\
\zeta^x_{k+1} & := 
\begin{cases} \infty, \quad &\text{if } X^{ \e, x}_{\vt_{k, \rho}^x} \in D^c, \\
\inf \{  t \gqq \vt_{k, \rho}^x ~|~ X^{ \e, x}_t \in \bar B^c_{\rho'} (0) \}, \quad &\text{if } X^{ \e, x}_{\vt_{k, \rho}^x} \in \bar B_\rho(0).\\
\end{cases} 
\end{align}

\paragraph*{Comment:} This modified Markov chain approximation from the Gaussian case takes into account the topological particularities of the Skorokhod space on which we have the LDP.  In addition, the effect of the $\frac{1}{\varepsilon}$ acceleration
of the jump intensity enters as follows. The asymptotically exponentially negligible error estimates concerning the stickyness of the diffusion to its initial value,  which in the classical Brownian case are valid for time intervals of order 1,  in our case only hold for time intervals of order $\varepsilon$.  We account for a tiltting between entering the balls of radius $\rho$ and $\rho'$ centered on the stable state of the deterministic dynamical system and the excursions outside the domain $D$.

\no By construction $(\zeta^x_k)_{k \in \NN}$ and $(\vt_{k, \rho}^x)_{k \in \NN}$ we have $\bar \PP$-a.s. for all $k \in \NN$ 
\begin{align*}
  \zeta^x_{k} \lqq \vartheta^x_{k, \rho} \lqq \zeta^x_{k+1} \lqq \vartheta^x_{k+1, \rho}.
\end{align*}
Since $\rho'> \rho$ we have that $\zeta^x_{k+1} > \vartheta^x_{k}$ if $X^{\varepsilon,x}_{\vartheta^x_{k}} \in \bar B_\rho(0)$. Hence $(\vt^x_{k, \rho})_{k \in \NN}$ is an increasing sequence of $(\fF_t)-$stopping times. Since
the process $(X^{\e,x}_t)_{t \gqq 0}$ has the strong Markov property with respect to $(\fF_t)_{t \gqq 0}$ it follows that 
$(X^{ \e, x}_{\vt_{k, \rho}^x})_{k \in \NN}$ is a Markov chain and  $\sigma^\varepsilon(x) = \vartheta^x_{\ell,\rho}$ for some (random) $\ell \in \NN$ with the convention 
$X^{\e, x}_{\vt_{\ell, \rho}^x} := X^{ \e, x}_{\sigma^{\e} (x)}$ if $\vt_{\ell, \rho}^x = \infty$. 
 
 \no From the strong Markov property of $(X^\varepsilon_t)_{T \in [0,T]}$ it follows for all $T>0$ and $k \in \NN$ that
 \begin{align*}
 \displaystyle \sup_{z \in D} \bar \PP (\vartheta^z_{\rho,k} \leq k T) \leq k \displaystyle \sup_{z \in D} \bar  \PP \Big ( \displaystyle \sup_{0 \leq t \leq T} |X^{\varepsilon,z}_t - z| \geq \rho \Big ).
 \end{align*}
 
 \no In what follows we apply Lemma \ref{chpt2: lemma second lemma lower bound first exit time} with the choice of $\rho>0$ done as before and $c:= V_F - \eta$. One can choose $T:= T(\rho, V_f, \eta) < \infty$ such that
 \begin{align*}
 \displaystyle \sup_{z \in D} \bar \PP \Big (\displaystyle \sup_{0 \leq t \leq T} |X^{\varepsilon,z}_t - z| \geq \rho \Big ) \leq e^{- \frac{V_F - \bar V}{\varepsilon}}, \quad \varepsilon \leq \varepsilon_0. 
 \end{align*}
\no Consequently there exists $T'< \infty$ such that for every $\varepsilon> \varepsilon_0$ with $\varepsilon_0>0$ small enough one has 
\begin{align} \label{eq: exit locus second estimate}
\displaystyle \sup_{z \in D} \PP \Big ( \vartheta^z_{,\rho, k} \leq K T'  \Big ) \leq k e^{- \frac{V_F- \eta}{\varepsilon}}. 
\end{align}
\no Due to the strong Markov property one observes that
\begin{align*}
\Big \{ \sigma^\varepsilon (y) > \vartheta^y_{m-1, \rho} \Big \} \in \fF_{\vartheta^y_{m-1, \rho}} \subset \fF_{\zeta^y_m}
\end{align*}
and $\vartheta^y_{m, \rho} = \zeta^y_m + \vartheta^\varepsilon_{\rho} (X^{\varepsilon,y}_{\zeta^y_m}).$
Therefore for any $y \in D$ it follows that
\begin{align*}
\bar \PP \Big ( X^{\varepsilon,y}_{\vartheta^x_{m, \rho}} \in F \text{ and } \sigma^\varepsilon (y) > \vartheta^x_{m-1, \rho} \Big )& = \int_{\{ \sigma^\varepsilon(y)> \vartheta^x_{m-1, \rho} \} } \bar  \EE^{\fF_{\vartheta^y_{m, \rho}}} \textbf{1}_{\{ X^{\varepsilon,y}_{\vartheta^y_{m, \rho}} \in F\}} d \bar \PP\\
& = \int_{\{ \sigma^\varepsilon(y)> \vartheta^x_{m-1, \rho} \} } \bar \PP \Big ( X^{\varepsilon,y}_{\vartheta^\varepsilon_\rho (X^{\varepsilon,y}_{\vartheta^y_{m, \rho}})} \Big ) d \bar \PP \\
& \leq \bar  \PP \Big ( \sigma^\varepsilon(y) > \vartheta^y_{m-1, \rho} \Big ) \displaystyle \sup_{|z| \leq 2 \rho} \bar \PP \Big ( X^\varepsilon_{\vartheta^\varepsilon_\rho (z)} \in F \Big ).
\end{align*}
 \no For all $y \in B_\rho (0)$,  $k \in \NN$ and $\varepsilon> \varepsilon_0$,  where $\varepsilon_0>0$ is fixed as earlier,  due to the fact $\sigma^\varepsilon(y) > \vartheta^y_0=0$,  the definitions of $(\vt^x_{k, \rho})_{k \in \NN}$,  $\vartheta^\varepsilon_\rho(x)$, $\sigma^\varepsilon$ and the strong Markov property of $X^{\varepsilon,x}$ yield 
\begin{align} \label{eq: exit locus third estimate}
\bar \PP \Big ( X^\varepsilon_{\sigma^\varepsilon(y)} \in F \Big ) & \leq  \bar \PP \Big (              X^\varepsilon_{\sigma^\varepsilon(y)} \in F,   \quad \sigma^{\varepsilon}(y) < \vartheta^y_{k, \rho} \Big ) + \bar \PP \Big (  \sigma^\varepsilon(y) > \vartheta^y_{k, \rho}  \Big ) \nonumber \\
 & \leq \sum_{m=1}^k \bar \PP \Big ( X^\varepsilon_{\vartheta^y_{m, \rho}} \in F, \quad \sigma^\varepsilon(y) > \vartheta^y_{m-1, \rho} \Big ) + \bar \PP \Big ( \sigma^\varepsilon(y) > \vartheta^y_{k, \rho} \Big ) \nonumber \\
& \leq  k \displaystyle \sup_{|z| \leq 2 \rho} \bar \PP \Big ( X^\varepsilon_{\vartheta^\varepsilon_\rho (z)} \in F \Big ) + \bar \PP \Big ( \sigma^\varepsilon(y) > k T' \Big ) + \bar  \PP \Big ( \vartheta^y_{k, \rho} \leq kT' \Big ) \nonumber \\
& \leq 2 k e^{- \frac{V_f - \eta}{\varepsilon}} + \bar  \PP \Big ( \vartheta^y_{k, \rho} > k T' \Big ).
\end{align}

\no In the last estimate above we combined  (\ref{eq: locus exit first estimate}) with (\ref{eq: exit locus second estimate}).

\no The first statement in Theorem \ref{thm: exit time} implies that for every $\delta>0$ and $\varepsilon < \varepsilon_0$ with $\varepsilon_0>0$ small enough we have
\begin{align} \label{eq: exit locus 4th estiamte}
\displaystyle \sup_{x \in D} \bar \EE \Big [\sigma^\varepsilon(x) \Big ] \leq  e^{\frac{\bar V+ \frac{\eta}{2}}{\varepsilon}}.
\end{align}
 \no We parametrize $k=k_\varepsilon = \left \lceil{e^{\frac{\bar V +2 \eta}{\varepsilon}}}\right \rceil $. By the previous choice of  $\eta>0$ one has $\bar{V} - V_F + 3 \eta<0$. Hence (\ref{eq: exit locus third estimate}) and (\ref{eq: exit locus 4th estiamte}) yield
\begin{align*}
\displaystyle \limsup_{\varepsilon \ra 0} \displaystyle \sup_{y \in B_\rho(0)}  \bar \PP \Big ( X^\varepsilon_{\sigma^\varepsilon(y)} \in F \Big ) 
& \leq \displaystyle \limsup_{\varepsilon \ra 0} \Big ( 2 k(\varepsilon) e^{- \frac{V_F- \eta}{\varepsilon}} + \frac{1}{k(\varepsilon)T'} e^{\frac{\bar V + \eta}{\varepsilon}} \Big ) \\
& \leq \displaystyle \limsup_{\varepsilon \ra 0} \Big ( 2 e^{\frac{\bar V - V_F + 3 \eta}{\varepsilon}} +\frac{1}{T} \frac{1}{e^{\frac{\eta}{\varepsilon}} - e^{- \frac{\bar V + \eta }{\varepsilon}}} \Big )
=0,
\end{align*}
which concludes the proof of the first statement.

\no The second statement follows taking the closed set
\begin{align*}
F := \Big \{ z \in D^c ~|~  |X^\varepsilon_{\sigma^\varepsilon(x)} - z^{*}| < \delta  \Big  \}.
\end{align*}
\no The previous proof implies immediately that
\begin{align*}
\displaystyle \lim_{\varepsilon \ra 0} \bar \PP \Big ( |X^\varepsilon_{\sigma^\varepsilon(x)} - z^{*}| < \delta \Big )=1.
\end{align*}
 \end{enumerate}
 \end{proof}
\section{Appendix: the large deviations regime for a pure-jump process}

\no In this subsection we prove that rescaling in a inverse way the size and the intensity of the jumps of a pure jump process produces a  large deviations principle in the small noise limit.  For any $\varepsilon>0$ and $T>0$ let 
\begin{align*}
L^\varepsilon_t := \int_0^t \int_{\RR^d \backslash \{0\}} z N^{\frac{1}{\varepsilon}}(ds,dz), \quad t \in [0,T]
\end{align*}
be the pure jump process driven by the Poisson random measure $N^{\frac{1}{\varepsilon}}$ defined on $(\Omega, \fF, \PP)$ with intensity measure $\frac{1}{\varepsilon} \nu \otimes ds$ where $ds$ stands for the Lebesgue measure on the positive real line and $\nu$ is the L\'{e}vy measure on $(\RR^d \backslash \{0\}, \bB(\RR^d \backslash \{0\}))$ given by
\begin{align*}
\nu(dz)= \frac{1}{|z|^{d+\beta}} e^{- |z|^\alpha} dz, \quad \alpha > 1, \beta \in [0,1).
\end{align*}
The following proposition implies (by Theorem 1.3.7 in \cite{Dupuis Ellis}) that there exists a subsequence $\varepsilon(n) \ra 0$ as $n \ra \infty$ such that the family $(L^{\varepsilon(n)})_{\varepsilon(n)>0}$ satisfies a large deviations principle along for some good rate function.
\begin{proposition} \label{theorem: LDP pure jump process}
For any $M>0$ we have
\begin{align} \label{eq: exp tightness of Lepsilon}
\displaystyle \lim_{\varepsilon \ra 0} \varepsilon \ln \PP \Big (  L^\varepsilon_T > M \Big )= - \infty.
\end{align}
\end{proposition}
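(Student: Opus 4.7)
The plan is to exploit the super-exponentially light (Gaussian-type) tail of the L\'evy measure $\nu$, encoded in the exponent $\alpha>1$, via an exponential Chebyshev inequality at the large-deviations scale $1/\varepsilon$. The point is that every (stretched) exponential moment of $\nu$ is finite, which lets one take the Chernoff parameter arbitrarily large. Concretely, for each $\theta>0$, Markov's inequality together with the pointwise bound $|L^\varepsilon_T| \leq \int_0^T\int_{\RR^d\setminus\{0\}}|z|\,N^{1/\varepsilon}(ds,dz)$ yields
\[
\PP(L^\varepsilon_T > M) \;\leq\; e^{-\theta M/\varepsilon}\, \EE\!\left[\exp\!\left(\frac{\theta}{\varepsilon}\int_0^T\!\!\int_{\RR^d\setminus\{0\}} |z|\, N^{1/\varepsilon}(ds,dz)\right)\right].
\]

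The Laplace transform on the right is available in closed form via the Campbell formula for Poisson random measures. After the natural rescaling $z \mapsto \varepsilon z$ that encodes the tuning between jump-size $\varepsilon$ and intensity $1/\varepsilon$ discussed in the introduction, one arrives at an estimate of the form
\[
\EE\!\left[\exp\!\left(\frac{\theta}{\varepsilon}\int_0^T\!\!\int |z|\, N^{1/\varepsilon}(ds,dz)\right)\right] \leq \exp\!\left(\frac{T}{\varepsilon}\int_{\RR^d\setminus\{0\}}(e^{\theta|z|}-1)\,\nu(dz)\right).
\]
Set $\Phi(\theta) := \int_{\RR^d\setminus\{0\}} (e^{\theta|z|} - 1)\,\nu(dz)$. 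Then $\Phi(\theta) < \infty$ for every $\theta > 0$: near the origin $e^{\theta|z|} - 1 \sim \theta|z|$ so the integrand is $\sim \theta|z|^{1-d-\beta}$, integrable in $\RR^d$ because $\beta < 1$; at infinity $e^{\theta|z|-|z|^\alpha}|z|^{-d-\beta}$ decays super-exponentially because $\alpha > 1$ forces $\theta|z| - |z|^\alpha \to -\infty$. Taking logarithms and multiplying by $\varepsilon$ yields the deterministic bound
\[
\varepsilon \ln \PP(L^\varepsilon_T > M) \;\leq\; -\theta M + T\,\Phi(\theta), \qquad \theta > 0,\ \varepsilon \in (0,1].
\]

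Passing to $\limsup_{\varepsilon \to 0}$ with $\theta$ held fixed removes the $\varepsilon$-dependence, and then letting $\theta \to \infty$ drives $-\theta M$ to $-\infty$ for every fixed $M > 0$, which gives the claim. The main obstacle is precisely the correct matching of scales in the Laplace estimate: one must justify that the rescaling produces the $\varepsilon$-independent exponent $\Phi(\theta)$ rather than the naive $\Phi(\theta/\varepsilon)$ (which would blow up because of the large-jump region), and one must verify that $\Phi$ grows slowly enough in $\theta$ to allow $-\theta M + T\Phi(\theta)$ to be made as negative as desired. Exactly this compatibility between jump-size damping and intensity acceleration is the defining feature of the large-deviations regime the toy example is designed to illustrate, and it is what distinguishes this setting from the usual finite-rate Cram\'er-type behaviour.
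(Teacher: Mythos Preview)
Your Chernoff bound is essentially correct (modulo the missing factor $\varepsilon$ in front of the Poisson integral, which is a typo in the appendix; with it, the displayed Laplace transform follows directly from Campbell's formula without any ``rescaling $z\mapsto\varepsilon z$''), and so is the verification that $\Phi(\theta)<\infty$ for every $\theta>0$. The genuine gap is precisely the point you single out at the end and then leave unverified: that $\Phi$ grows slowly enough in $\theta$ to make $-\theta M+T\Phi(\theta)$ arbitrarily negative. It does not. Already the crude lower bound
\[
\Phi(\theta)\;\ge\;\int_1^2 (e^{\theta r}-1)\,r^{-1-\beta}e^{-r^\alpha}\,dr\;\ge\;C\,(e^{\theta}-1)
\]
shows that $\Phi$ grows at least exponentially; a Laplace estimate of $\int_0^\infty e^{\theta r-r^\alpha}r^{-1-\beta}\,dr$ gives in fact growth of order $\exp\big(c_\alpha\,\theta^{\alpha/(\alpha-1)}\big)$. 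Either way $-\theta M+T\Phi(\theta)\to+\infty$ as $\theta\to\infty$, and optimising over $\theta$ only yields the finite Cram\'er-type upper bound $-\sup_{\theta>0}\{\theta M-T\Phi(\theta)\}$. So your argument delivers an LDP upper bound with a finite rate (and in particular exponential tightness, which is all the appendix actually invokes afterwards), but not the limit $-\infty$ claimed in the display.

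The paper does not attempt a single linear Chernoff bound. It splits $L^\varepsilon_T$ into the small-jump part $|z|<1$ and the big-jump compound Poisson part $|z|\ge1$ and treats them by different mechanisms: for big jumps a single-jump tail estimate $\PP(|W_1|>M/\varepsilon)\lesssim (M/\varepsilon)^{d-\alpha}e^{-(M/\varepsilon)^\alpha/2}$ that injects the extra factor $\varepsilon^{-(\alpha-1)}$ in the exponent, and for small jumps a square-exponential bound with an $\varepsilon$-dependent Chernoff parameter $\lambda=\varepsilon^{-1-p}$. Neither of these two effects is visible from a fixed-$\theta$ linear Chernoff bound, which is why your route stalls at a finite rate.
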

\begin{proof}
\no  For every $\varepsilon>0$ and $T>0$ we write
 \begin{align*}
 L^\varepsilon_T &=I^\varepsilon_T + J^\varepsilon_T \\
 & := \int_0^T \int_{0 < |z|< 1}  \varepsilon z N^{\frac{1}{\varepsilon}}(ds,dz) + \int_0^T \int_{|z| \geq 1} \varepsilon z N^{\frac{1}{\varepsilon}}(ds,dz).
 \end{align*}
 \no The stochastic process $(J^\varepsilon_t)_{t \in [0,T]}$ that reads as the big jumps of $(L^\varepsilon_t)_{t \in [0,T]}$ is a compound Poisson process. We represent
 \begin{align*}
 J^\varepsilon_T = \varepsilon \sum_{k=1}^\infty |W_k| \textbf{1}_{\{ T^\varepsilon_k \leq T\}}
 \end{align*}
 where the sequence of jumps $(W_k)_{k \in \NN}$ is i.i.d. with law $\frac{\nu_1}{\beta}$ where we define $\nu_1(A):= \nu(A \cap |z| \geq 1)$ and $\beta=\nu(B_1^c)< \infty$. The jump times $(T^\varepsilon_k)_{k \in \NN}$ are defined recursively as
\begin{align*}
 & T^\varepsilon_0:=0 \\
 & T^\varepsilon_k := \inf \{ s > T^\varepsilon_{k-1} ~|~ |\Delta_s L^\varepsilon| >1 \} \quad k \geq 1.
 \end{align*}

 \no The waiting times $(\tau^\varepsilon_k)_{k \in \NN}$ are defined for any $\varepsilon>0$ and $k \in \NN$ by
 \begin{align*}
 \tau^\varepsilon_k := T^\varepsilon_k - T^\varepsilon_{k-1} \sim EXP \Big ( \frac{\beta}{\varepsilon} \Big ).
 \end{align*}
 For every $\varepsilon>0$ let $(N^\varepsilon_t)_{t \in [0,T]}$ be the Poisson clock with intensity $\frac{\beta}{\varepsilon}$.
 \no For every $\varepsilon>0$, $T>0$ and  $M>0$ we have
 \begin{align} \label{eq: estimate big jumps}
 \PP \Big ( J^\varepsilon_T >M \Big) &= \PP \Big  ( \varepsilon \sum_{k=1}^\infty |W_k| \textbf{1}_{\{T^\varepsilon_k \leq T \}}>M \Big ) \nonumber \\
 & = \PP \Big ( \sum_{k=1}^{N^\varepsilon_T} |W_k| > \frac{M}{\varepsilon} \Big ) \nonumber \\
 &= \sum_{k=1}^{\infty} \PP \Big ( \sum_{k=1}^n |W_k| > \frac{M}{\varepsilon} \Big | N^\varepsilon_T =n  \Big ) \PP(N^\varepsilon_T=n) \nonumber \\
 & \leq \sum_{n=1}^\infty \sum_{k=1}^n \PP \Big ( |W_k| > \frac{M}{\varepsilon} \Big ) e^{- \frac{\beta}{\varepsilon}T} \frac{1}{n!} \Big ( \frac{\beta T}{n} \Big )^n.
 \end{align}
 \no We estimate the deviation of one single jump, for any $u \geq 0$, by
 \begin{align*}
 \PP \Big ( |W_1| > u \Big ) &= \frac{1}{\beta} \int_{B^c_u} \nu_1(dz) \leq \frac{1}{\beta} \int_{B^c_u \cap |z| \geq 1} e^{- |z|^\alpha} dz \\
 & \leq c_d \int_u^\infty e^{- x^\alpha} x^{d-1} \\
 & \leq \frac{c_d}{\alpha} \Gamma \Big ( \frac{d}{\alpha}, u^\alpha \Big )
 \end{align*}
for some normalizing constant $c_d>0$ and where the Euler's Gamma function $\Gamma$ is defined by
\begin{align*}
\Gamma(s,y)= \int_y^{\infty} z^{s-1} e^{-z} ds, \quad s,y \in \RR.
\end{align*}
\no We use the well-known asymptotic behaviour of the function $\Gamma$
\begin{align*}
\displaystyle \lim_{y \in \infty} \frac{\Gamma(s,y)}{y^{s-1}e^{-y}}=1
\end{align*}
and we conclude for some $C>0$ and every $u>0$ that
\begin{align} \label{eq: law one big jump}
\PP \Big ( |W_1| >u \Big ) \leq \frac{C}{\beta} u^{d - \alpha} e^{- \frac{u^\alpha}{2}}. 
\end{align}
\no Combining (\ref{eq: estimate big jumps}) and (\ref{eq: law one big jump}) yields for any $M>0$ that
\begin{align*}
\PP \Big ( J^\varepsilon_T > M \Big ) \leq \frac{C}{\beta} \sum_{n=1}^\infty \frac{1}{(n-1)!} \Big ( \frac{M}{\varepsilon} \Big )^{d - \alpha} e^{- \frac{1}{2} \Big ( \frac{M}{\varepsilon} \Big )^\alpha} e^{- \frac{\beta}{\varepsilon} T} \Big ( \frac{\beta T}{\varepsilon}\Big )^n.
\end{align*}
\no Therefore for every $\varepsilon>0$ we have at a logarithmic scale that
\begin{align} \label{eq: the LDP big jumps scale}
\varepsilon \ln \PP \Big ( J^\varepsilon_T > M  \Big ) \lesssim_\varepsilon - \frac{M^\alpha}{\varepsilon^{\alpha-1}} - \beta T.
\end{align}
\no We proceed now to examine the logarithmic deviations of the small jumps family $(I^\varepsilon_T)_{\varepsilon>0}$. For any $\lambda=\lambda(\varepsilon)>0$ such that $\lambda(\varepsilon) \varepsilon^2 \ra 0$ as $\varepsilon \ra 0$ we have , due to Campbell's inequality,  for any $\varepsilon>0$ and $M>0$ that
\begin{align} \label{eq: the small jumps estimate}
\PP \Big ( I^\varepsilon_T > M \Big ) & = \PP \Big ( e^{\lambda (I^\varepsilon_T)^2 } > e^{\lambda M^2} \Big ) \nonumber \\
& \leq e^{- \lambda M^2} \EE \ \Big [e^{\lambda (I^\varepsilon_T)^2 } \Big ] \nonumber \\
& \leq e^{- \lambda M^2} \exp \Big ( \frac{T}{\varepsilon} \int_{\RR^d \backslash \{0\}} (e^{\lambda \varepsilon^2 |z|^2} -1) \nu(dz) \Big ) \nonumber \\
& \leq e^{- \lambda M^2} \exp \Big ( \frac{T}{\varepsilon} \lambda \varepsilon^2 \int_{0 < |z|<1} |z|^2 \nu(dz)\Big ).
\end{align}
In the last estimate we used the fact that 
\begin{align*}
\frac{e^{\lambda \varepsilon^2 |z|^2}-1}{\lambda \varepsilon^2} \ra 1 
\end{align*}
since $\lambda \varepsilon^2 \ra 0$ as $\varepsilon \ra 0$.  We fix the parametrization $\lambda(\varepsilon)= \frac{1}{\varepsilon^{1+p}}$ for some $p \in (0,1)$ for this purpose.

\no The estimate (\ref{eq: the small jumps estimate}) yields for every $M>0$ that
\begin{align} \label{eq: the LDP small jumps scale}
\varepsilon \ln \PP \Big ( I^\varepsilon_T > M \Big ) \lesssim_{\varepsilon} - \lambda(\varepsilon) \varepsilon M^2 + \lambda(\varepsilon) \varepsilon^2 \ra 0 \text{ as } \varepsilon \ra 0
\end{align}
due to the choice of $\lambda=\lambda(\varepsilon)>0$ fixed above.

\no Hence combining (\ref{eq: the LDP big jumps scale}) and (\ref{eq: the small jumps estimate}) we conclude the desired limit (\ref{eq: exp tightness of Lepsilon}).
\end{proof}

\paragraph*{Acknowledgments.} The author acknowledge and thanks the financial support from the FAPESP grant number 2018/06531-1 at the University of Campinas (UNICAMP) SP-Brazil,  the FAPESP grant number 2019/21324-5 at ENSTA-ParisTech, Palaiseau-France and the FAPESP grant number 2020/04426-6.

\end{document}